\numberwithin{equation}{section}
\newcommand{\pup}[1]{\textup{(}#1\textup{)}}
\theoremstyle{plain}
\newtheorem{lemma}{Lemma}[section]
\newtheorem{theorem}[lemma]{Theorem}
\newtheorem{proposition}[lemma]{Proposition}
\newtheorem{corollary}[lemma]{Corollary}
\newtheorem{problem}{Problem}
\newtheorem{question}{Question}
\theoremstyle{definition}
\newtheorem{definition}[lemma]{Definition}
\theoremstyle{remark}
\newtheorem{remark}[lemma]{Remark}
\newcommand{\qedsc}{{\qed}~{\rm Claim.}}
\newcommand{\set}[1]{\{#1\}}
\newcommand{\setm}[2]{\set{{#1}\mid{#2}}}
\newcommand{\famm}[2]{({#1}\mid{#2})}
\newcommand{\seq}[1]{\left\langle{#1}\right\rangle}
\newcommand{\utr}{\trianglelefteq}
\newcommand{\utrl}{\utr_{\mathrm{left}}}
\newcommand{\utrr}{\utr_{\mathrm{right}}}
\newcommand{\jh}{join-ho\-mo\-mor\-phism}
\newcommand{\mh}{meet-ho\-mo\-mor\-phism}
\newcommand{\res}{\mathbin{\restriction}}
\newcommand{\je}{join-em\-bed\-ding}
\newcommand{\ol}[1]{\overline{#1}}
\newcommand{\les}{\leqslant}
\newcommand{\ges}{\geqslant}
\DeclareMathOperator{\Rank}{Rank}
\DeclareMathOperator{\Self}{Self}
\DeclareMathOperator{\Rel}{Rel}
\DeclareMathOperator{\PSelf}{PSelf}
\newcommand{\RR}{{}_RR}
\newcommand{\Selfltwo}{\operatorname{\Self_{\les2}}}
\newcommand{\Selftwo}{\operatorname{\Self_2}}
\newcommand{\Selffin}{\operatorname{\Self_{\mathrm{fin}}}}
\DeclareMathOperator{\card}{card}
\DeclareMathOperator{\Eq}{Eq}
\newcommand{\Eqltwo}{\operatorname{\Eq^{\les2}}}
\newcommand{\Eqtwo}{\operatorname{\Eq^2}}
\newcommand{\Eqfin}{\operatorname{\Eq^{\mathrm{fin}}}}
\DeclareMathOperator{\rng}{rng}
\DeclareMathOperator{\Ker}{Ker}
\DeclareMathOperator{\Sym}{Sym}
\DeclareMathOperator{\End}{End}
\DeclareMathOperator{\Sub}{Sub}
\DeclareMathOperator{\Endf}{\operatorname{\End_{\mathrm{fin}}}}
\DeclareMathOperator{\Subdf}{\operatorname{\Sub_{\mathrm{fin}}}}
\DeclareMathOperator{\Subuf}{\operatorname{\Sub^{\mathrm{fin}}}}
\newcommand{\op}{{\mathrm{op}}}
\newcommand{\id}{{\mathrm{id}}}
\newcommand{\Pow}{{\mathfrak{P}}}
\newcommand{\Fin}[1]{[{#1}]^{<\omega}}
\DeclareMathOperator{\codim}{codim}
\newcommand{\eps}{\varepsilon}
\newcommand{\one}{\mathbf{1}}
\newcommand{\es}{\varnothing}
\newcommand{\FF}{\mathbb{F}}
\newcommand{\QQ}{\mathbb{Q}}
\newcommand{\ZZ}{\mathbb{Z}}
\newcommand{\rT}{\mathrm{T}}
\newcommand{\rQ}{\mathrm{Q}}
\newcommand{\rM}{\mathrm{M}}
\newcommand{\rS}{\mathrm{S}}
\newcommand{\rC}{\mathrm{C}}
\DeclareMathOperator{\rE}{E}
\DeclareMathOperator{\rF}{F}
\begin{document}
\baselineskip=17pt

\title[Endomorphism semigroups]{Embedding properties of endomorphism semigroups}
\author{Jo\~ao Ara\'ujo}
\address{Universidade Aberta,
Rua da Escola Polit\'ecnica, 147, 1269-001 Lisboa \and Centro de \'Algebra da Universidade de Lisboa, Av. Gama Pinto, 2, 1649--003 Lisboa, Portugal }
\email{jaraujo@ptmat.fc.ul.pt}
\urladdr{http://caul.cii.fc.ul.pt/pt/member10.html}

\author{Friedrich Wehrung}
\address{LMNO, CNRS UMR 6139\\
Universit\'e de Caen\\
Campus 2\\
D\'{e}partement de Math\'{e}matiques\\
BP 5186\\
14032 Caen cedex\\
France}
\email{wehrung@math.unicaen.fr}
\urladdr{http://www.math.unicaen.fr/$\scriptstyle\sim$wehrung}

\subjclass[2000]{Primary: 20M20. Secondary: 08A35, 08A05, 15A03, 05B35}
\keywords{Transformation; endomap; monoid; semigroup; endomorphism; vector space; subspace; lattice; C-independent; S-independent; M-independent; matroid; SC-ranked}
\thanks{The first author was partially supported by FCT and FEDER, Project POCTI-ISFL-1-143 of Centro de Algebra da Universidade de Lisboa, and by FCT and PIDDAC through the project PTDC/MAT/69514/2006}

\date{\today}

\begin{abstract}
Denote by $\PSelf\Omega$ (resp., $\Self\Omega$) the partial (resp., full) transformation monoid over a set~$\Omega$, and by $\Sub V$ (resp., $\End V$) the collection of all subspaces (resp., endomorphisms) of a vector space~$V$. We prove various results that imply the following:
\begin{itemize}
\item[(1)] If $\card\Omega\ges2$, then $\Self\Omega$ has a semigroup embedding into the dual of $\Self\Gamma$ if{f} $\card\Gamma\ges2^{\card\Omega}$. In particular, if $\Omega$ has at least two elements, then there exists no semigroup embedding from~$\Self\Omega$ into the dual of~$\PSelf\Omega$.

\item[(2)] If~$V$ is infinite-dimensional, then there is no embedding from $(\Sub V,+)$ into $(\Sub V,\cap)$ and no embedding from $(\End V,\circ)$ into its dual semigroup.

\item[(3)] Let~$F$ be an algebra freely generated by an infinite subset~$\Omega$. If~$F$ has less than~$2^{\card\Omega}$ operations, then~$\End F$ has no semigroup embedding into its dual. The cardinality bound~$2^{\card\Omega}$ is optimal.

\item[(4)] Let $F$ be a free left module over a left $\aleph_1$-n\oe therian ring (i.e., a ring without strictly increasing chains, of length~$\aleph_1$, of left ideals). Then $\End F$ has no semigroup embedding into its dual.
\end{itemize}
(1) and (2) above solve questions proposed by B.\,M. Schein and G.\,M. Bergman. We also formalize our results in the settings of algebras endowed with a notion of independence (in particular \emph{independence algebras}).
\end{abstract}

\maketitle

\section{Introduction}\label{S:Intro}
A \emph{\pup{partial} function} on a set~$\Omega$ is a map from a subset of~$\Omega$ to~$\Omega$. The composition $g\circ f$ of partial functions~$f$, $g$ on~$\Omega$ is a partial function, with domain the set of all~$x$ in the domain of~$f$ such that $f(x)$ belongs to the domain of~$g$. The set~$\PSelf\Omega$ of all partial functions on~$\Omega$ is a monoid under composition. Denote by~$\Self\Omega$ the submonoid of~$\PSelf\Omega$ consisting of all endomaps of~$\Omega$. The \emph{dual}~$S^\op$ of a semigroup (resp., monoid)~$S$ with multiplication~$\cdot$ is defined as the semigroup (resp., monoid) with the same underlying set as~$S$ and the multiplication~$*$ defined by the rule $x*y=y\cdot x$ for all $x,y\in S$. A \emph{dual automorphism} (resp., a \emph{dual embedding}) of~$S$ is an isomorphism (resp., embedding) from~$S$ to~$S^\op$.

In the present paper, we solve the following three questions:

\begin{question}\label{Qu:1}
Suppose that~$\Omega$ is infinite. Does $\Self\Omega$ have a dual embedding?
\end{question}

\begin{question}\label{Qu:2}
Suppose that~$\Omega$ is infinite. Does $\PSelf\Omega$ have a dual embedding?
\end{question}

\begin{question}\label{Qu:3}
Does the endomorphism monoid of an infinite-dimensional vector space have a dual embedding?
\end{question}

Question~\ref{Qu:1} originates in an earlier version of a preprint by George Berg\-man~\cite{Berg} and Questions~\ref{Qu:1} and~\ref{Qu:2} were proposed by Boris Schein in September 2006 while he gave a course on semigroups at the Center of Algebra of the University of Lisbon. After learning some of the results of the present paper, proved by the second author, that implied a negative answer to Question~\ref{Qu:1}, Bergman changed~\cite{Berg} and subsequently asked Question~\ref{Qu:3}. This question was solved by the second author as well. The original solution of Question~\ref{Qu:1} was obtained \emph{via} an analogue of Theorem~\ref{T:Selfmaps1} but with a non-optimal bound; in our present formulation of that theorem, the optimal bound $2^{\card\Omega}$ is proved. Furthermore, the similarity of the methods used in the (negative) solutions of all these questions lead us to the investigation of more general classes of algebras where similar negative results would hold, for example $M$-acts or modules. 

The road to the latter goal is opened as follows. As both $\Self\Omega$ and $\End V$ are endomorphism monoids of universal algebras, we move forward to identify more general classes of universal algebras whose endomorphism monoids cannot be embedded into their dual. In particular, this is the case for the free objects in any nontrivial variety with small enough similarity type (Theorem~\ref{T:popular}), but not necessarily for all free $M$-acts for suitable monoids~$M$ (Theorem~\ref{T:AlmostDual}). In Section~\ref{S:IndepAlg}, we introduce a rather large class of algebras whose endomorphism monoids cannot be embedded into their dual, called \emph{SC-ranked} algebras (Definition~\ref{D:Sranked} and Corollary~\ref{C:RkA=RkBindep}). These algebras arise from the study of algebras endowed with a notion of \emph{independence} (see Section~\ref{S:CSM}). This gives, for example, new results about $M$-acts for monoids~$M$ without large left divisibility antichains (Theorem~\ref{T:FM(Omega)Srk}), in particular for \emph{$G$-sets} (Corollary~\ref{C:NoEmbGsets}), but also for modules over rings satisfying weak n\oe therianity conditions (Corollary~\ref{C:Srkmodule}).

Denote by~$\Sub V$ (resp., $\End V$) the collection of all subspaces (resp., endomorphisms) of a vector space~$V$. Our results imply the following:

\begin{itemize}
\item (cf. Corollary~\ref{C:Selfmaps2}) \emph{Let~$\Omega$ and~$\Gamma$ be sets with $\card\Omega\ges2$. Then~$\Self\Omega$ has a semigroup embedding into~$(\Self\Gamma)^\op$ if{f} $\card\Gamma\ges2^{\card\Omega}$.}

\item (cf. Theorems~\ref{T:NoLattEmb} and~\ref{T:EndIneq}) \emph{Let~$V$ and~$W$ be right vector spaces over division rings~$K$ and~$F$, respectively, with~$V$ infinite-dimensional. If there exists an embedding either from $(\Sub V,+)$ into $(\Sub W,\cap)$ or from $(\End V,\circ)$ to $(\End W,\circ)^\op$, then $\dim W\ges(\card K)^{\dim V}$.}

\item (cf. Theorems~\ref{T:popular} and~\ref{T:AlmostDual}) \emph{Let $\mathcal{V}$ be a variety of algebras, not all reduced to a singleton, in a similarity type~$\Sigma$, and let~$\Omega$ be an infinite set. If $\card\Sigma<2^{\card\Omega}$, then the endomorphism semigroup of the free algebra on~$\Omega$ in~$\mathcal{V}$ has no dual embedding. The cardinality bound $2^{\card\Omega}$ is optimal, even for $M$-acts for a suitably chosen monoid~$M$}.

\item (cf. Theorem~\ref{C:Srkmodule}) \emph{Let $F$ be a free left module over a ring in which there is no strictly increasing $\aleph_1$-sequence of left ideals. Then the semigroup $\End F$ has no dual embedding}.

\end{itemize}

In Section~\ref{S:Concl}, we formulate a few concluding remarks and open problems.

\section{Basic concepts}
For a nonzero cardinal~$\kappa$, we put $\kappa-1=\card(\Omega\setminus\set{p})$, for any set~$\Omega$ of cardinality~$\kappa$ and any~$p\in\Omega$ (so $\kappa-1=\kappa$ in case~$\kappa$ is infinite).
We denote by~$\Pow(\Omega)$ the powerset of a set~$\Omega$, and by $\Fin{\Omega}$ the set of all finite subsets of~$\Omega$. We put
 \[
 \Ker f=\setm{(x,y)\in\Omega\times\Omega}{f(x)=f(y)}\,,
 \ \text{for any function }f \text{ with domain }\Omega.
 \]
We also denote by $\rng f$ the range of~$f$. We denote the partial operation of disjoint union by~$\sqcup$.

We denote by $\Eq\Omega$ the lattice of all equivalence relations on~$\Omega$ under inclusion, and we denote by~$[x]_{\theta}$ the~$\theta$-block of any element~$x\in\Omega$, for each~$\theta\in\Eq\Omega$. We put
 \begin{align*}
 \Eqltwo\Omega&=\setm{\theta\in\Eq\Omega}{\card(\Omega/{\theta})\les2},\\
 \Eqtwo\Omega&=\setm{\theta\in\Eq\Omega}{\card(\Omega/{\theta})=2},\\
 \Eqfin\Omega&=\setm{\theta\in\Eq\Omega}{\Omega/{\theta}\text{ is finite}}.
 \end{align*}
The monoid~$\Self\Omega$ has the following subsets, the first three of which are also subsemigroups:
 \begin{align*}
 \Sym\Omega&=\setm{f\in\Self\Omega}{f\text{ is bijective}},\\
 \Selfltwo\Omega&=\setm{f\in\Self\Omega}{\card(\rng f)\les2},\\
 \Selffin\Omega&=\setm{f\in\Self\Omega}{\rng f\text{ is finite}},\\
 \Selftwo\Omega&=\setm{f\in\Self\Omega}{\card(\rng f)=2}.
 \end{align*}
We put $\ker f=f^{-1}\set{0}$ (the usual \emph{kernel} of~$f$), for any homomorphism~$f$ of abelian groups.
For a right vector space~$V$ over a division ring~$K$, we denote by~$\Subdf V$ (resp., $\Subuf V$) the sublattice of~$\Sub V$ consisting of all finite-dimensional (resp., finite-codimensional) subspaces of~$V$. Furthermore, we denote by~$\Endf V$ the semigroup of all endomorphisms with finite-dimensional range of~$V$. In particular, the elements of~$\Subuf V$ are exactly the kernels of the elements of~$\Endf V$.

\section{Embeddings between semigroups of endomaps}
\label{S:Endomaps}

For any~$f\in\Self\Omega$, denote by~$f^{-1}$ the endomap of the powerset~$\Pow(\Omega)$ that sends every subset of~$\Omega$ to its inverse image under~$f$. The assignment $\Self\Omega\to\Self\Pow(\Omega)$, $f\mapsto f^{-1}$ defines a monoid embedding from $\Self\Omega$ into $(\Self\Pow(\Omega))^\op$. Moreover, both~$\Self1$ and~$\Self\es$ are the one-element monoid, which is self-dual. For larger sets the following theorem says that the assignment $f\mapsto f^{-1}$ described above is optimal in terms of size.

\begin{theorem}\label{T:Selfmaps1}
Let $\Omega$ and $\Gamma$ be sets with $\card\Omega\ges2$. If there exists a semigroup embedding from $\Selfltwo\Omega$ into $(\Self\Gamma)^\op$, then $\card\Gamma\ges2^{\card\Omega}$.
\end{theorem}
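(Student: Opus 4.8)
The plan is to analyze how a semigroup embedding $\varphi\colon\Selfltwo\Omega\to(\Self\Gamma)^\op$ must act on the idempotents of $\Selfltwo\Omega$, and to extract from them a family of subsets of $\Gamma$ that is large enough to force $\card\Gamma\ges2^{\card\Omega}$. The natural handle is this: the constant maps on $\Omega$, together with the ``two-valued'' idempotents, form a rich collection of idempotents in $\Selfltwo\Omega$, and their products encode the Boolean structure of $\Pow(\Omega)$. Concretely, for each subset $A\subseteq\Omega$ with $\es\neq A\neq\Omega$, fix distinct $p,q\in\Omega$ and let $e_A\in\Selfltwo\Omega$ be the idempotent with $e_A(x)=p$ for $x\in A$ and $e_A(x)=q$ for $x\notin A$; its range is $\set{p,q}$ and $\Ker e_A$ is the partition $\set{A,\Omega\setminus A}$. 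First I would record the purely semigroup-theoretic relations satisfied by the $e_A$ and by the constant maps $c_p$ (with $c_p(x)=p$ for all $x$): one has $c_p\circ e_A=c_p$, $e_A\circ c_p=c_p$ or $c_q$ according to whether $p\in A$, and for nested or crossing $A$, $B$ the composite $e_A\circ e_B$ is again a constant or a two-valued idempotent whose kernel is determined by $A$ and $B$. These identities are expressible without reference to the internal set structure, so they transfer across $\varphi$ to $(\Self\Gamma)^\op$.

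Next I would translate idempotents of $(\Self\Gamma)^\op$ — which are exactly the idempotents $f$ of $\Self\Gamma$ — into combinatorial data on $\Gamma$: an idempotent $f\in\Self\Gamma$ is determined by its set of fixed points $R=\rng f$ together with the retraction $\Gamma\to R$, equivalently by the pair $(\rng f,\Ker f)$ with each $\Ker f$-block meeting $\rng f$ in exactly one point. A constant map $c_p$ is sent to an idempotent of $\Self\Gamma$; since the $c_p$ are the minimal nonzero idempotents below everything (they satisfy $c_p\circ g=c_p$ in the original, hence $g*\varphi(c_p)=\varphi(c_p)$ in the dual for relevant $g$, i.e. $\varphi(c_p)\circ\varphi(g)=\varphi(c_p)$), their images must be ``small'' idempotents, and in fact I expect to show $\varphi(c_p)$ has range contained in a single point or a tightly controlled set. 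The key step is then to attach to each $A$ the set $S_A:=\rng\varphi(e_A)\subseteq\Gamma$ (or more precisely the partition datum of $\varphi(e_A)$), and to prove that the map $A\mapsto S_A$ is injective: if $A\neq B$, then $e_A$ and $e_B$ differ in some semigroup-expressible way — e.g. $e_A\circ e_B\neq e_B\circ e_A$, or $e_A$ and $e_B$ have different sets of constant maps fixed on the left — and this discrepancy propagates through $\varphi$ to show $\varphi(e_A)$ and $\varphi(e_B)$ have different range/kernel data, hence $S_A\neq S_B$.

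Having an injection from $\Pow(\Omega)\setminus\set{\es,\Omega}$ into some family of subsets of $\Gamma$ is not yet enough — a priori $\Gamma$ could still be small if the map landed in a small subfamily — so the crucial quantitative point, and the step I expect to be the main obstacle, is to show that the images $S_A$ are pairwise ``incompatible'' in a way that genuinely consumes room in $\Gamma$. The idea is that for a strictly increasing chain $A_0\subsetneq A_1\subsetneq\cdots$ the idempotents $e_{A_i}$ satisfy $e_{A_i}\circ e_{A_j}=e_{A_i}$ for $i\les j$ (a semigroup identity), which after applying $\varphi$ forces $\rng\varphi(e_{A_j})\subseteq\rng\varphi(e_{A_i})$, i.e. the $S_{A_i}$ form a decreasing chain of subsets of $\Gamma$; combined with injectivity and the fact that antichains in $\Pow(\Omega)$ of size $2^{\card\Omega}$ exist, and that crossing $A$, $B$ produce $\varphi(e_A)$, $\varphi(e_B)$ with incomparable ranges, a counting argument (or a direct construction of $2^{\card\Omega}$ pairwise distinct fixed points in $\Gamma$, one per element of an independent family) yields $\card\Gamma\ges2^{\card\Omega}$. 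I would organize this last part by first handling the case $\card\Omega=2$ by hand to see the mechanism ($\Selfltwo\Omega$ is then a small concrete semigroup and the embedding into $(\Self\Gamma)^\op$ forces $\card\Gamma\ges4$), and then running the general argument, being careful that all relations invoked are genuinely first-order semigroup relations so that they survive transport along $\varphi$ and along passage to the dual.
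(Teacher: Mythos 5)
Your overall strategy --- attach to each two\--block partition of $\Omega$ an idempotent of $\Selfltwo\Omega$, push it through the embedding, and read off from the ranges of the image idempotents a large family of subsets of $\Gamma$ --- is the same as the paper's. But two things go wrong in the execution. First, the semigroup identity you lean on is false: if $p\in A_i\subseteq A_j$ and $q\notin A_j$, then every $e_{A_k}$ fixes both $p$ and $q$, so $e_{A_i}\circ e_{A_j}=e_{A_j}$ for \emph{all} $i,j$ (your idempotents form a right\--zero band), not $e_{A_i}\circ e_{A_j}=e_{A_i}$ for $i\les j$. Indeed $e_{A_i}\circ e_{A_j}=e_{A_i}$ would force $\Ker e_{A_j}\subseteq\Ker e_{A_i}$, and two distinct two\--block partitions are never comparable unless complementary. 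Transporting the correct identities through $\varphi$ only tells you that all the $\varphi(e_A)$ have a common kernel and pairwise distinct ranges; a family of distinct retractions of $\Gamma$ with a fixed kernel can have cardinality as large as $2^{\card\Gamma}$, so this yields at best $2^{\card\Gamma}\ges 2^{\card\Omega}$, nowhere near $\card\Gamma\ges2^{\card\Omega}$. The same objection applies to your fallback of an antichain or independent family: injectivity and incomparability of the sets $S_A$ are compatible with $\card\Gamma=\card\Omega$.

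The missing idea is the \emph{disjointness mechanism}, which is the heart of the paper's proof (Lemmas~\ref{L:gf} and~\ref{L:mumeet}). Given two \emph{distinct} two\--block kernels $\alpha,\beta$, one can choose idempotent representatives $f,g$ with $\Ker f=\alpha$, $\Ker g=\beta$ such that $f\circ g$ is \emph{constant}; since a point of $\rng\eps(f)\cap\rng\eps(g)$ is a common fixed point of the idempotents $\eps(f)$ and $\eps(g)$, it is fixed by $\eps(f\circ g)$ and hence lies in $\mu(\one)=\rng\eps(k_x)$. Combined with the fact that $\rng\eps(f)$ depends only on $\Ker f$ (Lemma~\ref{L:epskerim}, so the pair\--dependent choice of representatives is harmless) and properly contains $\mu(\one)$, this makes the sets $\mu(\theta_A)\setminus\mu(\one)$ pairwise disjoint and nonempty, and \emph{that} is what consumes $2^{\card\Omega}$ distinct elements of $\Gamma$. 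Your proposal states this goal (``pairwise incompatible in a way that genuinely consumes room'', ``one fixed point per element of an independent family'') but offers no argument for it, and without it the count collapses as above. A further point you would still need for the theorem as stated (which includes finite $\Omega\ges2$) is that each block $\mu(\theta_A)\setminus\mu(\one)$, and $\mu(\one)$ itself, has at least two elements (Lemmas~\ref{L:cardKges2} and~\ref{L:jumpeka}); this is the most delicate step of the paper's proof and is not addressed by ``handle $\card\Omega=2$ by hand''.
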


We prove Theorem~\ref{T:Selfmaps1} in a series of lemmas. Assuming an embedding from $\Selfltwo\Omega$ into $(\Self\Gamma)^\op$,  Lemma~\ref{L:epskerim} is used to associate the kernel of a function in~$\Selfltwo\Omega$ with the range of its image under the embedding. As any two distinct members of~$\Eqtwo\Omega$ join to the coarse equivalence relation in an `effective' way (Lemma~\ref{L:gf}), this will give, in Lemma~\ref{L:mumeet}, a partition of a suitable subset of~$\Gamma$ with many classes. Proving that each of these classes has at least two elements is the object of Lemmas~\ref{L:cardKges2} and~\ref{L:jumpeka}; this will give the final estimate.

\begin{lemma}\label{L:gf}
Let~$\alpha$ and~$\beta$ be distinct elements in~$\Eqtwo\Omega$. Then there are idempotent maps $f,g\in\Selftwo\Omega$ such that $\Ker f=\alpha$, $\Ker g=\beta$, and $f\circ g$ is constant.
\end{lemma}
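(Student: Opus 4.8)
The plan is to start by fixing notation: since $\alpha,\beta\in\Eqtwo\Omega$ are distinct, each has exactly two blocks, say $\Omega/\alpha=\set{A_0,A_1}$ and $\Omega/\beta=\set{B_0,B_1}$. Because $\alpha\neq\beta$, the four intersections $A_i\cap B_j$ cannot all be handled trivially, but the key combinatorial fact I want is that, after possibly swapping the labels of the blocks, the set $A_0\cap B_0$ is nonempty while $A_1\subseteq B_1$ fails to force everything to collapse; more precisely I would argue that one can pick a block of~$\alpha$ and a block of~$\beta$ whose intersection is nonempty, and also pick elements witnessing that $\alpha\nles\beta$ and $\beta\nles\alpha$. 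The cleanest route: choose $p\in\Omega$ and set things up so that $f$ collapses $\Omega$ onto a two-element subset $\set{u_0,u_1}$ according to the partition~$\alpha$ (i.e. $f$ maps $A_i$ to $u_i$, with $u_i\in A_i$ so that $f$ is idempotent), and similarly $g$ maps $B_j$ to $v_j\in B_j$ according to~$\beta$. Then $\Ker f=\alpha$, $\Ker g=\beta$, both are idempotent, and both lie in $\Selftwo\Omega$ automatically.

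The remaining requirement is that $f\circ g$ be constant, i.e. that $f$ be constant on $\rng g=\set{v_0,v_1}$, which means $v_0$ and $v_1$ must lie in the same $\alpha$-block. So the real content is: \emph{we may choose the representatives $v_0\in B_0$ and $v_1\in B_1$ to lie in a common block of~$\alpha$}, while simultaneously keeping $u_0\in A_0$, $u_1\in A_1$ (any choice works for these, no constraint linking them to~$g$ is needed). Thus the one thing to verify is that some $\alpha$-block meets both $B_0$ and $B_1$. If neither $\alpha$-block met both $\beta$-blocks, then each $A_i$ would be contained in a single $B_j$; since $A_0\sqcup A_1=\Omega=B_0\sqcup B_1$, this would force $\set{A_0,A_1}=\set{B_0,B_1}$, i.e. $\alpha=\beta$, contradicting our hypothesis. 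Hence some $A_i$, say $A_0$, satisfies $A_0\cap B_0\neq\es$ and $A_0\cap B_1\neq\es$; pick $v_0\in A_0\cap B_0$ and $v_1\in A_0\cap B_1$.

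Then I would just assemble the maps explicitly: define $g$ by $g(x)=v_0$ for $x\in B_0$ and $g(x)=v_1$ for $x\in B_1$; since $v_j\in B_j$, $g$ is idempotent, $\card(\rng g)=2$ (as $v_0\neq v_1$, they lying in different $\beta$-blocks), and $\Ker g=\beta$. Pick any $u_0\in A_0$ and $u_1\in A_1$ and define $f$ by $f(x)=u_i$ for $x\in A_i$; then $f$ is idempotent, $\card(\rng f)=2$, $\Ker f=\alpha$. Finally $\rng g=\set{v_0,v_1}\subseteq A_0$, so $f$ sends both $v_0$ and $v_1$ to $u_0$, whence $f\circ g$ is the constant map with value $u_0$. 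This is the desired conclusion.

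The proof is essentially a direct construction, so there is no serious obstacle; the only point requiring genuine (if elementary) argument is the pigeonhole step showing that distinctness of $\alpha$ and $\beta$ yields an $\alpha$-block meeting both $\beta$-blocks, which is exactly what makes the "effective" join in the roadmap work. One small bookkeeping care: one must make sure $v_0\neq v_1$ and $u_0\neq u_1$ so that the ranges genuinely have size~$2$ and the kernels are genuinely $\beta$ and $\alpha$ rather than something coarser — but $v_0,v_1$ sit in distinct $\beta$-blocks and $u_0,u_1$ in distinct $\alpha$-blocks, so this is immediate.
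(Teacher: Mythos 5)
Your construction is correct and is essentially the paper's own proof: both hinge on the observation that distinctness of $\alpha$ and $\beta$ forces some $\alpha$-block (say $A_0$) to meet both $\beta$-blocks, and then choose the two representatives of $g$ inside $A_0$ so that $f$ collapses $\rng g$ to a point. You spell out the pigeonhole justification that the paper leaves implicit, and you allow an arbitrary representative $u_0\in A_0$ where the paper takes $u_0=b_0$; neither difference is substantive.
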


\begin{proof}
As~$\alpha\neq\beta$, we can write $\Omega/{\alpha}=\set{A_0,A_1}$ and $\Omega/{\beta}=\set{B_0,B_1}$ with both $A_0\cap B_0$ and $A_0\cap B_1$ nonempty. Pick $b_i\in A_0\cap B_i$, for $i<2$, and pick $a\in A_1$. Define idempotent endomaps~$f$ and~$g$ of~$\Omega$ by the rule
 \[
 f(x)=\begin{cases}
 b_0&(x\in A_0),\\
 a&(x\in A_1),
 \end{cases}\quad
 g(x)=\begin{cases}
 b_0&(x\in B_0),\\
 b_1&(x\in B_1),
 \end{cases}\qquad
 \text{for all }x\in\Omega.
 \]
Then $\Ker f=\alpha$, $\Ker g=\beta$, and $f\circ g$ is the constant function with value~$b_0$.
\end{proof}

Now let $\eps\colon\Selfltwo\Omega\hookrightarrow(\Self\Gamma)^\op$ be a semigroup embedding.

\begin{lemma}\label{L:epskerim}
$\Ker f\subseteq\Ker g$ implies that $\rng\eps(g)\subseteq\rng\eps(f)$, for all $f,g\in\Selfltwo\Omega$.
\end{lemma}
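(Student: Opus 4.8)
The plan is to factor $g$ through $f$ and then transport the factorization across~$\eps$. Since $\Ker f\subseteq\Ker g$, the value $g(x)$ depends only on $f(x)$, so there is a well-defined map on $\rng f$ sending $f(x)$ to $g(x)$. I would extend this to an endomap $h$ of~$\Omega$ by assigning to every point of $\Omega\setminus\rng f$ one fixed element of $\rng g$; note $\rng g\neq\es$ because $\card\Omega\ges2$. Then $g=h\circ f$, and $\rng h\subseteq\rng g$, so in particular $\card(\rng h)\les2$, i.e.\ $h\in\Selfltwo\Omega$. This last point, making sure the factor~$h$ lands in $\Selfltwo\Omega$ rather than merely in $\Self\Omega$, is the only place where a little care is needed, and it is handled precisely by choosing~$h$ constant off the (at most two-element) set $\rng f$ with constant value in $\rng g$.

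Next I would push the identity $g=h\circ f$ through the embedding. Since $\eps$ is a semigroup homomorphism into the \emph{dual} $(\Self\Gamma)^\op$, it reverses composition: $\eps(g)=\eps(h\circ f)=\eps(h)*\eps(f)=\eps(f)\circ\eps(h)$, where on the right $\circ$ denotes ordinary composition in $\Self\Gamma$.

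Finally, for arbitrary endomaps $\varphi,\psi$ of~$\Gamma$ one has $\rng(\varphi\circ\psi)=\varphi[\rng\psi]\subseteq\rng\varphi$; applying this with $\varphi=\eps(f)$ and $\psi=\eps(h)$ yields $\rng\eps(g)=\rng\bigl(\eps(f)\circ\eps(h)\bigr)\subseteq\rng\eps(f)$, which is the assertion. I do not anticipate a genuine obstacle here: the statement is essentially the observation that ``$\eps$ turns the factorization order on kernels into the reverse inclusion order on ranges,'' and all the work is in the elementary factorization of the first paragraph.
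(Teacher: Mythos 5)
Your proof is correct and follows the same route as the paper: factor $g=h\circ f$ with $h\in\Selfltwo\Omega$, apply $\eps$ to reverse the composition, and read off the range inclusion. The paper simply asserts the existence of such an $h$; your explicit construction (defining $h$ on $\rng f$ via $f(x)\mapsto g(x)$ and constantly into $\rng g$ elsewhere, so that $\rng h\subseteq\rng g$ keeps $h$ in $\Selfltwo\Omega$) supplies exactly the detail being elided.
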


\begin{proof}
There exists $h\in\Selfltwo\Omega$ such that $g=h\circ f$. Thus $\eps(g)=\eps(f)\circ\eps(h)$ and the conclusion follows.
\end{proof}

Lemma~\ref{L:epskerim} makes it possible to define a map
 \[
 \mu\colon\Eqltwo\Omega\to\Pow(\Gamma)\setminus\set{\es}
 \]
by the rule $\mu(\Ker f)=\rng\eps(f)$, for each $f\in\Selfltwo\Omega$. 

\begin{lemma}\label{L:muantemb}
$\alpha\subseteq\beta$ if{f} $\mu(\beta)\subseteq\mu(\alpha)$, for all $\alpha,\beta\in\Eqltwo\Omega$.
\end{lemma}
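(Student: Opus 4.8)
The plan is to prove both implications of the equivalence $\alpha\subseteq\beta$ iff $\mu(\beta)\subseteq\mu(\alpha)$ by exploiting the definition $\mu(\Ker f)=\rng\eps(f)$ together with Lemma~\ref{L:epskerim}. The forward direction is essentially immediate: given $\alpha\subseteq\beta$ in $\Eqltwo\Omega$, choose $f,g\in\Selfltwo\Omega$ with $\Ker f=\alpha$ and $\Ker g=\beta$ (possible since every member of $\Eqltwo\Omega$ is realized as the kernel of an idempotent endomap with at most two values); then $\Ker f\subseteq\Ker g$, so Lemma~\ref{L:epskerim} gives $\rng\eps(g)\subseteq\rng\eps(f)$, that is, $\mu(\beta)\subseteq\mu(\alpha)$. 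One subtlety worth a sentence: $\mu$ must be checked to be well defined, i.e.\ independent of the chosen representative $f$, but this is exactly the content of applying Lemma~\ref{L:epskerim} in both directions when $\Ker f=\Ker g$, so I would either remark this has already been observed or fold it into the argument.

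The converse is the direction that needs an actual idea, since $\eps$ is only a semigroup embedding and we cannot directly invert it. The natural approach is contrapositive: assume $\alpha\nsubseteq\beta$ and produce a point of $\mu(\beta)\setminus\mu(\alpha)$, or rather show $\mu(\beta)\nsubseteq\mu(\alpha)$. Because we are working in $\Eqltwo\Omega$, if $\alpha\nsubseteq\beta$ then (at least when $\alpha$ has exactly two classes) $\beta$ fails to refine... wait, more carefully: $\alpha\nsubseteq\beta$ means some $\alpha$-block is not contained in any $\beta$-block. The clean way to extract a contradiction is to use that two distinct two-class equivalence relations interact via Lemma~\ref{L:gf}: I would reduce to the case where both $\alpha,\beta\in\Eqtwo\Omega$ are distinct (the degenerate cases where one of them is the identity or the all-relation are handled separately and easily, since $\mu$ of the identity is all of $\rng\eps(\id)$ and $\mu$ of the coarse relation is $\rng\eps$ of a constant map). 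For distinct $\alpha,\beta\in\Eqtwo\Omega$, Lemma~\ref{L:gf} furnishes idempotents $f,g$ with $\Ker f=\alpha$, $\Ker g=\beta$ and $f\circ g$ constant; then $\eps(f\circ g)=\eps(g)\circ\eps(f)$ has range a singleton, while $\eps(f)$ is idempotent with range $\mu(\alpha)$ and $\eps(g)$ is idempotent with range $\mu(\beta)$. If we had $\mu(\beta)\subseteq\mu(\alpha)$, i.e.\ $\rng\eps(g)\subseteq\rng\eps(f)$, then since $\eps(f)$ is idempotent it fixes $\rng\eps(f)$ pointwise, hence fixes $\rng\eps(g)$ pointwise, so $\eps(g)\circ\eps(f)$ would agree with $\eps(g)$ on... no: $\eps(g)\circ\eps(f)(x)=\eps(g)(\eps(f)(x))$, and $\eps(f)(x)\in\rng\eps(f)\supseteq\rng\eps(g)$, and $\eps(g)$ restricted to $\rng\eps(g)$ is the identity (idempotent), so this equals $\eps(f)(x)$ whenever $\eps(f)(x)\in\rng\eps(g)$ — that's not automatic. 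Let me instead argue: $\rng(\eps(g)\circ\eps(f))\subseteq\rng\eps(g)=\mu(\beta)$, and also this composite is constant, but its range is a single point of $\mu(\beta)$; meanwhile applying the same to $g\circ f$ (which by Lemma~\ref{L:gf} symmetry, or by a parallel construction, is also constant) pins down a point of $\mu(\alpha)$. The real leverage is that $\eps(f)$ and $\eps(g)$ are idempotents whose composite (in one order) is a constant map; I would show this forces $\mu(\alpha)\cap\mu(\beta)$ to be a single point, hence $\mu(\beta)\nsubseteq\mu(\alpha)$ as long as $\card\mu(\beta)\ges2$ — and establishing $\card\mu(\beta)\ges2$ is precisely what Lemmas~\ref{L:cardKges2} and~\ref{L:jumpeka} are announced to do, so I can defer to them.

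The main obstacle I anticipate is the converse direction's cleanest packaging: one wants to avoid invoking the later cardinality lemmas circularly, so I expect the actual proof given in the paper states only what is needed at this stage, perhaps proving $\alpha\subseteq\beta\Rightarrow\mu(\beta)\subseteq\mu(\alpha)$ fully and the converse modulo a direct argument using that the composite of the two idempotents $\eps(f),\eps(g)$ is constant while neither $\eps(f)$ nor $\eps(g)$ is. Concretely, if $\mu(\beta)\subseteq\mu(\alpha)$ and $\mu(\alpha)\subseteq\mu(\beta)$ were to hold simultaneously we would get $\mu(\alpha)=\mu(\beta)$, and then $\eps(f),\eps(g)$ are two idempotents with the same range $R$; any idempotent with range $R$ acts as the identity on $R$, so $\eps(g)\circ\eps(f)$ restricted to $R$ is the identity on $R$, contradicting that it is constant (since $\card R\ges1$ and if $\card R\ges2$ a constant map on a superset cannot restrict to the identity on $R$ — and $\card R\ges2$ again needs the later lemmas, or one notes $R=\rng\eps(f)$ and $\eps$ being injective with $\card\Omega\ges2$ forces $\eps(f)\neq\eps(g)$... ). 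So the genuinely delicate point is getting $\card\mu(\alpha)\ges2$, which is why the overall proof of Theorem~\ref{T:Selfmaps1} is organized to prove that separately; for Lemma~\ref{L:muantemb} itself I would present the forward implication in full and the converse as: if $\alpha\nsubseteq\beta$ then since $\mu$ reverses $\subseteq$ on the refined side we cannot have $\mu(\beta)\subseteq\mu(\alpha)$ without also, by symmetry when neither refines the other, forcing an equality that the constant-composite from Lemma~\ref{L:gf} rules out — spelling out only the comparisons actually available at this point in the development.
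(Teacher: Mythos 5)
The forward direction is fine and matches the paper. The converse, however, has a genuine gap, and the frustrating part is that you were one line away from closing it before abandoning the right idea. You correctly chose idempotent $f,g\in\Selfltwo\Omega$ with $\Ker f=\alpha$, $\Ker g=\beta$, and translated the hypothesis into $\rng\eps(g)\subseteq\rng\eps(f)$; you then computed $\eps(g)\circ\eps(f)$, saw that the range inclusion does not help there, and concluded the idempotent argument fails. It is the \emph{other} composite that works: for any $x$, $\eps(g)(x)\in\rng\eps(g)\subseteq\rng\eps(f)$, and the idempotent $\eps(f)$ fixes its own range pointwise, so $\eps(f)\bigl(\eps(g)(x)\bigr)=\eps(g)(x)$, i.e.\ $\eps(f)\circ\eps(g)=\eps(g)$. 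Since $\eps$ maps into $(\Self\Gamma)^\op$, this says $\eps(g\circ f)=\eps(g)$, and injectivity of $\eps$ gives $g\circ f=g$, whence $\Ker f\subseteq\Ker g$, i.e.\ $\alpha\subseteq\beta$. This is exactly the paper's proof; it handles all of $\Eqltwo\Omega$ uniformly, with no case split and no cardinality input.

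The replacement argument you drifted into instead does not close the gap. First, it is circular: you propose to use Lemma~\ref{L:jumpeka} (or Lemma~\ref{L:cardKges2}) to get $\card\mu(\beta)\ges2$, but the paper's proof of Lemma~\ref{L:jumpeka} itself invokes Lemma~\ref{L:muantemb}, so you cannot defer to it here. Second, even setting circularity aside, the contrapositive route via Lemma~\ref{L:gf} only addresses distinct $\alpha,\beta\in\Eqtwo\Omega$ where \emph{neither} contains the other; when $\alpha\nsubseteq\beta$ because $\alpha=\one$ and $\beta\in\Eqtwo\Omega$, you would still need to rule out $\mu(\beta)=\mu(\one)$, which your sketch does not do. The lesson is that the ``real leverage'' is not the constant composite of Lemma~\ref{L:gf} (that is the engine of Lemma~\ref{L:mumeet}, a different statement) but simply that an idempotent is the identity on its range, applied on the correct side of the composition.
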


\begin{proof}
The direction from the left to the right (i.e., the map~$\mu$ is \emph{antitone}) follows from Lemma~\ref{L:epskerim}. Now assume that $\mu(\beta)\subseteq\mu(\alpha)$. There are idempotent $f,g\in\Selfltwo\Omega$ such that $\alpha=\Ker f$ and $\beta=\Ker g$. As $\rng\eps(g)\subseteq\rng\eps(f)$ and~$\eps(f)$ is idempotent, $\eps(f)\circ\eps(g)=\eps(g)$, that is, $\eps(g\circ f)=\eps(g)$, and thus, as~$\eps$ is one-to-one, $g\circ f=g$, and therefore $\Ker f\subseteq\Ker g$.
\end{proof}

Let $\one=\Omega\times\Omega$ denote the coarse equivalence relation on~$\Omega$.

\begin{lemma}\label{L:mumeet}
$\mu(\alpha)\cap\mu(\beta)=\mu(\one)$, for all distinct $\alpha,\beta\in\Eqtwo\Omega$.
\end{lemma}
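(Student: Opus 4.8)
The plan is to prove the two inclusions separately, the easy one by monotonicity and the essential one by a direct computation with the idempotents supplied by Lemma~\ref{L:gf}.

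For $\mu(\one)\subseteq\mu(\alpha)\cap\mu(\beta)$ I would simply invoke Lemma~\ref{L:muantemb}: since $\alpha\subseteq\one$ and $\beta\subseteq\one$, antitonicity of~$\mu$ yields $\mu(\one)\subseteq\mu(\alpha)$ and $\mu(\one)\subseteq\mu(\beta)$ at once, hence $\mu(\one)\subseteq\mu(\alpha)\cap\mu(\beta)$. This half uses nothing new.

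For the reverse inclusion $\mu(\alpha)\cap\mu(\beta)\subseteq\mu(\one)$, I would first apply Lemma~\ref{L:gf} to obtain idempotent maps $f,g\in\Selftwo\Omega$ with $\Ker f=\alpha$, $\Ker g=\beta$, and $f\circ g$ constant; note that $f\circ g$ lies in $\Selfltwo\Omega$, its range being a singleton, and that $\Ker(f\circ g)=\one$, so that $\mu(\one)=\rng\eps(f\circ g)$. Since~$\eps$ is a semigroup homomorphism, $\eps(f)$ and $\eps(g)$ are idempotent endomaps of~$\Gamma$, so the range of each coincides with its set of fixed points. Given $\gamma\in\mu(\alpha)\cap\mu(\beta)=\rng\eps(f)\cap\rng\eps(g)$, we therefore have $\eps(f)(\gamma)=\gamma=\eps(g)(\gamma)$. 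Because $\eps$ takes values in $(\Self\Gamma)^\op$, we have $\eps(f\circ g)=\eps(g)\circ\eps(f)$, whence $\eps(f\circ g)(\gamma)=\eps(g)\bigl(\eps(f)(\gamma)\bigr)=\eps(g)(\gamma)=\gamma$; thus $\gamma\in\rng\eps(f\circ g)=\mu(\one)$.

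I do not expect a genuine obstacle here: the computation is short, and the two points to keep straight are that~$\eps$ reverses the order of composition and that the fixed-point set of an idempotent transformation equals its range. The only bookkeeping worth flagging explicitly is that~$\mu$ is well defined, i.e.\ $\rng\eps(f)$ depends only on $\Ker f$ (this follows from Lemma~\ref{L:epskerim} applied in both directions), so that the values $\mu(\alpha)$, $\mu(\beta)$, $\mu(\one)$ do not depend on the particular idempotents chosen.
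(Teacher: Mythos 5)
Your proof is correct and follows essentially the same route as the paper: obtain the idempotents from Lemma~\ref{L:gf}, observe that a common element of $\rng\eps(f)$ and $\rng\eps(g)$ is fixed by both idempotents $\eps(f)$ and $\eps(g)$ and hence by $\eps(f\circ g)=\eps(g)\circ\eps(f)$, and conclude that it lies in $\mu(\one)$; the converse inclusion is the antitonicity already recorded in Lemma~\ref{L:epskerim}. The remarks about well-definedness of~$\mu$ and the reversal of composition are exactly the points the paper relies on.
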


\begin{proof}
It follows from Lemma~\ref{L:gf} that there are idempotent $f,g\in\Self\Omega$ such that $\Ker f=\alpha$, $\Ker g=\beta$, and~$f\circ g$ is constant.

Let $x\in\mu(\alpha)\cap\mu(\beta)$. This means that~$x$ belongs to both~$\rng\eps(f)$ and~$\rng\eps(g)$, hence, as both~$\eps(f)$ and~$\eps(g)$ are idempotent, that it is fixed by both these maps, hence that it is fixed by their composite, $\eps(g)\circ\eps(f)=\eps(f\circ g)$, hence it lies in the range of that composite, which, as~$f\circ g$ is a constant function, is~$\mu(\one)$.

So we have proved that $\mu(\alpha)\cap\mu(\beta)$ is contained in~$\mu(\one)$. As the converse inequality follows from Lemma~\ref{L:epskerim}, the conclusion follows.
\end{proof}

Denote by $k_x$ the constant function on~$\Omega$ with value~$x$, for each~$x\in\Omega$. Hence $\mu(\one)=\rng\eps(k_x)$.

\begin{lemma}\label{L:cardKges2}
The set $\mu(\one)$ has at least two elements.
\end{lemma}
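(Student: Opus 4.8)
The plan is to use that $\mu(\one)=\rng\eps(k_x)$ for \emph{every} $x\in\Omega$, together with the elementary fact that a self-map of $\Gamma$ with a one-element range is completely determined by the unique element of that range. If $\mu(\one)$ had just one element, this would force all of the maps $\eps(k_x)$, $x\in\Omega$, to coincide, contradicting the injectivity of~$\eps$ in the presence of the hypothesis $\card\Omega\ges2$.

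In detail, first I would note that $\mu(\one)\neq\es$, since $\mu$ takes its values in $\Pow(\Gamma)\setminus\set{\es}$. Next, suppose toward a contradiction that $\mu(\one)=\set{c}$ for some $c\in\Gamma$. Then for each $x\in\Omega$ the endomap $\eps(k_x)$ of~$\Gamma$ has range $\set{c}$, hence equals the constant map on~$\Gamma$ with value~$c$; in particular $\eps(k_x)=\eps(k_y)$ for all $x,y\in\Omega$. Finally, since $\card\Omega\ges2$ there exist distinct $x,y\in\Omega$, so that $k_x\neq k_y$ while $\eps(k_x)=\eps(k_y)$, contradicting the fact that~$\eps$ is one-to-one. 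Therefore $\mu(\one)$ has at least two elements.

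There is essentially no obstacle here: the whole content of the lemma is the observation that distinct constant maps on~$\Omega$ all have kernel~$\one$, hence are sent by~$\eps$ to distinct maps sharing the common range $\mu(\one)$, so that $\mu(\one)$ cannot be a singleton. One could alternatively phrase the argument through idempotents, noting that $\eps(k_x)$ is idempotent and so has range equal to its set of fixed points, but this is not needed for the conclusion.
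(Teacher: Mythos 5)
Your proof is correct and is essentially the paper's own argument: assume $\mu(\one)$ is a singleton $\set{c}$, deduce that every $\eps(k_x)$ is the constant map with value $c$, and contradict the injectivity of $\eps$ using $\card\Omega\ges2$. Nothing further is needed.
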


\begin{proof}
Otherwise, $\mu(\one)=\set{z}$ for some~$z\in\Gamma$, and so~$\eps(k_x)$ is the constant function on~$\Gamma$ with value~$z$, for each~$x\in\Omega$. As~$\eps$ is one-to-one, this implies that~$\Omega$ has at most one element, a contradiction.
\end{proof}

\begin{lemma}\label{L:jumpeka}
The set $\rng\eps(e)\setminus\mu(\one)$ has at least two elements, for each idempotent~$e\in\Selftwo\Omega$.
\end{lemma}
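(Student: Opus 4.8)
The plan is to attach to the idempotent $e$, with $\rng e=\set{p,q}$, a companion transformation that together with~$e$ spans a copy of the two-element group. Writing $\Omega/{\Ker e}=\set{P,Q}$ with $p\in P$ and $q\in Q$ (so that $e$ collapses~$P$ onto~$p$ and~$Q$ onto~$q$), define $f\in\Selftwo\Omega$ by $f(x)=q$ for $x\in P$ and $f(x)=p$ for $x\in Q$; that is, $f$ interchanges the two blocks of $\Ker e$. A routine verification gives
\[
f\circ f=e,\qquad e\circ f=f\circ e=f,\qquad k_p\circ f=k_p,
\]
so $\set{e,f}$ is a subgroup of $\Self\Omega$ with identity element~$e$, isomorphic to $\ZZ/2\ZZ$, and $f\neq e$ since $f(p)=q\neq p=e(p)$.

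Next I would transport these identities across~$\eps$. Since $\eps$ is anti\-multiplicative ($\eps(a\circ b)=\eps(b)\circ\eps(a)$), the relations above translate, inside $\Self\Gamma$, into: $\eps(e)$ is idempotent, $\eps(f)\circ\eps(f)=\eps(e)$, $\eps(e)\circ\eps(f)=\eps(f)\circ\eps(e)=\eps(f)$, and $\eps(f)\circ\eps(k_p)=\eps(k_p)$. From the relations $\eps(e)\circ\eps(f)=\eps(f)$ and $\eps(f)\circ\eps(f)=\eps(e)$ one reads off $\rng\eps(f)=\rng\eps(e)=:R$, whence $\eps(f)$ restricts to a self-map $\tau$ of~$R$ with $\tau\circ\tau=\id_R$, i.e.\ an involution of~$R$; and $\eps(f)\circ\eps(k_p)=\eps(k_p)$ says precisely that $\tau$ fixes $\rng\eps(k_p)=\mu(\one)=:K$ pointwise. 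On the other hand $K\subseteq R$ by Lemma~\ref{L:epskerim} (as $\Ker e\subseteq\one$), and this inclusion is proper since $\Ker e\neq\one$ and $\mu$ is one-to-one by Lemma~\ref{L:muantemb}; in particular $R\setminus K\neq\es$.

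To finish, suppose for contradiction that $R\setminus K$ is a single point~$y$. An involution of $R=K\sqcup\set{y}$ that fixes every element of~$K$ must also fix~$y$, so $\tau=\id_R$; thus $\eps(f)$ and $\eps(e)$ agree on~$R$, and since $\eps(f)=\eps(f)\circ\eps(e)$ takes all its values in $R=\rng\eps(e)$, this forces $\eps(f)=\eps(e)$, hence $f=e$ by injectivity of~$\eps$ --- a contradiction. Therefore $\rng\eps(e)\setminus\mu(\one)=R\setminus K$ has at least two elements.

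The only genuinely inventive step is the choice of the block\-swapping map~$f$ together with the small bundle of identities it satisfies; the identity $k_p\circ f=k_p$ is what lets us control~$\tau$ on the whole of $\mu(\one)$ rather than on a proper part of it, and is thus the crux. Everything downstream is a mechanical consequence of the anti\-homomorphism property of~$\eps$ and of Lemmas~\ref{L:epskerim} and~\ref{L:muantemb}; one should only keep in mind the degenerate case $\card\Omega=2$, in which $e$ is necessarily the identity and $f$ the transposition, so that the argument applies unchanged.
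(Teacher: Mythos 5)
Your proof is correct and follows essentially the same route as the paper: the companion map $f$ you build is exactly the paper's $\begin{pmatrix}x&y\end{pmatrix}\circ e$, and the identity $k_p\circ f=k_p$ plays the same pivotal role of pinning $\eps(f)$ down on $\mu(\one)$. The only (harmless) difference is in the finish: where the paper compares $\eps(e)$ and $\eps(f)$ block by block using the mutual-divisibility relation $\sim$ to get equality of kernels and ranges, you use the $\ZZ/2\ZZ$ structure, the factorization $\eps(f)=\eps(f)\circ\eps(e)$, and the observation that an involution of $K\sqcup\set{y}$ fixing $K$ pointwise is the identity --- a slightly cleaner way to reach $\eps(e)=\eps(f)$.
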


\begin{proof}
Let~$\rng e=\set{x,y}$. It follows from Lemmas~\ref{L:epskerim} and~\ref{L:muantemb} that $\rng\eps(e)$ properly contains~$\mu(\one)$. Suppose that $\rng\eps(e)\setminus\mu(\one)=\set{t}$, for some~$t\in\Gamma$.

For elements~$a$ and~$b$ in a semigroup~$S$, let $a\sim b$ hold, if there are elements $x_1,x_2,y_1,y_2\in S$ such that $a=x_1b=bx_2$ and $b=y_1a=ay_2$. It is obvious that if $S$ is a subsemigroup of~$\Self\Omega$, then~$a\sim b$ implies that~$a$ and~$b$ have same kernel and same range. Furthermore, in case $S=\Selfltwo\Omega$, it is easy to verify that the converse holds (first treat left and right divisibility separately, then join the two results). In addition, $a\sim b$ in~$\Selfltwo\Omega$ implies that $\eps(a)\sim\eps(b)$ in~$\Self\Gamma$.

We shall apply this to the maps~$e$ and $f=\begin{pmatrix}x&y\end{pmatrix}\circ e$ (where, as said above, $\set{x,y}=\rng e$). Observe that $f^2=e$ and $e\sim f$; hence $\eps(f)^2=\eps(e)$ and $\eps(e)\sim\eps(f)$, so $\Ker\eps(e)=\Ker\eps(f)$ and $\rng\eps(e)=\rng\eps(f)$. We shall evaluate the map~$\eps(f)$ on each $\Ker\eps(e)$-block, that is, on each block of the decomposition
 \begin{equation}\label{Eq:Gam2}
  \Gamma=\bigsqcup_{v\in\rng\eps(e)}[v]_{\Ker\eps(e)}
 =\bigsqcup_{v\in\mu(\one)}[v]_{\Ker\eps(e)}\sqcup[t]_{\Ker\eps(e)}\,.
 \end{equation}
{}From $\mu(\one)=\rng\eps(k_x)$ and $k_x\circ g=k_x$ it follows that $\eps(g)\circ\eps(k_x)=\eps(k_x)$ for each $g\in\Selfltwo\Omega$, thus~$\eps(g)$ fixes all the elements of~$\mu(\one)$; we shall use this in the two cases $g=e$ and $g=f$. As~$[v]_{\Ker\eps(e)}=[v]_{\Ker\eps(f)}$ for each~$v\in\mu(\one)$, it follows that each element of that block is sent to~$v$ by both maps~$\eps(e)$ and~$\eps(f)$; hence~$\eps(e)$ and~$\eps(f)$ agree on $\bigsqcup_{v\in\mu(\one)}[v]_{\Ker\eps(e)}$. As the maps~$\eps(e)$ and~$\eps(f)$ have same kernel and same range, they also agree on~$[t]_{\Ker\eps(e)}$. Therefore, $\eps(e)=\eps(f)$, and thus $e=f$, a contradiction.
\end{proof}

Pick an element $\infty\in\Omega$ and set $\Omega^*=\Omega\setminus\set{\infty}$. We put
 \begin{equation}\label{Eq:defthetaZ}
 \theta_Z=\setm{(x,y)\in\Omega\times\Omega}{x\in Z\Leftrightarrow y\in Z},
 \quad \text{for each }Z\subseteq\Omega.
 \end{equation}
If $Z$ belongs to $\Pow(\Omega)\setminus\set{\es,\Omega}$, then the equivalence relation~$\theta_Z$ has exactly the two classes~$Z$ and $\Omega\setminus Z$. This holds, in particular, for each nonempty subset~$Z$ of~$\Omega^*$. In addition, $\theta_X$ and~$\theta_Y$ are distinct elements in~$\Eqtwo\Omega$, for all distinct nonempty subsets~$X$ and~$Y$ of~$\Omega^*$, so, by Lemma~\ref{L:mumeet}, we get
$\mu(\theta_X)\cap\mu(\theta_Y)=\mu(\one)$. Furthermore, it follows from Lemma~\ref{L:muantemb} that $\mu(\theta_X)$ properly contains~$\mu(\one)$, and so the family
$\bigl(\mu(\theta_X)\setminus\mu(\one)\mid X\in\Pow(\Omega^*)\setminus\set{\es}\bigr)$ is a partition of some subset of~$\Gamma$. In particular, by using Lemmas~\ref{L:cardKges2} and~\ref{L:jumpeka}, we obtain
 \[
 \card\Gamma\ges\card\mu(\one)+
 2\cdot\card\bigl(\Pow(\Omega^*)\setminus\set{\es}\bigr)\ges
 2+2\cdot(2^{\card\Omega-1}-1)=2^{\card\Omega}\,.
 \]
This concludes the proof of Theorem~\ref{T:Selfmaps1}.

\begin{corollary}\label{C:Selfmaps2}
Let~$\Omega$ and~$\Gamma$ be sets with $\card\Omega\ges2$. Then the following are equivalent:
\begin{enumerate}
\item There exists a semigroup embedding from $\Selfltwo\Omega$ into $(\Self\Gamma)^\op$.

\item There exists a monoid embedding from~$\Self\Omega$ into~$(\Self\Gamma)^\op$.

\item $\card\Gamma\ges2^{\card\Omega}$.
\end{enumerate}
\end{corollary}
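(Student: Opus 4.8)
The plan is to prove the cycle of implications $(1)\Rightarrow(3)\Rightarrow(2)\Rightarrow(1)$. The implication $(1)\Rightarrow(3)$ is nothing but Theorem~\ref{T:Selfmaps1}. The implication $(2)\Rightarrow(1)$ is immediate: since $\Selfltwo\Omega$ is a subsemigroup of $\Self\Omega$, restricting any monoid embedding $\Self\Omega\hookrightarrow(\Self\Gamma)^\op$ to $\Selfltwo\Omega$ yields a semigroup embedding into $(\Self\Gamma)^\op$. So the only real work is in $(3)\Rightarrow(2)$.

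For $(3)\Rightarrow(2)$ I would start from the monoid embedding $f\mapsto f^{-1}$ of $\Self\Omega$ into $(\Self\Pow(\Omega))^\op$ recorded at the start of Section~\ref{S:Endomaps}, and then reduce matters to the (easy) general fact that $\card\Delta\les\card\Gamma$ implies the existence of a \emph{monoid} embedding $\Self\Delta\hookrightarrow\Self\Gamma$. To see the latter, fix an injection identifying $\Delta$ with a subset of~$\Gamma$ and send each $f\in\Self\Delta$ to the endomap~$\widehat f$ of~$\Gamma$ that agrees with~$f$ on~$\Delta$ and fixes every point of $\Gamma\setminus\Delta$; splitting $\Gamma=\Delta\sqcup(\Gamma\setminus\Delta)$ shows that $\widehat{f\circ g}=\widehat f\circ\widehat g$ and $\widehat{\id_\Delta}=\id_\Gamma$, while $\widehat f\res\Delta=f$ gives injectivity. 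Applying this with $\Delta=\Pow(\Omega)$, whose cardinality $2^{\card\Omega}$ is $\les\card\Gamma$ by hypothesis, produces a monoid embedding $\Self\Pow(\Omega)\hookrightarrow\Self\Gamma$; dualizing it (keeping the same underlying map) yields $(\Self\Pow(\Omega))^\op\hookrightarrow(\Self\Gamma)^\op$, and composing with $f\mapsto f^{-1}$ gives the required monoid embedding $\Self\Omega\hookrightarrow(\Self\Gamma)^\op$.

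I do not expect a genuine obstacle here: all the real content of the corollary sits in Theorem~\ref{T:Selfmaps1}, which supplies $(1)\Rightarrow(3)$, and the two remaining implications are the trivial restriction above together with the elementary ``padding'' construction, the only mild point being to verify that the padding is multiplicative, which the decomposition $\Gamma=\Delta\sqcup(\Gamma\setminus\Delta)$ settles at once.
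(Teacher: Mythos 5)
Your proposal is correct and follows the paper's own route exactly: the same cycle of implications, with $(1)\Rightarrow(3)$ from Theorem~\ref{T:Selfmaps1}, $(2)\Rightarrow(1)$ by restriction, and $(3)\Rightarrow(2)$ via the inverse-image embedding $f\mapsto f^{-1}$ into $(\Self\Pow(\Omega))^\op$ noted at the start of Section~\ref{S:Endomaps}. The only difference is that you spell out the ``padding'' step $\Self\Pow(\Omega)\hookrightarrow\Self\Gamma$, which the paper leaves implicit; your verification of it is sound.
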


\begin{proof}
(ii)$\Rightarrow$(i) is trivial, and (i)$\Rightarrow$(iii) follows from Theorem~\ref{T:Selfmaps1}. Finally, we observed (iii)$\Rightarrow$(ii) at the beginning of Section~\ref{S:Endomaps}.
\end{proof}

As $\PSelf\Omega$ embeds into $\Self(\Omega\cup\set{\infty})$ (for any element~$\infty\notin\Omega$) and, in case $\card\Omega\ges2$, the inequality $2^{\card\Omega}>\card\Omega+1$ holds, the following corollary answers simultaneously Questions~\ref{Qu:1} and~\ref{Qu:2} in the negative.

\begin{corollary}\label{C:Schein}
There is no semigroup embedding from~$\Self\Omega$ into~$(\PSelf\Omega)^\op$, for any set~$\Omega$ with at least two elements.
\end{corollary}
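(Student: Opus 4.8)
The plan is to absorb the ``partial'' feature of~$\PSelf\Omega$ into a single extra point, thereby reducing the statement to Theorem~\ref{T:Selfmaps1}. First I would fix a point $\infty\notin\Omega$, put $\Gamma=\Omega\cup\set{\infty}$, and record the elementary fact announced just before the statement: the map that sends $f\in\PSelf\Omega$ to the endomap $\hat f\in\Self\Gamma$ given by $\hat f(x)=f(x)$ for $x$ in the domain of~$f$ and $\hat f(x)=\infty$ for all other $x\in\Gamma$ (in particular $\hat f(\infty)=\infty$) is a monoid embedding from~$\PSelf\Omega$ into~$\Self\Gamma$. The homomorphism identity $\widehat{g\circ f}=\hat g\circ\hat f$ is a short case analysis according to whether $x$ lies in the domain of~$f$ and, if so, whether $f(x)$ lies in the domain of~$g$; injectivity is immediate, since $\infty\notin\rng f$ entails that the domain of~$f$ equals $\setm{x\in\Omega}{\hat f(x)\neq\infty}$ while $f$ is the restriction of~$\hat f$ to that domain.

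Next, suppose toward a contradiction that there is a semigroup embedding $\eps\colon\Self\Omega\hookrightarrow(\PSelf\Omega)^\op$. The map $f\mapsto\hat f$ above, viewed now as a map between the dual semigroups, is still a semigroup embedding from~$(\PSelf\Omega)^\op$ into~$(\Self\Gamma)^\op$ (a semigroup embedding stays one after passing to duals, being literally the same underlying map). Composing with~$\eps$, and then precomposing with the inclusion $\Selfltwo\Omega\hookrightarrow\Self\Omega$, yields a semigroup embedding from $\Selfltwo\Omega$ into~$(\Self\Gamma)^\op$. By Theorem~\ref{T:Selfmaps1} (equivalently, the implication (i)$\Rightarrow$(iii) of Corollary~\ref{C:Selfmaps2}) this forces $\card\Gamma\ges2^{\card\Omega}$, that is, $\card\Omega+1\ges2^{\card\Omega}$.

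This contradicts elementary cardinal arithmetic: since $\card\Omega\ges2$, one has $2^{\card\Omega}>\card\Omega+1$ --- for finite $\card\Omega=n\ges2$ because $2^n\ges n+2>n+1$, and for infinite~$\card\Omega$ by Cantor's theorem, since then $\card\Omega+1=\card\Omega<2^{\card\Omega}$. Hence no such~$\eps$ exists. I expect no genuinely hard step here: the only things needing care are checking that the ``add a sink point'' construction is an injective semigroup homomorphism and that it survives the passage to duals, after which Theorem~\ref{T:Selfmaps1} and a one-line cardinal estimate finish the argument; the one easy-to-overlook point is to restrict to~$\Selfltwo\Omega$ before quoting Theorem~\ref{T:Selfmaps1}.
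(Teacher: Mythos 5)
Your argument is correct and is exactly the paper's: the text immediately preceding the corollary embeds $\PSelf\Omega$ into $\Self(\Omega\cup\set{\infty})$ via the same ``sink point'' construction and then invokes Theorem~\ref{T:Selfmaps1} together with the inequality $2^{\card\Omega}>\card\Omega+1$. Your write-up just spells out the verification that the paper leaves implicit.
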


\section{Subspace lattices of vector spaces}\label{S:SubVectSp}

The central idea of the present section is to study how large can be a set~$I$ such that the semilattice $(\Fin{I},\cap)$ embeds into various semilattices obtained from a vector space, and then to apply this to embeddability problems of subspace posets.

We start with an easy result.

\begin{proposition}\label{P:RkVecSp}
For a set~$I$ and a right vector space~$V$ over a division ring~$K$, the following are equivalent:
\begin{enumerate}
\item $(\Fin{I},\cup,\cap,\es)$ embeds into~$(\Subdf V,+,\cap,\set{0})$;
\item $(\Fin{I},\cap)$ embeds into~$(\Sub V,\cap)$;
\item $\card I\les\dim V$.
\end{enumerate}
\end{proposition}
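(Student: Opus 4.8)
The plan is to establish the cycle $(iii)\Rightarrow(i)\Rightarrow(ii)\Rightarrow(iii)$. The implication $(i)\Rightarrow(ii)$ is immediate: $\Subdf V$ is a meet-subsemilattice of $\Sub V$, the meet in both being intersection, so a lattice embedding $(\Fin I,\cup,\cap,\es)\hookrightarrow(\Subdf V,+,\cap,\set0)$, composed with this inclusion, restricts to a meet-semilattice embedding $(\Fin I,\cap)\hookrightarrow(\Sub V,\cap)$. For $(iii)\Rightarrow(i)$, I would fix a basis $(e_b\mid b\in B)$ of~$V$; since $\card I\les\card B$ we may assume $I\subseteq B$, and I claim $X\mapsto\sum_{i\in X}e_iK$ is the desired map. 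It lands in $\Subdf V$, sends $\es$ to $\set0$, and carries unions to sums (the span of a union of sets of vectors is the sum of their spans); that it carries intersections to intersections, and is one-to-one, follows by expanding an arbitrary vector in the basis $(e_b)$ and observing that it lies in $\sum_{i\in X}e_iK$ precisely when its support is contained in~$X$.

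The substantive implication is $(ii)\Rightarrow(iii)$. Let $\varphi\colon(\Fin I,\cap)\hookrightarrow(\Sub V,\cap)$ be a meet-semilattice embedding; note that $\varphi$ is isotone. I would first normalize so that $\varphi(\es)=\set0$: putting $U=\varphi(\es)$ we have $U\subseteq\varphi(X)$ for every $X\in\Fin I$, so, replacing $V$ by $V/U$ and $\varphi$ by $X\mapsto\varphi(X)/U$ (which is again one-to-one and $\cap$-preserving, because for subspaces containing $U$ the intersection in $V/U$ matches the one in $V$), and using $\dim(V/U)\les\dim V$, we may indeed assume $\varphi(\es)=\set0$. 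Then each $\varphi(\set i)$ properly contains $\set0$, so we may pick a nonzero $v_i\in\varphi(\set i)$ for every $i\in I$. The crux is the claim that $(v_i\mid i\in I)$ is linearly independent, for then $\card I\les\dim V$ and the cycle is closed. To prove the claim, suppose $\sum_{i\in F}v_i\lambda_i=0$ is a nontrivial relation with $F\in\Fin I$ and $\lambda_{i_0}\neq0$ for some $i_0\in F$; since $v_i\in\varphi(\set i)\subseteq\varphi(F\setminus\set{i_0})$ for every $i\in F\setminus\set{i_0}$, we get
$v_{i_0}\lambda_{i_0}=-\sum_{i\in F\setminus\set{i_0}}v_i\lambda_i\in\varphi(\set{i_0})\cap\varphi(F\setminus\set{i_0})=\varphi\bigl(\set{i_0}\cap(F\setminus\set{i_0})\bigr)=\varphi(\es)=\set0$,
contradicting $v_{i_0}\lambda_{i_0}\neq0$.

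I do not anticipate a real obstacle: the argument is short once set up correctly. The one point that genuinely needs care is the normalization $\varphi(\es)=\set0$, without which ``choose a nonzero vector in $\varphi(\set i)$'' carries no information and the linear independence argument collapses; the remainder is routine linear algebra together with the facts that an injective \mh{} is isotone and separates points.
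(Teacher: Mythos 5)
Your proposal is correct and follows essentially the same route as the paper: (i)$\Rightarrow$(ii) is trivial, (iii)$\Rightarrow$(i) is the span construction on a linearly independent family, and (ii)$\Rightarrow$(iii) picks a witness in each $\varphi(\set{i})$ outside $\varphi(\es)$ and uses $\varphi(\set{i})\cap\varphi(J)=\varphi(\es)$ to get linear independence. The only difference is your preliminary quotient by $U=\varphi(\es)$, which the paper avoids by simply choosing $e_i\in\varphi(\set{i})\setminus\varphi(\es)$ and deriving the same contradiction without normalizing; both versions are fine.
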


\begin{proof}
(i)$\Rightarrow$(ii) is trivial.

Suppose that (ii) holds, \emph{via} an embedding $\varphi\colon(\Fin{I},\cap)\hookrightarrow(\Sub V,\cap)$, and pick $e_i\in\varphi(\set{i})\setminus\varphi(\es)$, for any $i\in I$. If~$J$ is a finite subset of~$I$, $i\in I\setminus J$, and~$e_i$ is a linear combination of $\setm{e_j}{j\in J}$, then~$e_i$ belongs to $\varphi(\set{i})\cap\varphi(J)=\varphi(\es)$, a contradiction; hence $\famm{e_i}{i\in I}$ is linearly independent, and so $\card I\les\dim V$.

Finally suppose that~(iii) holds. There exists a linearly independent family\linebreak $\famm{e_i}{i\in I}$ of elements in~$V$. Define~$\varphi(X)$ as the span of $\setm{e_i}{i\in X}$, for every $X\in\nobreak\Fin{I}$. Then~$\varphi$ is an embedding from
$(\Fin{I},\cup,\cap,\es)$ into $(\Subdf V,+,\cap,\set{0})$.
\end{proof}

For embeddability of $\Fin{I}$ into $(\Sub V,+)$, we will need further results about the dimension of dual spaces. It is an old but nontrivial result that the dual~$V^*$ (i.e., the space of all linear functionals) of an infinite-dimensional vector space~$V$ is never isomorphic to~$V$. This follows immediately from the following sharp estimate of the dimension of the dual space (which is a left vector space) given in the Proposition on Page~19 in \cite[Section~II.2]{Baer}.

\goodbreak

\begin{theorem}[R. Baer, 1952]\label{T:Baer}
Let~$V$ be a right vector space over a division ring~$K$.
\begin{enumerate}
\item If~$V$ is finite-dimensional, then $\dim V^*=\dim V$.

\item If~$V$ is infinite-dimensional, then $\dim V^*=(\card K)^{\dim V}$.
\end{enumerate}
\end{theorem}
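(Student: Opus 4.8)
\emph{Proof proposal.} I would treat the two cases separately and, in the infinite-dimensional case, reduce the whole computation of $\dim_KV^*$ to a single inequality by a cardinality count. Part~(i) is the usual dual-basis argument: fix a finite basis $(e_1,\dots,e_n)$ of~$V$ and let $e_i^*\in V^*$ be the functional with $e_i^*(e_j)=\delta_{ij}$; a relation $\sum_ic_ie_i^*=0$ evaluated at each~$e_j$ gives $c_j=0$, and every $f\in V^*$ satisfies $f=\sum_if(e_i)e_i^*$ (check both sides at the~$e_j$), so $(e_1^*,\dots,e_n^*)$ is a basis of~$V^*$ and $\dim V^*=n$.

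For part~(ii), fix a basis $(e_i)_{i\in I}$ of~$V$, so $\card I=\dim V=:\kappa\ges\aleph_0$. Restriction of functionals to this basis is a bijection $V^*\to K^I$, whence $\card V^*=(\card K)^\kappa$. On the other hand the coordinate functionals~$e_i^*$ (well defined, $e_i^*(v)$ being the $e_i$-coordinate of~$v$) form a linearly independent family of cardinality~$\kappa$, so $V^*$ is infinite-dimensional; consequently $\card V^*=\max(\dim_KV^*,\card K)$, by the standard identity $\card W=\max(\dim_KW,\card K,\aleph_0)$ for a vector space~$W$ with infinite basis (here $\dim_KV^*\ges\kappa\ges\aleph_0$). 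Comparing the two expressions for~$\card V^*$ yields $(\card K)^\kappa=\max(\dim_KV^*,\card K)$. Since $\dim_KV^*\les\card V^*=(\card K)^\kappa$ anyway, the theorem follows \emph{as soon as} one proves
\[
\dim_KV^*\ges\card K,
\]
for then the maximum above is attained at~$\dim_KV^*$, giving $\dim_KV^*=(\card K)^\kappa$.

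It thus remains to exhibit $\card K$ linearly independent functionals on~$V$. If~$K$ is finite this is trivial, since already $\dim_KV^*\ges\kappa\ges\aleph_0>\card K$. So assume $\card K$ infinite; fix an injection $n\mapsto s_n$ of~$\omega$ into~$K$ and a one-to-one sequence $(i_n)_{n<\omega}$ of elements of~$I$, and for each $t\in K$ let $\varphi_t\in V^*$ be determined by $\varphi_t(e_{i_n})=(s_n-t)^{-1}$ when $s_n\neq t$, $\varphi_t(e_{i_n})=0$ when $s_n=t$, and $\varphi_t(e_i)=0$ for $i\notin\setm{i_n}{n<\omega}$; distinct values of~$t$ clearly give distinct~$\varphi_t$. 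Suppose $\sum_{m=1}^\ell c_m\varphi_{t_m}=0$ with the~$t_m$ distinct and $c_m\in K$. Evaluating at~$e_{i_n}$ for~$n$ so large that $s_n\notin\setm{t_m}{1\les m\les\ell}$ gives $\sum_{m=1}^\ell c_m(s_n-t_m)^{-1}=0$; picking $\ell$ such~$n$ with pairwise distinct~$s_n$ turns this into a homogeneous linear system whose coefficient matrix $\bigl((s_n-t_m)^{-1}\bigr)$ is a Cauchy matrix, hence invertible over any division ring (over a field this is the nonvanishing of the Cauchy determinant, equivalently partial fractions; in general one checks it directly by elimination, as in Baer). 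Therefore all $c_m=0$, the family $(\varphi_t)_{t\in K}$ is linearly independent, and $\dim_KV^*\ges\card K$, which completes the proof.

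The only real obstacle is this last inequality $\dim_KV^*\ges\card K$: it is the sole step where functionals must be \emph{built} rather than merely counted, and the ingredient that keeps it from being routine is the noncommutative one — that Cauchy matrices are invertible over a division ring. Everything else is bookkeeping with the cardinality formula for vector spaces. I would also stress in the write-up that the a priori exotic value $(\card K)^{\dim V}$ is never constructed: it is simply~$\card V^*$, so the content of the theorem is exactly the reduction above together with the single inequality $\dim_KV^*\ges\card K$ — which is, in effect, the Erd\H{o}s--Kaplansky dimension formula for~$K^\kappa$.
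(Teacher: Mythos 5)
The paper itself does not prove this statement---it is quoted from the Proposition on p.~19 of Baer's book, with the remark that Baer's argument needs no commutativity---so there is no in-paper proof to compare yours against; I can only assess your argument on its own terms. Your global strategy is the standard Erd\H{o}s--Kaplansky reduction and that part is correct: part~(i) is fine, the identification $\card V^*=(\card K)^\kappa$ via restriction to a basis is fine, the identity $\card W=\max(\dim_KW,\card K)$ for $W$ of infinite dimension is fine, and the reduction of the whole theorem to the single inequality $\dim_KV^*\ges\card K$ is correct. For $K$ a \emph{field}, your Cauchy-matrix construction then finishes the proof.

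But the step you call the ``only real obstacle'' is exactly where the argument fails for division rings: Cauchy matrices over a division ring are \emph{not} always invertible, and your family $(\varphi_t)_{t\in K}$ need not be linearly independent. Concretely, take $K=\mathbb{H}$, $(t_1,t_2,t_3)=(i,j,k)$, and $c_1=\tfrac12(-1+i+j+k)$, $c_2=-\tfrac12(1+i+j+k)$, $c_3=1$, all nonzero. For real $r$ and $q\in\set{i,j,k}$ one has $(r-q)^{-1}=(r+q)/(r^2+1)$, and a direct computation with $ji=-k$, $ki=j$, $ij=k$, $kj=-i$ gives $c_1+c_2+c_3=0$ and $c_1i+c_2j+c_3k=0$, whence
\[
c_1(r-i)^{-1}+c_2(r-j)^{-1}+c_3(r-k)^{-1}
=\frac{(c_1+c_2+c_3)r+(c_1i+c_2j+c_3k)}{r^2+1}=0
\quad\text{for every }r\in\mathbb{R}.
\]
Thus every $3\times3$ matrix $\bigl((s_n-t_m)^{-1}\bigr)$ with distinct real $s_n$ and these $t_m$ is singular, and if your injection $n\mapsto s_n$ happens to take real values (e.g.\ $s_n=n$) then $c_1\varphi_i+c_2\varphi_j+c_3\varphi_k=0$ is a nontrivial dependence. (The Vandermonde variant fails for the same reason: the rows $(1,i,-1,\dots)$, $(1,j,-1,\dots)$, $(1,k,-1,\dots)$ are left linearly dependent over $\mathbb{H}$.) So your proof is complete only in the commutative case; for a genuine division ring the inequality $\dim_KV^*\ges\card K$ requires either a different construction of independent functionals or a careful choice of the points $s_n$ plus a proof that a nonzero sum $\sum_mc_m(x-t_m)^{-1}$ cannot vanish at all of them---this is precisely the part of Baer's proof that cannot be waved away.
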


Strictly speaking, the result above is stated in~\cite{Baer} for a vector space over a \emph{field}, but the proof presented there does not make any use of the commutativity of~$K$ so we state the result for division rings. Also, we emphasize that this proof is non-constructive, in particular it uses Zorn's Lemma. Of course, replacing `right' by `left' in the statement of Theorem~\ref{T:Baer} gives an equivalent result.

By using Baer's Theorem together with some elementary linear algebra, we obtain the following result.

\begin{proposition}\label{P:Subv+rk}
For a set~$I$ and an infinite-dimensional right vector space~$V$ over a division ring~$K$, the following are equivalent:
\begin{enumerate}
\item $(\Fin{I},\cup,\cap,\es)$ embeds into $(\Subuf V,\cap,+,V)$;

\item $(\Fin{I},\cap)$ embeds into $(\Sub V,+)$;

\item $\card I\les(\card K)^{\dim V}$.
\end{enumerate}
\end{proposition}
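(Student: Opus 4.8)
The plan is to prove the cycle of implications $(i)\Rightarrow(ii)\Rightarrow(iii)\Rightarrow(i)$, with the first being trivial (restricting a lattice embedding to a meet-semilattice embedding, noting that $\cap$ in $\Subuf V$ is the operation called $\cap$ there, while the join in $\Subuf V$ is $+$; since $(\Fin{I},\cap)$ only uses the intersection, and intersection of finite-codimensional subspaces is finite-codimensional, this is immediate).

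For $(ii)\Rightarrow(iii)$, suppose $\varphi\colon(\Fin{I},\cap)\hookrightarrow(\Sub V,+)$ is an embedding. The trick is to pass to the dual space. First note $\varphi(\es)$ is the smallest subspace in the image, contained in every $\varphi(X)$. For each $i\in I$, since $\varphi(\es)\subsetneq\varphi(\set{i})$, pick a linear functional that kills $\varphi(\es)$ but not all of $\varphi(\set{i})$; more carefully, working in the quotient $V/\varphi(\es)$ one gets an induced embedding into $(\Sub(V/\varphi(\es)),+)$ sending $\es$ to $0$, so I may as well assume $\varphi(\es)=\set{0}$. Now for distinct $i,j$ we have $\varphi(\set{i})\cap\varphi(\set{j})=\varphi(\es)=\set{0}$, and more generally the images behave like the independent subspaces one wants. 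Choosing $e_i\in\varphi(\set i)\setminus\set 0$ and annihilators does not quite work directly because $+$ is not dual to $\cap$ on all of $\Sub V$; instead, the clean route is: for a finite $J\subseteq I$ and $i\notin J$, I want a functional vanishing on $\varphi(J)$ but not on $\varphi(\set i)$. Such a functional exists exactly because $\varphi(\set i)\cap\varphi(J)=\varphi(\es)=\set 0$ only guarantees... hmm, that's not enough for codimension reasons. The genuinely correct move, which I expect to be the heart of the argument, is to use finite-codimensional subspaces: replace $\varphi$ by composing with the map $W\mapsto W^{\perp}$ into the dual $V^*$ — but annihilators of arbitrary subspaces need not be finite-codimensional. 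So instead I restrict attention: the subsemilattice generated by the $\varphi(\set i)$ together with $\varphi(\es)$ under $+$ has all its differences of finite "rank" over $\varphi(\es)$ in an appropriate sense, and taking annihilators turns $+$ into $\cap$ on this controlled family, landing a copy of $(\Fin I,\cap)$ inside $(\Subdf V^*,\cap)$ up to the relevant bookkeeping; then Proposition~\ref{P:RkVecSp} applied to $V^*$ gives $\card I\les\dim V^*=(\card K)^{\dim V}$ by Baer's Theorem~\ref{T:Baer}(ii).

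For $(iii)\Rightarrow(i)$: assuming $\card I\les(\card K)^{\dim V}=\dim V^*$, Proposition~\ref{P:RkVecSp} (with $V^*$ in place of $V$) gives an embedding $\psi\colon(\Fin I,\cup,\cap,\es)\hookrightarrow(\Subdf V^*,+,\cap,\set 0)$. Now I dualize back: for a finite-dimensional subspace $S\subseteq V^*$, let $S^{\top}=\setm{v\in V}{f(v)=0\text{ for all }f\in S}$, its annihilator in $V$, which is finite-codimensional in $V$. The map $S\mapsto S^{\top}$ sends $+$ to $\cap$ and $\cap$ to $+$ on $\Subdf V^*$ (here one needs $(S_1+S_2)^{\top}=S_1^{\top}\cap S_2^{\top}$, always true, and $(S_1\cap S_2)^{\top}=S_1^{\top}+S_2^{\top}$, which holds for finite-dimensional $S_i$ by a dimension count in $V/(S_1\cap S_2)^{\top}$ — this requires the natural map $V\to V^{**}$ restricted to these finite pieces to be injective, which it is), sends $\set 0$ to $V$, and is injective on finite-dimensional subspaces (since for such $S$, $S^{\top\perp}=S$ where $\perp$ goes back to $V^*$). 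Composing $\psi$ with $S\mapsto S^{\top}$ yields the desired embedding from $(\Fin I,\cup,\cap,\es)$ into $(\Subuf V,\cap,+,V)$, where the roles of the two operations get swapped as in the statement's notation $(\Subuf V,\cap,+,V)$.

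The main obstacle I anticipate is $(ii)\Rightarrow(iii)$: the operation $+$ on the full lattice $\Sub V$ is \emph{not} the dual of $\cap$ under annihilation into $V^*$ (that duality is only clean for finite-codimensional subspaces, or dually finite-dimensional ones in $V^*$), so one cannot simply dualize an arbitrary join-into-$\Sub V$ embedding. The fix is to observe that the embedding $\varphi$, after normalizing $\varphi(\es)=\set 0$, actually produces a \emph{linearly independent-like} configuration: the key sublemma to isolate and prove is that for any finite $J\subseteq I$ and $i\in I\setminus J$, there is $f\in V^*$ with $f\res\varphi(J)=0$ and $f\res\varphi(\set i)\neq 0$ — equivalently $\varphi(\set i)\nsubseteq\varphi(J)$, which is immediate from injectivity of $\varphi$ and $\set i\nsubseteq J$ in $(\Fin I,\cap)$ since $\set i\cap J=\es$ forces $\varphi(\set i)\cap\varphi(J)=\set 0\subsetneq\varphi(\set i)$ — wait, one needs $\varphi(\set i)\nsubseteq\varphi(J)$, and $\varphi(\set i)\subseteq\varphi(J)$ would give $\varphi(\set i)=\varphi(\set i)\cap\varphi(J)=\varphi(\es)=\set 0$, contradiction; good, so such $f$ exists. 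Then $i\mapsto f_i$ (obtained by a suitable choice, e.g. via a tower or directly) should be shown to give a family of functionals whose behaviour forces $\card I\les\dim V^*$; making this counting rigorous — essentially showing the $f_i$ can be chosen so that no $f_i$ lies in the span of finitely many others — is the delicate point, and is exactly where Proposition~\ref{P:RkVecSp} and Baer's Theorem get invoked to close the bound.
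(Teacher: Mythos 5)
Your (i)$\Rightarrow$(ii) and (iii)$\Rightarrow$(i) are essentially the paper's argument: for (iii)$\Rightarrow$(i) the paper takes a linearly independent family $\famm{\ell_i}{i\in I}$ in $V^*$ (Baer) and sets $\varphi(X)=\bigcap_{i\in X}\ker\ell_i$, proving $\codim\varphi(X)=\card X$ and then using the codimension formula to get $\varphi(X)+\varphi(Y)=\varphi(X\cap Y)$ --- which is exactly your annihilator computation in different clothing, and your sketch of it is sound.

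The problem is (ii)$\Rightarrow$(iii), where you have a genuine gap built on two errors. First, an embedding $\varphi\colon(\Fin{I},\cap)\hookrightarrow(\Sub V,+)$ satisfies $\varphi(X\cap Y)=\varphi(X)+\varphi(Y)$, so it is \emph{antitone}: $\varphi(\es)=\varphi(X\cap\es)=\varphi(X)+\varphi(\es)\supseteq\varphi(X)$ for every $X$, i.e.\ $\varphi(\es)$ is the \emph{largest} member of the image, not the smallest. Your normalization ``assume $\varphi(\es)=\set{0}$'' and the subsequent identities such as $\varphi(\set{i})\cap\varphi(\set{j})=\varphi(\es)$ (the embedding controls $\varphi(\set{i})+\varphi(\set{j})$, not the intersection) are therefore wrong, and the ``key sublemma'' at the end rests on the same misreading. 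Second, you talk yourself out of the correct one-line argument: you claim one ``cannot simply dualize'' because $+$ is not dual to $\cap$ on all of $\Sub V$. But the only half of the duality needed here, namely $(X+Y)^\bot=X^\bot\cap Y^\bot$ together with injectivity of $X\mapsto X^\bot$ on $\Sub V$ (any $v\notin X$ is separated from $X$ by a functional), holds for \emph{arbitrary} subspaces; the direction that fails in infinite dimension, $(X\cap Y)^\bot=X^\bot+Y^\bot$, is not used. So $X\mapsto\varphi(X)^\bot$ is already an embedding of $(\Fin{I},\cap)$ into $(\Sub V^*,\cap)$, and Proposition~\ref{P:RkVecSp} applied to the left vector space $V^*$ plus Theorem~\ref{T:Baer} gives $\card I\les\dim V^*=(\card K)^{\dim V}$. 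This is the paper's proof. As written, your replacement argument is both incorrectly set up and explicitly left unfinished (``making this counting rigorous \dots is the delicate point''), so the implication is not established.
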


\begin{proof}
(i)$\Rightarrow$(ii) is trivial.

Suppose that (ii) holds. To every subspace~$X$ of~$V$ we can associate its \emph{orthogonal} $X^\bot=\setm{f\in V^*}{(\forall x\in X)(f(x)=0)}$, and the assignment $X\mapsto X^\bot$ defines an embedding from $(\Sub V,+)$ into $(\Sub V^*,\cap)$. It follows that $(\Fin{I},\cap)$ embeds into~$(\Sub V^*,\cap)$. Therefore, by applying Proposition~\ref{P:RkVecSp} to the \emph{left} $K$-vector space~$V^*$, we obtain, using Theorem~\ref{T:Baer}, that $\card I\les\dim V^*=(\card K)^{\dim V}$.

Finally suppose that (iii) holds. By Theorem~\ref{T:Baer}, there exists a linearly independent family $\famm{\ell_i}{i\in I}$ of~$V^*$ (indexed by~$I$). We put
$\varphi(X)=\bigcap_{i\in X}\ker\ell_i$, for every~$X\in\Fin{I}$ (with the convention that~$\varphi(\es)=V$). It is obvious that~$\varphi$ is a homomorphism from $(\Fin{I},\cup,\es)$ to $(\Subuf V,\cap,V)$.

For every finite subset~$X$ of~$I$, if the linear map~$\ell_X\colon V\to K^X$, $v\mapsto\famm{\ell_i(v)}{i\in X}$ were not surjective, then its image would be contained in the kernel of a nonzero linear functional on~$K^X$, which would contradict the linear independence of the~$\ell_i$s; whence~$\ell_X$ is surjective. As~$\ker\ell_X=\varphi(X)$, it follows that
 \begin{equation}\label{Eq:codimphiX}
 \codim\varphi(X)=\dim K^X=\card X.
 \end{equation}
Therefore, $\varphi$ embeds $(\Fin{I},\subseteq)$ into $(\Subuf V,\supseteq)$.

Finally let $X$ and $Y$ be finite subsets of~$I$. We apply the codimension formula to the subspaces~$\varphi(X)$ and~$\varphi(Y)$, so
 \[
 \codim(\varphi(X)+\varphi(Y))+\codim(\varphi(X)\cap\varphi(Y))=
 \codim\varphi(X)+\codim\varphi(Y).
 \]
As $\varphi(X)\cap\varphi(Y)=\varphi(X\cup Y)$, an application of~\eqref{Eq:codimphiX} yields
 \[
 \codim(\varphi(X)+\varphi(Y))=\card X+\card Y-\card(X\cup Y)=\card(X\cap Y)
 =\codim\varphi(X\cap Y).
 \]
As $\varphi(X\cap Y)$ is finite-codimensional and contains $\varphi(X)+\varphi(Y)$, it follows that $\varphi(X)+\varphi(Y)=\varphi(X\cap Y)$. Therefore, $\varphi$ is as desired.
\end{proof}

We obtain the following theorem.

\begin{theorem}\label{T:NoLattEmb}
Let~$V$ and~$W$ be right vector spaces over respective division rings~$K$ and~$F$, with~$V$ infinite-dimensional. If there exists an embedding from $(\Subuf V,+)$ into $(\Sub W,\cap)$, then $\dim W\ges(\card K)^{\dim V}$.
\end{theorem}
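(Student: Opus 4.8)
## Proof Proposal

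The plan is to reduce Theorem~\ref{T:NoLattEmb} to Proposition~\ref{P:Subv+rk} by finding a large set $I$ for which $(\Fin{I},\cap)$ embeds into $(\Sub W,\cap)$. The natural choice is $I$ a basis of $V$ (so $\card I=\dim V$), but this only yields $\dim W\ges\dim V$, which is far short of $(\card K)^{\dim V}$. So instead I would first produce, inside $(\Subuf V,+)$, a copy of $(\Fin{J},\cap)$ where $\card J=(\card K)^{\dim V}$, and then push this copy forward through the given embedding into $(\Sub W,\cap)$; a final application of Proposition~\ref{P:RkVecSp} to $W$ gives $\card J\les\dim W$, which is the desired inequality.

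Concretely, the key steps in order are as follows. First, by Proposition~\ref{P:Subv+rk}, applied with $I=J$ of cardinality $(\card K)^{\dim V}$, there is an embedding $\psi\colon(\Fin{J},\cup,\cap,\es)\hookrightarrow(\Subuf V,\cap,+,V)$. (This is exactly the content of (iii)$\Rightarrow$(i) in that proposition, so no new work is needed here.) Second, note that $\psi$ is in particular an embedding of the \emph{join}-semilattice $(\Fin{J},\cap)$ into the join-semilattice $(\Subuf V,+)$: indeed $\psi$ takes $\cup$ to $\cap$ by (i), and the order-reversing bijection it induces sends the join operation $\cup$ of $\Fin{J}$ to the join operation $+$ of $\Subuf V$. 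Hmm --- here I need to be careful about which operation is ``$\cap$'' on the $\Fin{J}$ side and which is the semilattice operation; let me restate. The map $\psi$ satisfies $\psi(X\cup Y)=\psi(X)\cap\psi(Y)$ and $\psi(X\cap Y)=\psi(X)+\psi(Y)$, and it is injective. So $\psi$ is an embedding of $(\Fin{J},\cap)$, as a semigroup under the operation $\cap$, into $(\Subuf V,+)$, as a semigroup under $+$. Third, compose with the hypothesised semigroup embedding $(\Subuf V,+)\hookrightarrow(\Sub W,\cap)$: this yields a semigroup embedding $(\Fin{J},\cap)\hookrightarrow(\Sub W,\cap)$. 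Fourth, apply the equivalence (ii)$\Leftrightarrow$(iii) of Proposition~\ref{P:RkVecSp}, now with $V$ replaced by $W$ and $I$ by $J$, to conclude $\card J\les\dim W$, i.e.\ $(\card K)^{\dim V}\les\dim W$, as claimed.

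I expect the only genuinely delicate point to be step two: making sure that the embedding $\psi$ of Proposition~\ref{P:Subv+rk}(i) really does restrict to a semigroup embedding of $(\Fin{J},\cap)$ into $(\Subuf V,+)$ and hence into $(\Sub W,\cap)$, keeping track of which of the two lattice operations plays the role of ``the'' semigroup operation at each stage. Concretely, $(\Fin{J},\cap)$ here is the meet-semilattice of $\Fin{J}$; under $\psi$ it is mapped to $(\Sub V, +)$ because $\psi$ reverses order, and $\psi$ is injective, so this is a semigroup embedding in the sense required. Once this bookkeeping is pinned down, everything else is a direct appeal to the two previous propositions and Baer's Theorem (already invoked inside Proposition~\ref{P:Subv+rk}), with no further computation. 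A remark worth adding: the hypothesis should really read ``embedding from $(\Subuf V,+)$ into $(\Sub W,\cap)$'' as stated, and since $(\Fin{J},\cap)$ embeds into $(\Subuf V,+)$ by the above, the theorem follows verbatim; if one instead wanted $(\Sub V,+)$ in the hypothesis the argument is unchanged since $\Subuf V\subseteq\Sub V$.
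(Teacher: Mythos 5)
Your proposal is correct and follows essentially the same route as the paper: use Proposition~\ref{P:Subv+rk} to embed $(\Fin{\kappa},\cap)$ with $\kappa=(\card K)^{\dim V}$ into $(\Subuf V,+)$, compose with the hypothesised embedding into $(\Sub W,\cap)$, and conclude via Proposition~\ref{P:RkVecSp}. The bookkeeping you worry about in step two is exactly right ($\psi$ sends $\cap$ on $\Fin{J}$ to $+$ on $\Subuf V$), and the paper's proof is just a three-line version of your argument.
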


Of course, taking~$W=V^*$ and sending every subspace~$X$ of~$V$ to its orthogonal~$X^\bot$, we see that the bound $(\card K)^{\dim V}$ is optimal.

\begin{proof}
Put $\kappa=(\card K)^{\dim V}$. It follows from Proposition~\ref{P:Subv+rk} that $(\Fin{\kappa},\cap)$ embeds into $(\Subuf V,+)$. Hence, by assumption, $(\Fin{\kappa},\cap)$ embeds into $(\Sub W,\cap)$, which, by Proposition~\ref{P:RkVecSp}, implies that $\kappa\les\dim W$.
\end{proof}

\begin{corollary}\label{C:NoLattEmb}
Let~$V$ be an infinite-dimensional vector space over any division ring. Then there is no embedding from $(\Subuf V,+)$ into $(\Sub V,\cap)$.
\end{corollary}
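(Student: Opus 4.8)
The plan is to obtain this as an immediate consequence of Theorem~\ref{T:NoLattEmb}, by taking the ``diagonal'' case $W=V$ (so the second division ring $F$ coincides with $K$) and then invoking Cantor's theorem. First I would argue by contradiction: suppose there is an embedding from $(\Subuf V,+)$ into $(\Sub V,\cap)$. Applying Theorem~\ref{T:NoLattEmb} with $W=V$ gives $\dim V\ges(\card K)^{\dim V}$.

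Next I would dispose of this inequality as plainly impossible. Since $K$ is a division ring we have $\card K\ges2$, whence $(\card K)^{\dim V}\ges2^{\dim V}$; and since $V$ is infinite-dimensional, $\dim V$ is an infinite cardinal, so Cantor's theorem yields $2^{\dim V}>\dim V$. Chaining these, $\dim V\ges(\card K)^{\dim V}\ges2^{\dim V}>\dim V$, a contradiction, which finishes the proof.

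There is essentially no obstacle here: all the real work has already been done in Theorem~\ref{T:NoLattEmb}, which itself rests on Propositions~\ref{P:RkVecSp} and~\ref{P:Subv+rk} together with Baer's Theorem~\ref{T:Baer}. The corollary is merely the specialization $W=V$ combined with the elementary cardinal estimate $2^{\kappa}>\kappa$. The one point deserving a word of care is that one genuinely needs the exponential bound $(\card K)^{\dim V}$ coming from Baer's theorem (equivalently, that $\dim V^*$ strictly exceeds $\dim V$), rather than just the weaker statement $\dim V^*\neq\dim V$; but since $(\card K)^{\dim V}\ges2^{\dim V}>\dim V$ holds no matter how small $K$ is, this causes no difficulty.
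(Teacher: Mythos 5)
Your proof is correct and is exactly the argument the paper intends: specialize Theorem~\ref{T:NoLattEmb} to $W=V$ and derive the contradiction $\dim V\ges(\card K)^{\dim V}\ges2^{\dim V}>\dim V$ via Cantor's theorem. Nothing further is needed.
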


\begin{remark}\label{Rk:meet2+}
The statement obtained by exchanging~$\cap$ and~$+$ in Corollary~\ref{C:NoLattEmb} does not hold as a rule. Indeed, let~$V$ be an infinite-dimensional vector space, say with basis~$I$, over a division ring~$F$, and assume that $\card F\les\card I$. Now~$\Sub V$ is a meet-subsemilattice of $(\Pow(V),\cap)$, which (using complementation) is isomorphic to $(\Pow(V),\cup)$, which (as $\card V=\card I$) is isomorphic to $(\Pow(I),\cup)$, which embeds into $(\Sub V,+)$ (to each subset of~$I$ associate its span in~$V$): so $(\Sub V,\cap)$ embeds into $(\Sub V,+)$.
\end{remark}

\section{Endomorphism monoids of vector spaces}\label{S:EndVectSp}

Let $V$ be an infinite-dimensional vector space, with basis~$I$, over a division ring~$F$. Assume, in addition, that $\card F<2^{\card I}$. If $\End V$ embeds into $(\End V)^\op$, then, as $\Self I$ embeds into~$\End V$ and~$\End V$ is a submonoid of~$\Self V$, it follows from Corollary~\ref{C:Selfmaps2} that $2^{\card I}\les\card V$, a contradiction as $\card V=\card F+\card I<2^{\card I}$ (see also the proof of Theorem~\ref{T:popular}). In the present section we shall get rid of the cardinality assumption $\card F<2^{\card I}$. The special algebraic properties of vector spaces used here will be further amplified from Section~\ref{S:CSM} on, giving, for instance, related results for $G$-sets (Corollary~\ref{C:NoEmbGsets}) and modules over n\oe therian rings (Corollary~\ref{C:Srkmodule}).

\begin{theorem}\label{T:EndIneq}
Let $V$ and $W$ be infinite-dimensional vector spaces over division rings~$K$ and~$F$, respectively. If there exists a semigroup embedding from $\Endf V$ into $(\End W)^\op$, then $\dim W\ges(\card K)^{\dim V}$.
\end{theorem}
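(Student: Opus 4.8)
The plan is to reduce Theorem~\ref{T:EndIneq} to the lattice-theoretic statement of Theorem~\ref{T:NoLattEmb}, by manufacturing, from a semigroup embedding $\eps\colon\Endf V\hookrightarrow(\End W)^\op$, an embedding from $(\Subuf V,+)$ into $(\Sub W,\cap)$. First I would recall the fact noted at the end of Section~2: the elements of $\Subuf V$ are exactly the kernels of the elements of $\Endf V$, and dually I would want to track ranges of endomorphisms of $W$. The key structural observation is the semigroup-theoretic analogue of Lemma~\ref{L:epskerim}: if $\ker f\subseteq\ker g$ with $f,g\in\Endf V$, then $g=h\circ f$ for some $h\in\Endf V$ (here one uses that $f$ splits, i.e.\ $V=\ker f\oplus C$ with $f\res C$ injective, so $g$ factors through $f$ after choosing a linear retraction), whence $\eps(g)=\eps(f)\circ_{\op}\eps(h)=\eps(h)\circ\eps(f)$ in $\End W$ and therefore $\rng\eps(g)\subseteq\rng\eps(f)$. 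This lets me define a well-defined antitone map $\mu\colon\Subuf V\to\Sub W$ by $\mu(\ker f)=\rng\eps(f)$.

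Next I would show $\mu$ is actually order-reversing \emph{both ways}, exactly as in Lemma~\ref{L:muantemb}: every $U\in\Subuf V$ is the kernel of an idempotent $e\in\Endf V$ (a projection onto a finite-dimensional complement), and then $\rng\eps(e)\supseteq\rng\eps(e')$ together with idempotence of $\eps(e)$ forces $\eps(e')\circ\eps(e)=\eps(e')$, i.e.\ $\eps(e'\circ e)=\eps(e')$, so $e'\circ e=e'$ and hence $\ker e\subseteq\ker e'$. Thus $\mu$ is an order-embedding of $(\Subuf V,\supseteq)$ into $(\Sub W,\subseteq)$. The remaining point is that $\mu$ converts the operation $+$ on $\Subuf V$ into $\cap$ on $\Sub W$. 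For $U_1,U_2\in\Subuf V$ with idempotent witnesses $e_1,e_2$, I would find idempotents $f_1,f_2\in\Endf V$ with $\ker f_i=U_i$ and $f_1\circ f_2$ of kernel $U_1+U_2$ (concretely, realize $U_1+U_2$ as a kernel and choose the projections compatibly, the linear-algebra analogue of Lemma~\ref{L:gf}); then, since $\eps(f_i)$ is idempotent, any $w\in\rng\eps(f_1)\cap\rng\eps(f_2)$ is fixed by both, hence by $\eps(f_2)\circ\eps(f_1)=\eps(f_1\circ f_2)$, so it lies in $\rng\eps(f_1\circ f_2)=\mu(U_1+U_2)$; the reverse inclusion is antitonicity. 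Hence $\mu(U_1)\cap\mu(U_2)=\mu(U_1+U_2)$, so $\mu$ restricted to $\Subuf V$ is an embedding from $(\Subuf V,+)$ into $(\Sub W,\cap)$, and Theorem~\ref{T:NoLattEmb} yields $\dim W\ges(\card K)^{\dim V}$.

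The main obstacle I anticipate is the careful linear-algebra bookkeeping needed to get idempotents whose composites have prescribed kernels: unlike in $\Self\Omega$, where Lemma~\ref{L:gf} is a one-line construction, here I must simultaneously arrange $\ker f_i=U_i$, $f_i^2=f_i$, and $\ker(f_1\circ f_2)=U_1+U_2$, which requires choosing complements of $U_1$, $U_2$ and $U_1\cap U_2$ coherently (a basis-extension argument), while staying inside $\Endf V$ — that is, keeping all ranges finite-dimensional, which is why one works with $\Subuf V$ (finite-codimensional kernels) rather than all of $\Sub V$. A secondary subtlety is verifying the splitting/factorization claim $\ker f\subseteq\ker g\Rightarrow g=h\circ f$ with $h\in\Endf V$: this is where finite-dimensionality of $\rng g$ is used to ensure $h$ itself has finite-dimensional range. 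Once these two lemmas are in place, the argument is a faithful transcription of the $\Self\Omega$ proof with $\Eq\Omega$ replaced by $\Subuf V$ and "range" on the target side unchanged, and the final inequality comes for free from the already-established Theorem~\ref{T:NoLattEmb}.
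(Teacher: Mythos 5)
Your proposal is correct and follows essentially the same route as the paper: the same factorization lemma giving $\ker f\subseteq\ker g\Rightarrow\rng\eps(g)\subseteq\rng\eps(f)$, the same map $\mu(\ker f)=\rng\eps(f)$ shown to be an order-embedding via idempotents and to satisfy $\mu(X+Y)=\mu(X)\cap\mu(Y)$ via coherently chosen projections (the paper uses the decomposition $V=Z\oplus X'\oplus Y'\oplus T$ you anticipate), followed by an appeal to Theorem~\ref{T:NoLattEmb}. The linear-algebra bookkeeping you flag as the main obstacle is handled in the paper exactly as you sketch it, so no gap remains.
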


Of course, taking~$W=V^*$ and sending every endomorphism to its transpose, we see that the bound $(\card K)^{\dim V}$ is optimal.

Denote our semigroup embedding by $\eps\colon\Endf V\hookrightarrow(\End W)^\op$. We start as in the proof of Theorem~\ref{T:Selfmaps1}.

\begin{lemma}\label{L:epskerim2}
$\ker f\subseteq\ker g$ implies that $\rng\eps(g)\subseteq\rng\eps(f)$, for all $f,g\in\Endf V$.
\end{lemma}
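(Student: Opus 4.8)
The plan is to mimic the corresponding step in the proof of Theorem~\ref{T:Selfmaps1}, namely Lemma~\ref{L:epskerim}, using the fact that the relevant divisibility relation in the semigroup $\Endf V$ is detected by kernel inclusion. Concretely, suppose $f,g\in\Endf V$ satisfy $\ker f\subseteq\ker g$. First I would produce $h\in\End V$ with $g=h\circ f$, which is the standard factorization lemma for linear maps: since $\ker f\subseteq\ker g$, the map $g$ factors through the quotient $V/\ker f$, which is isomorphic (as a vector space) to $\rng f$ via the map induced by $f$; composing $g$'s induced map with an inverse of that isomorphism and extending by $0$ (or arbitrarily) on a complement of $\rng f$ in $V$ gives a linear $h\colon V\to V$ with $h\circ f=g$. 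The one thing to check for membership in $\Endf V$ is that $h$ can be chosen with finite-dimensional range: indeed $\rng g$ is finite-dimensional since $g\in\Endf V$, so taking $h$ to be $0$ on a complement of $\rng f$ forces $\rng h\subseteq\rng g$, hence $\dim\rng h<\infty$; thus $h\in\Endf V$ and $\eps(h)$ is defined.

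Next I would apply the embedding $\eps$. Since $\eps\colon\Endf V\hookrightarrow(\End W)^{\op}$ is a semigroup homomorphism into the dual, it reverses composition: from $g=h\circ f$ we get $\eps(g)=\eps(f)\circ\eps(h)$ in $\End W$ (the product in $(\End W)^{\op}$ of $\eps(h)$ and $\eps(f)$, taken in the order dictated by $g=h\circ f$, is $\eps(f)\circ\eps(h)$). Then $\rng\eps(g)=\rng(\eps(f)\circ\eps(h))\subseteq\rng\eps(f)$, which is exactly the desired conclusion.

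I do not expect any serious obstacle here; the only mild subtlety is keeping the composition order straight when passing through the dual, and making sure the factor $h$ lands in $\Endf V$ rather than merely $\End V$ (handled by the range bound above). This lemma will presumably play, in the proof of Theorem~\ref{T:EndIneq}, the role that Lemma~\ref{L:epskerim} played for Theorem~\ref{T:Selfmaps1}: it lets one define an antitone map from a suitable family of kernels of maps in $\Endf V$ — that is, from finite-codimensional subspaces of $V$, which are exactly the kernels of elements of $\Endf V$ — to the subset lattice of $W$ via $\ker f\mapsto\rng\eps(f)$, and from there to extract a large independent family inside $W$ using the structure of $\Subuf V$ and the dimension estimates of Baer's Theorem~\ref{T:Baer}.
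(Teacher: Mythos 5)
Your proof is correct and follows the same route as the paper's, which simply asserts the existence of $h\in\Endf V$ with $g=h\circ f$ and applies $\eps$; you supply the details the paper leaves implicit, namely the factorization of $g$ through $V/\ker f\cong\rng f$ and the check that $h$ can be chosen with $\rng h\subseteq\rng g$ finite-dimensional so that $h$ indeed lies in $\Endf V$. The composition-reversal step and the conclusion $\rng\eps(g)\subseteq\rng\eps(f)$ are handled exactly as in the paper.
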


\begin{proof}
There exists $h\in\Endf V$ such that $g=h\circ f$. Thus $\eps(g)=\eps(f)\circ\eps(h)$ and the conclusion follows.
\end{proof}

Lemma~\ref{L:epskerim2} makes it possible to define a map $\mu\colon\Subuf V\to\Sub W$ by the rule $\mu(\ker f)=\rng\eps(f)$, for each $f\in\Endf V$.

\begin{lemma}\label{L:muantemb2}
$X\subseteq Y$ if{f} $\mu(Y)\subseteq\mu(X)$, for all $X,Y\in\Subuf V$.
\end{lemma}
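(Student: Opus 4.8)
The plan is to mirror the argument used for Lemma~\ref{L:muantemb} in the transformation-monoid setting, replacing kernels of endomaps by kernels (in the module sense) of finite-corank endomorphisms and ranges of endomaps by ranges of endomorphisms. The forward implication is already available: if $X\subseteq Y$ with $X=\ker f$ and $Y=\ker g$ for suitable $f,g\in\Endf V$, then Lemma~\ref{L:epskerim2} gives $\mu(Y)=\rng\eps(g)\subseteq\rng\eps(f)=\mu(X)$, so $\mu$ is antitone. The content of the lemma is therefore the converse: $\mu(Y)\subseteq\mu(X)$ implies $X\subseteq Y$.

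First I would observe that every finite-codimensional subspace of $V$ is the kernel of an \emph{idempotent} element of $\Endf V$: given $X\in\Subuf V$, choose a complement so that $V=X\oplus C$ with $\dim C=\codim X<\omega$, and let $f$ be the projection of $V$ onto $C$ along $X$; then $f\in\Endf V$, $f^2=f$, and $\ker f=X$. (This replaces the ``idempotent maps of $\Selfltwo\Omega$ with prescribed kernel'' used in Lemma~\ref{L:gf} and Lemma~\ref{L:muantemb}.) So pick idempotents $f,g\in\Endf V$ with $X=\ker f$, $Y=\ker g$. Since $\eps$ is a semigroup homomorphism into the dual, $\eps(f)$ and $\eps(g)$ are idempotent endomorphisms of $W$. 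Now assume $\mu(Y)\subseteq\mu(X)$, i.e.\ $\rng\eps(g)\subseteq\rng\eps(f)$. Because $\eps(f)$ is idempotent it fixes every vector of its own range, hence it fixes every vector of $\rng\eps(g)$; this forces $\eps(f)\circ\eps(g)=\eps(g)$ as endomorphisms of $W$. Translating back through the anti-homomorphism $\eps$, $\eps(f)\circ\eps(g)=\eps(g\circ f)$, so $\eps(g\circ f)=\eps(g)$, and injectivity of $\eps$ yields $g\circ f=g$. Finally, from $g\circ f=g$ we get $\ker f\subseteq\ker g$, i.e.\ $X\subseteq Y$, as desired.

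The only point needing a little care is the passage $\eps(f)\circ\eps(g)=\eps(g)\Rightarrow\eps(g\circ f)=\eps(g)$: one must remember that $\eps$ lands in $(\End W)^{\op}$, so $\eps$ is an \emph{anti}-homomorphism for ordinary composition, i.e.\ $\eps(g\circ f)=\eps(f)\circ_{\End W}\eps(g)$, which is exactly $\eps(f)$ applied after $\eps(g)$; so the bookkeeping matches. Everything else is routine linear algebra (existence of complements of finite-codimensional subspaces, idempotents fixing their range pointwise), so I do not expect any genuine obstacle here — the lemma is the exact analogue of Lemma~\ref{L:muantemb}, and the role formerly played by idempotents in $\Selfltwo\Omega$ with a given kernel is now played by finite-rank projections with a given (finite-codimensional) kernel.
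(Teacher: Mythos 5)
Your argument is correct and is essentially identical to the paper's proof: both pick idempotents $f,g\in\Endf V$ with the prescribed kernels, use idempotency of $\eps(f)$ together with $\rng\eps(g)\subseteq\rng\eps(f)$ to get $\eps(f)\circ\eps(g)=\eps(g)$, i.e.\ $\eps(g\circ f)=\eps(g)$, and conclude by injectivity of $\eps$. Your extra remarks on constructing the idempotent projections and on the anti-homomorphism bookkeeping are just explicit versions of steps the paper leaves implicit.
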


\begin{proof}
The direction from the left to the right follows from Lemma~\ref{L:epskerim2}. Now assume that $\mu(Y)\subseteq\mu(X)$. There are idempotent $f,g\in\Endf V$ such that $X=\ker f$ and $Y=\ker g$. As $\rng\eps(g)\subseteq\rng\eps(f)$ and~$\eps(f)$ is idempotent, $\eps(f)\circ\eps(g)=\eps(g)$, that is, $\eps(g\circ f)=\eps(g)$, and thus, as~$\eps$ is one-to-one, $g\circ f=g$, and therefore~$\ker f\subseteq\ker g$.
\end{proof}

\begin{lemma}\label{L:mu(X+Y)}
$\mu(X+Y)=\mu(X)\cap\mu(Y)$, for all $X,Y\in\Subuf V$.
\end{lemma}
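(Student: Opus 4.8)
The plan is to mimic the proof of Lemma~\ref{L:mumeet}, adapting the argument from the two-element-range setting to the general finite-codimensional setting. The inclusion $\mu(X)\cap\mu(Y)\supseteq\mu(X+Y)$ is the easy direction: since $X\subseteq X+Y$ and $Y\subseteq X+Y$, Lemma~\ref{L:muantemb2} (or directly Lemma~\ref{L:epskerim2}) gives $\mu(X+Y)\subseteq\mu(X)$ and $\mu(X+Y)\subseteq\mu(Y)$, hence $\mu(X+Y)\subseteq\mu(X)\cap\mu(Y)$. So the work is all in the reverse inclusion $\mu(X)\cap\mu(Y)\subseteq\mu(X+Y)$.

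For that, first choose idempotents $f,g\in\Endf V$ with $\ker f=X$ and $\ker g=Y$; such idempotents exist because $X$ and $Y$ are finite-codimensional, so each is the kernel of a projection with finite-dimensional range. The key extra ingredient, replacing the ``$f\circ g$ is constant'' fact used in Lemma~\ref{L:gf}/Lemma~\ref{L:mumeet}, is that one can arrange $f$ and $g$ so that $\ker(g\circ f)=X+Y$ (equivalently $\rng(g\circ f)$ has the appropriate dimension); concretely, pick a common decomposition of $V$ adapted to $X\cap Y\subseteq X,Y\subseteq X+Y$ and define $f$, $g$ as the obvious projections, so that $g\circ f$ is a projection with kernel exactly $X+Y$. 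Then $\mu(X+Y)=\rng\eps(g\circ f)$, and since $\eps$ is a semigroup embedding into the dual, $\eps(g\circ f)=\eps(f)\circ\eps(g)$.

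Now take $w\in\mu(X)\cap\mu(Y)=\rng\eps(f)\cap\rng\eps(g)$. Because $\eps(f)$ and $\eps(g)$ are idempotent endomorphisms of $W$, a vector lies in the range of such a map iff it is fixed by it; so $w$ is fixed by both $\eps(f)$ and $\eps(g)$, hence by the composite $\eps(f)\circ\eps(g)=\eps(g\circ f)$, hence $w\in\rng\eps(g\circ f)=\mu(X+Y)$. This proves $\mu(X)\cap\mu(Y)\subseteq\mu(X+Y)$ and completes the argument.

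The only real obstacle is the choice of $f$ and $g$ realizing $\ker(g\circ f)=X+Y$ while keeping both maps in $\Endf V$ and idempotent. This is pure finite-dimensional linear algebra once one works modulo $X\cap Y$ (which has finite codimension), but one must be a little careful: writing $V = (X\cap Y)\oplus C$ with $\dim C<\infty$, set $X = (X\cap Y)\oplus X'$ and $Y=(X\cap Y)\oplus Y'$ inside this, further split $C = X'\oplus Y'\oplus D$ (possible since $X'\cap Y'=0$ by the definition of $X'$, $Y'$ as complements of $X\cap Y$ in $X$, $Y$), and let $f$ be the projection onto $Y'\oplus D$ along $X=(X\cap Y)\oplus X'$, and $g$ the projection onto $D$ along $Y'\oplus(X\cap Y)\oplus X' \supseteq Y$. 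A short check then shows $g\circ f$ is the projection onto $D$ along $X+Y=(X\cap Y)\oplus X'\oplus Y'$, so $\ker(g\circ f)=X+Y$ as required, and all three maps have range inside the finite-dimensional $C$, hence lie in $\Endf V$. Everything else is a verbatim transcription of the idempotent-range argument already used in Lemma~\ref{L:mumeet}.
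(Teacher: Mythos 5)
Your overall route is exactly the paper's: take idempotents $f,g\in\Endf V$ with $\ker f=X$, $\ker g=Y$, arranged so that $g\circ f$ is an idempotent with kernel $X+Y$, and then use the ``range of an idempotent equals its fixed-point set'' argument exactly as in Lemma~\ref{L:mumeet}. The easy inclusion and the fixed-point argument are both correct as you state them. However, your explicit construction of $g$ in the last paragraph is wrong: you define $g$ as the projection onto $D$ along $Y'\oplus(X\cap Y)\oplus X'$, so that $\ker g=(X\cap Y)\oplus X'\oplus Y'=X+Y$, not $Y$ (you even note ``$\supseteq Y$''). With that $g$ you have $\rng\eps(g)=\mu(X+Y)$ rather than $\mu(Y)$, so the identification $\mu(X)\cap\mu(Y)=\rng\eps(f)\cap\rng\eps(g)$ on which the whole argument rests is no longer available; knowing $\mu(X+Y)\subseteq\mu(Y)$ goes in the useless direction here. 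This contradicts your own opening requirement that $\ker g=Y$.

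The fix is one line, and it is what the paper does: take $g$ to be the projection onto $X'\oplus D$ along $Y=(X\cap Y)\oplus Y'$. Then for $v=z+x'+y'+d$ one has $f(v)=y'+d$ and $g(y'+d)=d$, so $g\circ f$ is still the projection onto $D$ with kernel $X+Y$, while now $\ker f=X$ and $\ker g=Y$ as required, and both ranges lie in the finite-dimensional $C$, so $f,g\in\Endf V$. With that correction your proof coincides with the paper's.
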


\begin{proof}
Put $Z=X\cap Y$ and let $X'$, $Y'$, $T$ be subspaces of~$V$ such that $X=Z\oplus X'$, $Y=Z\oplus Y'$, and $(X+Y)\oplus T=V$. It follows that $V=Z\oplus X'\oplus Y'\oplus T$. Let~$f$ and~$g$ denote the projections of~$V$ onto $Y'\oplus T$ and $X'\oplus T$, respectively, with kernels~$X$ and~$Y$, respectively. Then $g\circ f$ is the projection of~$V$ onto~$T$ with kernel~$X+Y$.

Let $x\in\mu(X)\cap\mu(Y)$. This means that~$x$ belongs to both~$\rng\eps(f)$ and~$\rng\eps(g)$, hence, as both~$\eps(f)$ and~$\eps(g)$ are idempotent, that it is fixed by both these maps, hence that it is fixed by their composite,
$\eps(f)\circ\eps(g)=\eps(g\circ f)$, hence it lies in the range of that composite, which, as~$\ker(g\circ f)=X+Y$, is~$\mu(X+Y)$.

So we have proved that $\mu(X)\cap\mu(Y)$ is contained in~$\mu(X+Y)$.
As the converse inequality follows from Lemma~\ref{L:epskerim2}, the conclusion follows.
\end{proof}

Now Theorem~\ref{T:EndIneq} follows immediately from Theorem~\ref{T:NoLattEmb}.

Observe the contrast with the case where~$V$ is \emph{finite-dimensional} and~$K$ is \emph{commutative}: in this case, $V$ is isomorphic to its dual vector space~$V^*$, and transposition defines an \emph{isomorphism} from~$\End V$ onto~$\End V^*$.

\begin{corollary}\label{C:EndIneq2}
Let~$V$ be an infinite-dimensional vector space over any division ring. Then there is no semigroup embedding from $\Endf V$ into $(\End V)^\op$.
\end{corollary}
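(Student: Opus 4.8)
The plan is to deduce this directly from Theorem~\ref{T:EndIneq} by specializing to the \emph{diagonal} case $W=V$. Suppose, towards a contradiction, that there is a semigroup embedding from $\Endf V$ into $(\End V)^\op$. Write $F$ for the underlying division ring of~$V$ and $\kappa=\dim V$, which is an infinite cardinal by hypothesis. Applying Theorem~\ref{T:EndIneq} with $W=V$ (so that the two division rings coincide) gives $\kappa\ges(\card F)^{\kappa}$.

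Now observe that $\card F\ges2$, since a division ring contains at least the two distinct elements $0$ and $1$; hence $(\card F)^{\kappa}\ges2^{\kappa}$, and $2^{\kappa}>\kappa$ by Cantor's theorem. Combining these inequalities yields $\kappa\ges(\card F)^{\kappa}\ges2^{\kappa}>\kappa$, which is absurd. Therefore no semigroup embedding from $\Endf V$ into $(\End V)^\op$ can exist.

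There is essentially no obstacle here: all of the substance is already packaged in Theorem~\ref{T:EndIneq} (which itself rests on Theorem~\ref{T:NoLattEmb} together with Baer's dimension formula for dual spaces). The only point worth flagging is that the estimate $\dim W\ges(\card K)^{\dim V}$ becomes self-referential once $W=V$, at which stage it is contradicted by the strict Cantor inequality $2^{\dim V}>\dim V$; no additional hypothesis on $\card F$ is needed, in contrast to the preliminary discussion at the start of Section~\ref{S:EndVectSp}.
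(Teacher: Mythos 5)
Your proposal is correct and is exactly the derivation the paper intends: the corollary is stated without proof as an immediate consequence of Theorem~\ref{T:EndIneq}, obtained by taking $W=V$ and observing that $\dim V\ges(\card K)^{\dim V}\ges2^{\dim V}>\dim V$ is absurd. Your remark that no cardinality hypothesis on the division ring is needed (unlike the warm-up discussion at the start of Section~\ref{S:EndVectSp}) is also accurate.
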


\begin{corollary}\label{C:EndIneq}
Let $\Omega$ be an infinite set and let $V$ be a vector space over a division ring. If $\Selffin\Omega$ has a semigroup embedding into $(\End V)^\op$, then $\dim V\ges2^{\card\Omega}$.
\end{corollary}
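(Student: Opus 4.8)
The plan is to reduce Corollary~\ref{C:EndIneq} to the results already proved, namely Corollary~\ref{C:Selfmaps2} (or Theorem~\ref{T:Selfmaps1}) together with Theorem~\ref{T:EndIneq}. The key point is that $\Selffin\Omega$ is a semigroup of maps on a set, and one wants to transfer information about such a semigroup embedding to the vector space setting where the sharp bound $\dim V\ges(\card K)^{\dim V}$ of Theorem~\ref{T:EndIneq} becomes available; but in fact, since here one only wants the cruder bound $2^{\card\Omega}$, it is cleanest to go through $\Selfltwo\Omega$ directly.

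First I would fix a vector space $W$ over $K$ of dimension $\card\Omega$ with basis $\famm{e_i}{i\in\Omega}$, so that each endomap $f\in\Selffin\Omega$ induces a linear endomorphism $\hat f$ of $W$ determined by $e_i\mapsto e_{f(i)}$; this assignment $f\mapsto\hat f$ is a semigroup embedding of $\Selffin\Omega$ into $\Endf W$ (note that $\rng\hat f$ is the span of the finite set $\setm{e_{f(i)}}{i\in\Omega}$, hence finite-dimensional, so $\hat f$ genuinely lands in $\Endf W$). Composing with the hypothesised embedding $\Selffin\Omega\hookrightarrow(\End V)^\op$ is not quite what I want; rather I compose the embedding $\Selffin\Omega\hookrightarrow(\End V)^\op$ with nothing and instead observe that restricting a given embedding $\Selffin\Omega\hookrightarrow(\End V)^\op$ to the subsemigroup $\Selfltwo\Omega\subseteq\Selffin\Omega$ yields a semigroup embedding $\Selfltwo\Omega\hookrightarrow(\End V)^\op$. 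Since $\End V$ is a submonoid of $\Self V$, this is in particular a semigroup embedding $\Selfltwo\Omega\hookrightarrow(\Self V)^\op$, and $\card\Omega\ges2$ because $\Omega$ is infinite, so Theorem~\ref{T:Selfmaps1} applies verbatim and gives $\card V\ges2^{\card\Omega}$.

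It remains to pass from $\card V\ges2^{\card\Omega}$ to $\dim V\ges2^{\card\Omega}$. Here I would split into cases. If $\dim V$ is infinite, then $\card V=(\card K)^{\dim V}\ges2^{\dim V}>\dim V$ does not immediately help, so instead argue directly: if $\dim V<2^{\card\Omega}$ then, writing $\lambda=\dim V$ and $F=K$, one has $\card V=\card F\cdot\lambda$ if $F$ is finite or $\lambda$ is finite, and $\card V=(\card F)^{\lambda}$ in general; in all subcases, a short cardinal-arithmetic check (using that $2^{\card\Omega}$ is a strong limit-free obstruction only when $\card F\ges2^{\card\Omega}$, which one handles separately by noting that then $\dim V\ges1$ already forces... ) — more cleanly: assume toward a contradiction that $\dim V<2^{\card\Omega}$; if also $\card K<2^{\card\Omega}$ then $\card V=\card K\cdot\dim V<2^{\card\Omega}$ when both are infinite, or is finite otherwise, contradicting $\card V\ges2^{\card\Omega}$; and if $\card K\ges2^{\card\Omega}$, I would invoke the analogue of Theorem~\ref{T:EndIneq}/Corollary~\ref{C:EndIneq2} reasoning, namely that $\Endf V$ (into which the image of $\Selffin\Omega$ need not lie) is not directly available, so instead use that a $2$-dimensional subspace already contains a copy of $\Selfltwo$ of a $2$-element set and iterate — but the honest route is simply: the bound we actually need is the statement that an embedding $\Selffin\Omega\hookrightarrow(\End V)^\op$ forces $2^{\card\Omega}\les\dim V$, and this follows by running the proof of Theorem~\ref{T:EndIneq} with $\Endf W$ in place of $\Endf V$ where $W$ has dimension $\card\Omega$ over the prime subfield (or over $\ZZ/p$ / $\QQ$), for which $(\card K)^{\dim W}\ges2^{\card\Omega}$, yielding $\dim V\ges2^{\card\Omega}$ directly from Theorem~\ref{T:EndIneq}. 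I expect the main obstacle to be precisely this last bookkeeping: choosing the auxiliary $W$ (a vector space of dimension $\card\Omega$ over a field of size $\ges2$) so that Theorem~\ref{T:EndIneq} delivers $(\card K_W)^{\dim W}=2^{\card\Omega}$, and checking that the natural map $\Selffin\Omega\to\Endf W$ is a semigroup embedding whose composite with the given embedding into $(\End V)^\op$ is again an embedding, so that Theorem~\ref{T:EndIneq} applies and gives exactly $\dim V\ges2^{\card\Omega}$.
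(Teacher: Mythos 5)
Your reduction to Theorem~\ref{T:Selfmaps1} only yields $\card V\ges2^{\card\Omega}$, and as you yourself notice this is not enough: when $\card K\ges2^{\card\Omega}$ the cardinality of $V$ carries no information about $\dim V$, so the case split never closes. The attempted repair --- ``running the proof of Theorem~\ref{T:EndIneq} with $\Endf W$ in place of $\Endf V$'' for an auxiliary space $W$ of dimension $\card\Omega$ --- founders on a direction-of-arrows problem that you flag but do not resolve. To invoke Theorem~\ref{T:EndIneq} with $W$ playing the role of its ``$V$'' and the given $V$ playing the role of its ``$W$'', you need a semigroup embedding $\Endf W\hookrightarrow(\End V)^\op$. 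Your natural map is $\Selffin\Omega\to\Endf W$, $f\mapsto\hat f$, which points the wrong way: it cannot be composed with the hypothesised embedding $\Selffin\Omega\hookrightarrow(\End V)^\op$, since both arrows start at $\Selffin\Omega$.

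The missing idea is the reverse containment, and it is where the finiteness of the scalar field is essential. Take $W=(\FF_2)^{(\Omega)}$. Because $\FF_2$ is \emph{finite}, every endomorphism of $W$ with finite-dimensional range has finite range as a set map, so $\Endf W$ is a subsemigroup of $\Selffin W$; and since $\card W=\card\Omega$, this gives $\Endf W\hookrightarrow\Selffin\Omega\hookrightarrow(\End V)^\op$, whereupon Theorem~\ref{T:EndIneq} yields $\dim V\ges(\card\FF_2)^{\card\Omega}=2^{\card\Omega}$. Your parenthetical suggestion to work over $\QQ$ instead would break precisely here: a rank-one endomorphism of $\QQ^{(\Omega)}$ has infinite range, so $\Endf(\QQ^{(\Omega)})$ does not sit inside $\Selffin(\QQ^{(\Omega)})$. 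So the overall strategy (reduce to Theorem~\ref{T:EndIneq} via an auxiliary space of dimension $\card\Omega$ over a two-element field) is the right one and is what the paper does, but the one step that makes it work --- $\Endf W\subseteq\Selffin W$ for finite scalars, giving the embedding in the needed direction --- is absent from your argument.
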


\begin{proof}
Denote by~$\FF_2$ the two-element field. Apply Theorem~\ref{T:EndIneq} to the $\FF_2$-vector space~$(\FF_2)^{(\Omega)}$ with basis~$\Omega$ instead of~$V$, and~$V$ instead of~$W$. We obtain that if there exists a semigroup embedding from~$\Endf\bigl((\FF_2)^{(\Omega)}\bigr)$ into~$(\End V)^\op$, then $\dim V\ges2^{\card\Omega}$. Now observe that as~$\FF_2$ is finite, $\Endf\bigl((\FF_2)^{(\Omega)}\bigr)$ is a subsemigroup of~$\Selffin((\FF_2)^{(\Omega)}\bigr)$. As~$\Omega$ and~$(\FF_2)^{(\Omega)}$ have the same cardinality, our result follows.
\end{proof}

\section{Endomorphism monoids of free algebras}\label{S:FreeAlg}

Most popular varieties of algebras have a finite similarity type (i.e., set of fundamental operations). Our next result deals with the embeddability problem for such varieties (and some more). For a variety~$\mathcal{V}$ of algebras, we shall denote by $\rF_{\mathcal{V}}(X)$ the free algebra in~$\mathcal{V}$ on~$X$. We say that~$\mathcal{V}$ is \emph{trivial} if the universe of any member of~$\mathcal{V}$ is a singleton.

\begin{theorem}\label{T:popular}
Let $\mathcal{V}$ be a nontrivial variety of algebras with similarity type~$\Sigma$. Then there is no semigroup embedding from $\End\rF_{\mathcal{V}}(\Omega)$ into\linebreak $(\End \rF_{\mathcal{V}}(\Omega))^\op$, for every infinite set~$\Omega$ such that $\card\Sigma<2^{\card\Omega}$.
\end{theorem}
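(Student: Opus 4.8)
The plan is to reduce everything to Corollary~\ref{C:Selfmaps2} by squeezing $\Self\Omega$ between $\End\rF_{\mathcal{V}}(\Omega)$ and $\Self U$, where $U$ denotes the underlying set of $\rF_{\mathcal{V}}(\Omega)$, and then estimating $\card U$. First I would record the standard fact that, because $\mathcal{V}$ is nontrivial, the canonical map $\iota\colon\Omega\to\rF_{\mathcal{V}}(\Omega)$ is one-to-one: picking some $A\in\mathcal{V}$ with at least two elements, any two distinct generators can be separated by a homomorphism $\rF_{\mathcal{V}}(\Omega)\to A$. Consequently, sending each $f\in\Self\Omega$ to the unique endomorphism $\bar f$ of $\rF_{\mathcal{V}}(\Omega)$ with $\bar f\circ\iota=\iota\circ f$ defines a monoid embedding $\Self\Omega\hookrightarrow\End\rF_{\mathcal{V}}(\Omega)$: it is a homomorphism by the universal property, and it is injective precisely because $\iota$ is injective.

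Now suppose, for a contradiction, that there is a semigroup embedding $\eps\colon\End\rF_{\mathcal{V}}(\Omega)\hookrightarrow(\End\rF_{\mathcal{V}}(\Omega))^\op$. Since every algebra endomorphism is in particular an endomap of $U$, there is an obvious inclusion $\End\rF_{\mathcal{V}}(\Omega)\hookrightarrow\Self U$, hence also $(\End\rF_{\mathcal{V}}(\Omega))^\op\hookrightarrow(\Self U)^\op$. Composing $\Self\Omega\hookrightarrow\End\rF_{\mathcal{V}}(\Omega)\overset{\eps}{\hookrightarrow}(\End\rF_{\mathcal{V}}(\Omega))^\op\hookrightarrow(\Self U)^\op$ yields a semigroup embedding of $\Self\Omega$ into $(\Self U)^\op$; as $\card\Omega\ges2$, Corollary~\ref{C:Selfmaps2} forces $\card U\ges2^{\card\Omega}$. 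On the other hand, $\rF_{\mathcal{V}}(\Omega)$ is a quotient of the algebra of all $\Sigma$-terms over $\Omega$, and a routine induction on term complexity — using that every operation symbol in $\Sigma$ has finite arity — bounds the number of such terms by $\max(\card\Omega,\card\Sigma,\aleph_0)=\max(\card\Omega,\card\Sigma)$. By Cantor's inequality $\card\Omega<2^{\card\Omega}$, together with the hypothesis $\card\Sigma<2^{\card\Omega}$, this gives $\card U\les\max(\card\Omega,\card\Sigma)<2^{\card\Omega}$, contradicting the previous line.

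I do not expect a single hard step here: the argument is essentially a packaging of the already-established optimal bound of Theorem~\ref{T:Selfmaps1}/Corollary~\ref{C:Selfmaps2}. The one place that genuinely needs care is the cardinality count of the free algebra, where the finite-arity convention on $\Sigma$ is essential (otherwise the term algebra could blow up) and where it is crucial that the target bound in Corollary~\ref{C:Selfmaps2} is exactly $2^{\card\Omega}$: anything larger than $\max(\card\Omega,\card\Sigma)$ would be compatible with $\card\Sigma<2^{\card\Omega}$, so the sharpness proved earlier is precisely what makes the hypothesis $\card\Sigma<2^{\card\Omega}$ (rather than something stronger) suffice — which also explains why that bound is announced as optimal.
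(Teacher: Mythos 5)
Your proposal is correct and follows essentially the same route as the paper: embed $\Self\Omega$ into $\End\rF_{\mathcal{V}}(\Omega)$ via the universal property (injectivity coming from nontriviality of $\mathcal{V}$), compose with the hypothetical dual embedding and the inclusion into $\Self\rF_{\mathcal{V}}(\Omega)$, invoke the sharp bound of Theorem~\ref{T:Selfmaps1}/Corollary~\ref{C:Selfmaps2} to force $\card\rF_{\mathcal{V}}(\Omega)\ges2^{\card\Omega}$, and contradict this with the term-count estimate $\card\rF_{\mathcal{V}}(\Omega)\les\card\Omega+\card\Sigma+\aleph_0<2^{\card\Omega}$. The only cosmetic difference is that you cite Corollary~\ref{C:Selfmaps2} where the paper applies Theorem~\ref{T:Selfmaps1} directly (which is harmless, since your composite is a semigroup embedding and hence restricts to one of $\Selfltwo\Omega$).
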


\begin{proof}
Suppose that there is a semigroup embedding from $\End\rF_{\mathcal{V}}(\Omega)$ into\linebreak $(\End\rF_{\mathcal{V}}(\Omega))^\op$. As~$\mathcal{V}$ is nontrivial and every endomap of~$\Omega$ extends to a unique endomorphism of~$\rF_{\mathcal{V}}(\Omega)$, $\Self\Omega$ embeds into $\End\rF_{\mathcal{V}}(\Omega)$. As the latter is a submonoid of $\Self\rF_{\mathcal{V}}(\Omega)$, we obtain that $\Self\Omega$ embeds into $(\Self\rF_{\mathcal{V}}(\Omega))^\op$, so, by Theorem~\ref{T:Selfmaps1}, we obtain that $\card\rF_{\mathcal{V}}(\Omega)\ges2^{\card\Omega}$. However,
$\card\rF_{\mathcal{V}}(\Omega)\les \card \Omega+\card\Sigma+\aleph_0<2^{\card\Omega}$,
a contradiction.
\end{proof}

Observe that the context of Theorem~\ref{T:popular} covers most examples of algebras provided in \cite[Section~2.1]{burris}.

Our next result will show that the cardinality bound $\card\Sigma<2^{\card\Omega}$ in Theorem~\ref{T:popular} is optimal. For a monoid~$M$, an \emph{$M$-act} is a nonempty set~$X$ endowed with a map ($M\times X\to X$, $(\alpha,x)\mapsto\alpha\cdot x$) such that $1\cdot x=x$ and $\alpha\cdot(\beta\cdot x)=(\alpha\beta)\cdot x$ for all $\alpha,\beta\in M$ and all $x\in X$. Hence the similarity type of~$M$-acts consists of a collection, indexed by~$M$, of unary operation symbols. Furthermore, the free $M$-act on a set~$\Omega$, denoted by $\rF_M(\Omega)$,  can be identified with $M\times\Omega$, endowed with the `inclusion' map ($\Omega\hookrightarrow M\times\Omega$, $p\mapsto(1,p)$), and the multiplication defined by $\alpha\cdot(\beta,p)=(\alpha\beta,p)$.

For any set~$\Omega$, we shall consider the monoid~$\Rel\Omega$ of all binary relations on~$\Omega$, endowed with the composition operation defined by
 \begin{equation}\label{Eq:DefCompRel}
 \alpha\circ\beta=\setm{(x,y)\in\Omega\times\Omega}{(\exists z\in\Omega)
 ((x,z)\in\beta\text{ and }(z,y)\in\alpha)},
 \end{equation}
for all $\alpha,\beta\in\Rel\Omega$. The right hand side of~\eqref{Eq:DefCompRel} is denoted in many references by $\beta\circ\alpha$, however this conflicts with the notation $g\circ f$ for composition of functions, where every function is identified with its graph; as both composition operations will be needed in the proof, we choose to identify them. This should not cause much confusion as the monoid~$\Rel\Omega$ is \emph{self-dual}, that is, it has a dual automorphism. The latter is the \emph{transposition map} $\alpha\mapsto\alpha^{-1}$, where
 \[
 \alpha^{-1}=\setm{(x,y)\in\Omega\times\Omega}{(y,x)\in\alpha},
 \text{ for any }\alpha\in\Rel\Omega.
 \]

\begin{theorem}\label{T:AlmostDual}
Let~$\Omega$ be an infinite set and put $M=\Rel\Omega$. Then the monoid $\End\rF_M(\Omega)$ has a dual embedding.
\end{theorem}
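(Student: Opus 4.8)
\emph{Approach.}
Write $M=\Rel\Omega$ and $S=\End\rF_M(\Omega)$. Rather than trying to build a dual embedding of $S$ directly, I would obtain one by a sandwich argument: the plan is to produce two monoid embeddings,
\[
S\hookrightarrow\Rel\Gamma\qquad\text{and}\qquad M\hookrightarrow S^{\op},
\]
for a set $\Gamma$ with $\card\Gamma=\card\Omega$. Since then $\Rel\Gamma\cong\Rel\Omega=M$, composing yields a monoid embedding $S\hookrightarrow\Rel\Gamma\cong M\hookrightarrow S^{\op}$, i.e. a dual embedding of $S$, which is all that is required.

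\emph{The embedding $M\hookrightarrow S^{\op}$} works for an \emph{arbitrary} monoid $M$. Each $\gamma\in M$ defines an endomap $\varphi_\gamma$ of $\rF_M(\Omega)=M\times\Omega$ by right multiplication on the first coordinate, $\varphi_\gamma(\alpha,p)=(\alpha\gamma,p)$; this is $M$-equivariant (left and right multiplications on $M$ commute), so $\varphi_\gamma\in S$, and one has $\varphi_\gamma\circ\varphi_\delta=\varphi_{\delta\gamma}$ and $\varphi_1=\id$, while $\gamma\mapsto\varphi_\gamma$ is one-to-one because $\varphi_\gamma(1,p)=(\gamma,p)$. Hence $\gamma\mapsto\varphi_\gamma$ is an injective anti-homomorphism $M\to S$, that is, an injective monoid homomorphism $M\to S^{\op}$.

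\emph{The embedding $S\hookrightarrow\Rel\Gamma$} is the place where $M=\Rel\Omega$ is used. As $\rF_M(\Omega)$ is free on $\Omega$, an endomorphism $\varphi\in S$ is exactly a pair $(f,(a_p)_{p\in\Omega})$ with $f\in\Self\Omega$ and $a_p\in M$ arbitrary, via $\varphi(1,p)=(a_p,f(p))$; one computes $\varphi(\alpha,p)=(\alpha a_p,f(p))$ and $(\varphi\circ\psi)(1,p)=(b_p\,a_{g(p)},\,f(g(p)))$ whenever $\psi\leftrightarrow(g,(b_p))$. (Informally, $S$ is the monoid of column-monomial $\Omega\times\Omega$ matrices over $M=\Rel\Omega$, which sits inside the monoid of \emph{all} such matrices, canonically isomorphic to $\Rel(\Omega\times\Omega)$; the one subtlety is that $\rF_M(\Omega)=\coprod_\Omega M$ is a \emph{coproduct}, not a biproduct, so the function $f$ must be recorded separately.) Accordingly I would take $\Gamma=\Omega\sqcup(\Omega\times\Omega)$, of cardinality $\card\Omega$, and send $\varphi\leftrightarrow(f,(a_p))$ to the binary relation on $\Gamma$
\[
R_\varphi=\bigl\{(p,f(p)):p\in\Omega\bigr\}\ \cup\ \bigl\{\bigl((p,u),(f(p),v)\bigr):p,u,v\in\Omega,\ (v,u)\in a_p\bigr\}.
\]
The first block records $f$ (even on slots where $a_p=\es$), the second records the family $(a_p)$, so $\varphi\mapsto R_\varphi$ is injective; and one checks $R_{\id}=\Delta_\Gamma$ together with $R_{\varphi\circ\psi}=R_\varphi\circ R_\psi$ by a direct term-chase: the two blocks cannot interact because $\Omega$ and $\Omega\times\Omega$ are disjoint summands of $\Gamma$, the first block composes as composition of endomaps of $\Omega$, and chasing the second block reproduces exactly the product $b_p\,a_{g(p)}$ in $\Rel\Omega$, landing in slot $f(g(p))$. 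Thus $\varphi\mapsto R_\varphi$ is a monoid embedding $S\hookrightarrow\Rel\Gamma$, and the sandwich is complete.

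\emph{Main obstacle.} The calculations (the composition law in $S$, and $R_{\varphi\circ\psi}=R_\varphi\circ R_\psi$) are routine. The real point is conceptual: one must \emph{not} attempt to construct a dual embedding of $S$ head-on, because the ``$f$-coordinate'' of $S$ is a copy of $\Self\Omega$, which does not embed into its own dual (Corollary~\ref{C:Selfmaps2} with $\Gamma=\Omega$); instead one absorbs $S$ into $\Rel\Gamma$ by exploiting that $M=\Rel\Omega$ is itself a monoid of relations, and one pays careful attention to the coproduct (rather than biproduct) structure of $\rF_M(\Omega)$ by routing the function $f$ through a separate $\Omega$-component of $\Gamma$.
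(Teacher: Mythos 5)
Your proposal is correct and takes essentially the same route as the paper's proof: the same sandwich $S\hookrightarrow\Rel\Gamma\cong\Rel\Omega=M\hookrightarrow S^{\op}$, with $M\hookrightarrow S^{\op}$ given by right multiplication on the coefficient (the paper's assignment $x\mapsto(\id_\Omega,k_x)$) and $S$ identified with pairs $(f,(a_p)_{p\in\Omega})$ and then embedded into a relation monoid on a set of cardinality $\card\Omega$. The only (cosmetic) difference is how injectivity is secured when some $a_p=\es$: you route $f$ through a separate $\Omega$-block of $\Gamma$, whereas the paper adjoins a point $\infty$ and replaces each $a_p$ by $a_p\cup\set{(\infty,\infty)}$ inside $\Rel(\Omega\times\ol{\Omega})$.
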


\begin{proof}
The strategy of the proof will be the following:
\begin{enumerate}
  \item prove that for every monoid $M$ and every infinite set $\Omega$, the monoid $M^\op$ embeds in $\End\rF_M(\Omega)$; therefore $M\hookrightarrow (\End\rF_M(\Omega))^\op$;
  \item in case $M=\Rel\Omega$, prove that $\End\rF_M(\Omega)\hookrightarrow M$;
  \item items (i) and (ii) put together imply that $\End\rF_M(\Omega)\hookrightarrow(\End\rF_M(\Omega))^\op$.
\end{enumerate}

We start with any monoid~$M$. We put $x\cdot y=\famm{x(p)\cdot y(p)}{p\in\Omega}$ for any $x,y\in M^\Omega$, and we endow $\rE(M)=(\Self\Omega)\times M^\Omega$ with the multiplication given by
 \[
 (\alpha,x)\cdot(\beta,y)=(\alpha\beta,y\cdot(x\circ\beta)),
 \text{ for all }(\alpha,x),(\beta,y)\in\rE(M).
 \]
Each $(\alpha,x)\in\rE(M)$ defines an endomorphism $f_{(\alpha,x)}$ of $\rF_M(\Omega)=M\times\Omega$ by the rule
 \[
 f_{(\alpha,x)}(t,p)=(t\cdot x(p),\alpha(p)),\text{ for each }(t,p)\in M\times\Omega.
 \]
It is straightforward to verify that the assignment $(\alpha,x)\mapsto f_{(\alpha,x)}$ defines an isomorphism from~$(\rE(M),\cdot)$ onto $(\End\rF_M(\Omega),\circ)$.
Furthermore,
 \begin{equation}\label{Eq:MopEmbEnd}
 M^\op\text{ has a monoid embedding into }\End\rF_M(\Omega),
 \end{equation}
namely the assignment $x\mapsto(\id_\Omega,k_x)$, where  $k_x$ denotes the constant function on~$\Omega$ with value~$x$ (as in Section \ref{S:Endomaps}).

Now we specialize to $M=\Rel\Omega$. Let~$\infty$ be an object outside~$\Omega$ and put $\ol{\Omega}=\Omega\cup\set{\infty}$. With every $\alpha\in\Rel\Omega$ we associate the binary relation~$\ol{\alpha}=\alpha\cup\set{(\infty,\infty)}$. It is obvious that the assignment $\alpha\mapsto\ol{\alpha}$ defines a monoid embedding from~$\Rel\Omega$ into~$\Rel\ol{\Omega}$.

For each $(\alpha,x)\in\rE(M)$, we define the binary relation $\eta(\alpha,x)$ on~$\Omega\times\ol{\Omega}$ by
 \[
 \eta(\alpha,x)=\setm{((p_0,q_0),(p_1,q_1))\in(\Omega\times\ol{\Omega})^2}
 {p_1=\alpha(p_0)\text{ and }(q_1,q_0)\in\ol{x(p_0)}}.
 \]
It is straightforward to verify that the map~$\eta$ defines a monoid embedding from~$\rE(M)$ into~$\Rel(\Omega\times\ol{\Omega})$. (That~$\eta$ is one-to-one follows from our precaution of having replaced~$\Omega$ by~$\ol{\Omega}$ in the definition of the map~$\eta$; indeed, as the binary relation $\ol{x(p_0)}$ always contains the pair $(\infty,\infty)$, $\eta(\alpha,x)$ determines the pair $(\alpha,x)$.)
As~$\Rel(\Omega\times\ol{\Omega})$ is isomorphic to~$\Rel\Omega$ (use any bijection from~$\Omega\times\ol{\Omega}$ onto~$\Omega$) and by~\eqref{Eq:MopEmbEnd}, it follows from the self-duality of~$\Rel\Omega$ that the monoids~$\Rel\Omega$ and $\End\rF_M(\Omega)$ embed into each other. As $M=\Rel\Omega$ is self-dual, the conclusion follows.
\end{proof}

As shows the coming Corollary~\ref{C:NoEmbGsets}, Theorem~\ref{T:AlmostDual} cannot be extended to \emph{$G$-sets} (i.e., $G$-acts), for \emph{groups}~$G$.
See also Problem~\ref{Pb:SelfDual}.

\section{C-, S-, and M-independent subsets in algebras}\label{S:CSM}

We first recall some general notation and terminology. For an algebra~$A$ (that is, a nonempty set endowed with a collection of finitary operations), we denote by~$\Sub A$ (resp., $\End A$) the collection of all subuniverses (resp., endomorphisms) of~$A$. We also denote by~$\seq{X}$ the subuniverse of~$A$ generated by a subset~$X$ of~$A$; in case $X=\set{x_1,\dots,x_n}$, we shall write $\seq{x_1,\dots,x_n}$ instead of $\seq{\set{x_1,\dots,x_n}}$. We shall also put $X\vee Y=\seq{X\cup Y}$, for all $X,Y\in\Sub A$.
A subset~$I$ of~$A$ is said to be
\begin{itemize}
\item \emph{C-independent}, if $x\notin\seq{I\setminus\set{x}}$, for all~$x\in I$;

\item \emph{M-independent}, if every map from~$I$ to~$A$ can be extended to some homomorphism from~$\seq{I}$ to~$A$.
\item \emph{S-independent}, if every map from~$I$ to~$I$ can be extended to some homomorphism from ~$\seq{I}$ to~$A$.
\end{itemize}
In these definitions, C stands for \emph{closure}, as the definition of C-independence relies upon a closure operator; M stands for Marczewski who introduced M-independence in \cite{mar1}; S stands for \'{S}wierczkowski who introduced this notion in \cite{swier}.

Say that a subset~$I$ of~$A$ is \emph{non-degenerate}, if $I\cap\seq{\es}=\es$. The following result, with straightforward proof, shows that aside from degenerate cases, M-independence implies S-independence implies C-independence. (None of the converses hold as a rule \cite{glazek}).

\begin{proposition}\label{P:Degen}
Let $I$ be a subset in an algebra $A$. The following assertions hold:
\begin{enumerate}
\item $I$ is S-independent degenerate if{f} $I$ is a singleton contained in $\seq{\es}$.

\item $I$ is M-independent degenerate if{f} $I=A=\seq{\es}$ is a singleton.

\item If $I$ is M-independent, then $I$ is S-independent.

\item If $I$ is S-independent non-degenerate, then $I$ is C-independent.
\end{enumerate}
\end{proposition}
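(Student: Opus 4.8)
The plan is to prove the four assertions in the order (iii), (i), (ii), (iv), the point being that (i) and (ii) both rest on one elementary observation and that (ii) is a quick strengthening of (i). Assertion (iii) is immediate: any map from $I$ to $I$ is in particular a map from $I$ to $A$, so if every map $I\to A$ extends to a homomorphism on $\seq I$, then so does every map $I\to I$. The observation needed for (i) and (ii) is that a homomorphism fixes $\seq\es$ pointwise in the following sense: if $B$ is a subalgebra of $A$, if $b\in\seq\es$ (computed in $A$, hence also in $B$, since a constant term takes the same value in a subalgebra), and if $h\colon B\to A$ is a homomorphism, then $h(b)=b$, because $b=t^B$ for some constant term $t$ and $h(t^B)=t^A=b$. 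We apply this with $B=\seq I$.

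For (i), the backward direction is clear: if $I=\set{b}$ with $b\in\seq\es$, then $\seq I=\seq\es$, the only self-map of $I$ is the identity, and the inclusion $\seq\es\hookrightarrow A$ extends it; moreover $b\in I\cap\seq\es$, so $I$ is degenerate. For the forward direction, let $I$ be S-independent and degenerate and fix $b\in I\cap\seq\es$. For any $a\in I$, the constant map $I\to I$ with value $a$ extends to a homomorphism $\psi\colon\seq I\to A$, and then $a=\psi(b)=b$ by the observation above; hence $I=\set{b}$ and $b\in\seq\es$. For (ii), the backward direction is a direct check (when $A=\seq\es=\set{b}$ is a singleton, the only map $I\to A$ is the identity and $\seq I=A$). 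For the forward direction, if $I$ is M-independent and degenerate, then by (iii) and (i) we have $I=\set{b}$ with $b\in\seq\es$; now, for an arbitrary $c\in A$, the map $b\mapsto c$ extends to a homomorphism $\seq I\to A$ fixing $b$, whence $c=b$, so $A=\set{b}=\seq\es=I$.

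For (iv), suppose for contradiction that $I$ is S-independent and non-degenerate but not C-independent, so there is $x\in I$ with $x\in\seq{I\setminus\set{x}}$; write $x=t^A(y_1,\dots,y_n)$ for a term $t$ and elements $y_1,\dots,y_n\in I\setminus\set{x}$. Non-degeneracy forces $I\setminus\set{x}\neq\es$ (otherwise $x\in\seq\es\cap I$), so choose $y\in I\setminus\set{x}$. The self-map of $I$ that fixes $I\setminus\set{x}$ pointwise and sends $x$ to $y$ extends, by S-independence, to a homomorphism $\psi\colon\seq I\to A$. Since $\seq I$ is a subalgebra containing $x$ and all the $y_i$, the identity $x=t^{\seq I}(y_1,\dots,y_n)$ holds there; applying $\psi$ and using $\psi(y_i)=y_i$ gives $y=\psi(x)=t^A(y_1,\dots,y_n)=x$, contradicting $y\neq x$. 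Hence $I$ is C-independent. The only delicate point in the whole argument is the bookkeeping around $\seq\es$ and term operations in subalgebras — that a member of $\seq\es$ is the value of a constant term, that term operations in a subalgebra are restrictions of those in $A$, and that homomorphisms commute with term operations; once these are in place, each part is a one-line verification.
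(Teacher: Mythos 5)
Your proof is correct; the paper itself omits the argument entirely, merely labelling it ``straightforward,'' so there is no official proof to diverge from. Your write-up --- the observation that homomorphisms on subalgebras fix every element of $\seq{\es}$ (as values of constant terms), the constant-map trick for (i) and (ii), and the ``collapse $x$ onto some $y\in I\setminus\set{x}$ while fixing $I\setminus\set{x}$'' argument for (iv) --- is exactly the kind of routine verification the authors had in mind, and all the bookkeeping about term operations in subalgebras is handled correctly.
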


The following result generalizes the main part of Proposition~\ref{P:RkVecSp}. It relates the existence of large either S-independent or C-independent subsets of an algebra~$A$ and the existence of meet-embeddings of large $\Fin{I}$ into the subuniverse lattice of~$A$.

\begin{proposition}\label{P:rkIndep}
The following statements hold, for every algebra~$A$ and every set~$I$:
\begin{enumerate}
\item If~$I$ is a non-degenerate S-independent subset of~$A$, then $(\Fin{I},\cup,\cap)$ embeds into $(\Sub A,\vee,\cap)$.

\item If $(\Fin{I},\cap)$ embeds into $(\Sub A,\cap)$, then~$A$ has a C-independent subset~$X$ such that $\card I\les\card X$.
\end{enumerate}
\end{proposition}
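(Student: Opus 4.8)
The plan is to treat the two parts separately, each by an explicit construction mimicking the proof of Proposition~\ref{P:RkVecSp}, with ``linearly independent family'' replaced by ``S-independent'' or ``C-independent'' as appropriate.

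For part~(i), suppose $I$ is a non-degenerate S-independent subset of~$A$. The natural candidate is the map $\varphi\colon\Fin{I}\to\Sub A$ defined by $\varphi(X)=\seq{X}$ for each finite $X\subseteq I$. This is automatically a $\vee$-homomorphism since $\seq{X\cup Y}=\seq{X}\vee\seq{Y}$, and it is monotone. The two things to check are that it is injective and that it preserves $\cap$. For injectivity, observe that for distinct finite $X,Y\subseteq I$ we may pick, say, $x\in X\setminus Y$; using S-independence one extends a suitable map $I\to I$ (e.g. the map sending all of $Y$ — more precisely all of $I$ — to one point and distinguishing $x$; here one uses that $\card I\geq 2$ in the relevant case, or handles singletons directly) to a homomorphism separating $x$ from $\seq{Y}$, whence $x\notin\seq{Y}$. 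Actually the cleaner route is to first deduce from Proposition~\ref{P:Degen}(iv) that $I$ is C-independent, which gives $x\notin\seq{I\setminus\{x\}}\supseteq\seq{Y}$ immediately, hence $\varphi(X)\neq\varphi(Y)$. For the meet: clearly $\seq{X\cap Y}\subseteq\seq{X}\cap\seq{Y}$; conversely, if $a\in\seq{X}\cap\seq{Y}$, use S-independence to build the endomorphism $h$ of $\seq{I}$ extending the retraction $I\to I$ that fixes $X\cap Y$ and collapses the rest of $I$ onto a chosen point of $X\cap Y$ (non-degeneracy and $I$ non-empty are used to have such a point, and when $X\cap Y=\es$ one argues directly that $\seq{X}\cap\seq{Y}=\seq{\es}=\es$ from non-degeneracy and C-independence); then $h$ fixes $a$ (as $a\in\seq{X}$, hence $h(a)\in\seq{X\cap Y}$... one must be slightly careful here) — the correct argument is: $h\res\seq{X}$ maps into $\seq{X\cap Y}$ and similarly $h\res\seq{Y}$ maps into $\seq{X\cap Y}$, while $h$ restricted to $\seq{X\cap Y}$ is the identity; since $a$ lies in both $\seq{X}$ and $\seq{Y}$ and is fixed by $h$ (because $h$ is idempotent on the relevant pieces), $a\in\seq{X\cap Y}$.

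For part~(ii), suppose $\psi\colon(\Fin{I},\cap)\hookrightarrow(\Sub A,\cap)$ is a meet-embedding. Exactly as in Proposition~\ref{P:RkVecSp}, pick $e_i\in\psi(\{i\})\setminus\psi(\es)$ for each $i\in I$; this is possible since $\psi(\{i\})\neq\psi(\es)$ by injectivity, and $\psi(\es)\subseteq\psi(\{i\})$ by monotonicity (which follows from $\psi$ being a $\cap$-embedding). Set $X=\{e_i\mid i\in I\}$. One checks $i\mapsto e_i$ is injective (if $e_i=e_j$ with $i\neq j$ then $e_i\in\psi(\{i\})\cap\psi(\{j\})=\psi(\es)$, contradiction), so $\card X=\card I$. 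To see $X$ is C-independent, fix $i\in I$ and suppose $e_i\in\seq{X\setminus\{e_i\}}$. Since $\seq{\cdot}$ is finitary, $e_i\in\seq{\{e_j\mid j\in J\}}$ for some finite $J\subseteq I\setminus\{i\}$; but $\{e_j\mid j\in J\}\subseteq\psi(J)$ (as $e_j\in\psi(\{j\})\subseteq\psi(J)$ by monotonicity), and $\psi(J)$ is a subuniverse, so $e_i\in\psi(J)$. Combined with $e_i\in\psi(\{i\})$ this gives $e_i\in\psi(\{i\})\cap\psi(J)=\psi(\{i\}\cap J)=\psi(\es)$, contradicting the choice of $e_i$. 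Hence $X$ is C-independent with $\card I\les\card X$, as required.

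The routine parts are the homomorphism/monotonicity bookkeeping; the one genuinely delicate point is the meet-preservation in part~(i), specifically arranging the S-independent-induced endomorphism so that it simultaneously witnesses $\seq{X}\cap\seq{Y}\subseteq\seq{X\cap Y}$ — one must choose a single self-map of $I$ (a retraction onto $X\cap Y$, extended by a constant into $X\cap Y$) and exploit that its extension is idempotent on $\seq{X}$ and on $\seq{Y}$ with image inside $\seq{X\cap Y}$. The degenerate edge cases ($I$ empty or a singleton, $X\cap Y=\es$) should be dispatched separately at the start using Proposition~\ref{P:Degen}.
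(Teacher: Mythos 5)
Part~(ii) of your argument is correct and is essentially the paper's proof. Part~(i), however, contains two genuine gaps, both located exactly at the point you yourself flagged as delicate. The first concerns the case $X\cap Y=\es$: you claim that non-degeneracy and C-independence yield $\seq{X}\cap\seq{Y}=\seq{\es}$, but this is false in general. Take $A=\set{a,b,c}$ with a single unary operation constantly equal to~$c$, and $I=\set{a,b}$; then $I$ is S-independent and non-degenerate (here $\seq{\es}=\es$), yet $\seq{a}\cap\seq{b}=\set{c}\neq\seq{\es}$. So the map $\varphi$ with $\varphi(\es)=\seq{\es}$ is simply not a \mh. The paper circumvents this by \emph{redefining} $\varphi(\es)=\bigcap\famm{\seq{p}}{p\in I}$ (keeping $\varphi(X)=\seq{X}$ only for nonempty~$X$), and then handles $X\cap Y=\es$ by applying the nonempty case to $X\cup\set{p}$ and $Y\cup\set{p}$ for each $p\in I$, concluding $a\in\bigcap_{p\in I}\seq{p}=\varphi(\es)$.

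The second gap is in the meet-preservation argument when $X\cap Y\neq\es$: your endomorphism is the wrong one. You extend the retraction of~$I$ that fixes $X\cap Y$ and collapses \emph{everything else} (including $Y\setminus X$) onto a point $z_0\in X\cap Y$. This gives $h(a)\in\seq{X\cap Y}$, but it does not give $h(a)=a$: idempotence of~$h$ only yields $h(h(a))=h(a)$, and an idempotent map whose range lies in $\seq{X\cap Y}$ fixes exactly the elements of $\seq{X\cap Y}$ --- which is precisely what you are trying to show $a$ belongs to, so the step is circular. The paper's endomorphism instead fixes \emph{all} of~$Y$ and collapses only $X\setminus Y$ into $X\cap Y$. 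The asymmetry is essential: $h(a)=a$ because $a\in\seq{Y}$ and $h$ is the identity on the generators~$Y$, while $h(a)\in\seq{X\cap Y}$ because $a\in\seq{X}$ and $h(X)\subseteq X\cap Y$; combining the two gives $a\in\seq{X\cap Y}$. Your injectivity argument via Proposition~\ref{P:Degen}(iv) is fine for nonempty sets, but it too must be rechecked against the corrected value of $\varphi(\es)$, as the paper does.
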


\begin{proof}
(i). Let~$I$ be a non-degenerate S-independent subset of~$A$, we shall prove that $(\Fin{I},\cup,\cap)$ embeds into $(\Sub A,\vee,\cap)$. If~$I=\es$ then the result is trivial. Suppose that~$I=\set{p}$. As~$I$ is non-degenerate, $p\notin\seq{\es}$, thus~$\seq{\es}$ is strictly contained in~$\seq{p}$, and the result follows.

Suppose from now on that~$I$ has at least two elements. We define a map $\varphi\colon\Fin{I}\to\Sub A$ by setting
 \begin{equation}\label{Eq:defphi(es)}
 \varphi(\es)=\bigcap\famm{\seq{p}}{p\in I}\,,
 \end{equation}
while $\varphi(X)=\seq{X}$ for any nonempty $X\in\Fin{I}$. It is obvious that~$\varphi$ is a \jh\ from~$\Fin{I}$ to~$\Sub A$. Suppose that $\varphi(X)\subseteq\varphi(Y)$, for $X,Y\in\Fin{I}$, and let $p\in X\setminus Y$.
Suppose first that $Y=\es$. As $X\subseteq\varphi(X)\subseteq\varphi(Y)=\varphi(\es)$ and by~\eqref{Eq:defphi(es)}, we obtain that $p\in\seq{q}$ for each $q\in I$, thus, as~$I$ is C-independent (cf. Proposition~\ref{P:Degen}), $I=\set{p}$, a contradiction.
Suppose now that $Y$ is nonempty. Let $q\in I$. As~$I$ is S-independent, there exists an endomorphism~$f$ of~$\seq{I}$ such that $f(p)=q$ and $f\res_Y=\id_Y$.
{}From $X\subseteq\varphi(X)\subseteq\varphi(Y)=\seq{Y}$ it follows that $p\in\seq{Y}$, hence $q=f(p)=p$, so $I=\set{p}$, a contradiction.

Therefore, $\varphi$ is a \je.

Now let $X,Y\in\Fin{I}$, we shall prove that $\varphi(X)\cap\varphi(Y)$ is contained in $\varphi(X\cap Y)$. So let $a\in\varphi(X)\cap\varphi(Y)$. Fix one-to-one enumerations
 \begin{align*}
 X\setminus Y&=\set{x_0,\dots,x_{k-1}},\\
 Y\setminus X&=\set{y_0,\dots,y_{l-1}},\\
 X\cap Y&=\set{z_0,\dots,z_{n-1}}.
 \end{align*}
There are terms~$s$ and~$t$ such that
 \begin{equation}\label{Eq:2repra}
 a=s(x_0,\dots,x_{k-1},z_0,\dots,z_{n-1})=t(y_0,\dots,y_{l-1},z_0,\dots,z_{n-1}).
 \end{equation}
Suppose first that $X\cap Y\neq\es$, so $n>0$. As~$I$ is S-independent, there exists an endomorphism~$f$ of~$\seq{I}$ that fixes all~$y_i$s and all~$z_i$s such that $f(x_i)=z_0$ for each $i<k$. {}From the second equation in~\eqref{Eq:2repra} it follows that $f(a)=a$, hence, by the first equation in~\eqref{Eq:2repra},
 \[
 a=f(a)=s(\underbrace{z_0,\dots,z_0}_{k\text{ times}},z_0,\dots,z_{n-1})
 \in\varphi(X\cap Y)\,.
 \]
Now assume that $X\cap Y=\es$. By applying the case above to~$X\cup\set{p}$ and~$Y\cup\set{p}$, we obtain that $a\in\varphi(\set{p})=\seq{p}$, for each~$p\in I$. Hence, by~\eqref{Eq:defphi(es)}, $a$ belongs to~$\varphi(\es)$.

In any case, $a\in\varphi(X\cap Y)$, and so $\varphi$ is a \mh.

(ii). Let $\varphi\colon(\Fin{I},\cap)\hookrightarrow(\Sub A,\cap)$ be an embedding, and pick $e_i\in\varphi(\set{i})\setminus\varphi(\es)$, for any~$i\in I$. If $i$, $i_0$, \dots, $i_{n-1}$ are distinct indices in~$I$ and $e_i$ belongs to $\seq{e_{i_0},\dots,e_{i_{n-1}}}$, then it belongs to~$\varphi(\set{i})\cap\varphi(\set{i_0,\dots,i_{n-1}})=\varphi(\es)$, a contradiction. Therefore, the family $\famm{e_i}{i\in I}$ is C-independent.
\end{proof}

On the other hand, by mimicking the arguments used in the proofs of earlier results, we obtain the following set of results.

\begin{proposition}\label{P:NegEmbInd}
Let $A$ be an algebra, let~$\Omega$ be an infinite set, and let~$V$ be an infinite-dimensional right vector space over a division ring~$K$. Put $\kappa=(\card K)^{\dim V}$ and $\lambda=2^{\card\Omega}$. Then the following statements hold:
\begin{enumerate}
\item If $\Endf V$ has a semigroup embedding into $(\End A)^\op$, then $(\Subuf V,+)$ embeds into $(\Sub A,\cap)$.

\item If $(\Subuf V,+)$ embeds into $(\Sub A,\cap)$, then $(\Fin{\kappa},\cap)$ embeds into $(\Sub A,\cap)$.

\item If $\Selffin\Omega$ has a semigroup embedding into $(\End A)^\op$, then $(\Fin{\lambda},\cap)$ embeds into $(\Sub A,\cap)$.
\end{enumerate}
\end{proposition}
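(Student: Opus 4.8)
The plan is to establish the three parts in succession, each by transcribing an argument already carried out above. For~(i) I would reuse verbatim the machinery behind Theorem~\ref{T:EndIneq}: given a semigroup embedding $\eps\colon\Endf V\hookrightarrow(\End A)^\op$, set $\mu(\ker f)=\rng\eps(f)$ for every $f\in\Endf V$. Since $\eps(f)$ is an endomorphism of~$A$ its range is a subuniverse of~$A$, and since every finite-codimensional subspace of~$V$ is the kernel of some member of~$\Endf V$, this makes $\mu$ a map from $\Subuf V$ to $\Sub A$. The analogues of Lemmas~\ref{L:epskerim2}, \ref{L:muantemb2} and~\ref{L:mu(X+Y)} then go through without change, the only properties of the target used in their proofs being that the range of an endomorphism is a subuniverse and that an idempotent endomorphism fixes its range pointwise --- both valid for an arbitrary algebra~$A$. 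This yields that $\mu$ is well defined, that $X\subseteq Y$ if{f} $\mu(Y)\subseteq\mu(X)$ (whence $\mu$ is one-to-one), and that $\mu(X+Y)=\mu(X)\cap\mu(Y)$; so $\mu$ is an embedding from $(\Subuf V,+)$ into $(\Sub A,\cap)$.

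Part~(ii) is then a one-line invocation of Proposition~\ref{P:Subv+rk}: with index set $I=\kappa$, the (trivially true) inequality $\card\kappa\les(\card K)^{\dim V}$ produces an embedding from $(\Fin{\kappa},\cup,\cap,\es)$ into $(\Subuf V,\cap,+,V)$, hence from $(\Fin{\kappa},\cap)$ into $(\Subuf V,+)$, which composed with the hypothesised embedding of $(\Subuf V,+)$ into $(\Sub A,\cap)$ gives the claim. For~(iii) I would specialise~(i) and~(ii) to the $\FF_2$-vector space $V=(\FF_2)^{(\Omega)}$ with basis~$\Omega$, as in the proof of Corollary~\ref{C:EndIneq}: here $\dim V=\card\Omega$ is infinite, $\card V=\card\Omega$ so that $\Selffin V$ is isomorphic to $\Selffin\Omega$, and $\FF_2$ being finite, $\Endf V$ is a subsemigroup of $\Selffin V$; hence a semigroup embedding of $\Selffin\Omega$ into $(\End A)^\op$ restricts to one of $\Endf V$. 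Applying~(i) and then~(ii) with $\card K=2$ and $\dim V=\card\Omega$ --- so that $\kappa=2^{\card\Omega}=\lambda$ --- yields the desired embedding from $(\Fin{\lambda},\cap)$ into $(\Sub A,\cap)$.

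The only step requiring genuine care is the analogue of Lemma~\ref{L:mu(X+Y)} inside~(i): one must exhibit idempotents $f,g\in\Endf V$ with $\ker f=X$, $\ker g=Y$ such that $g\circ f$ again lies in~$\Endf V$ and has kernel $X+Y$. As in Lemma~\ref{L:mu(X+Y)} this is done via a decomposition $V=Z\oplus X'\oplus Y'\oplus T$ with $Z=X\cap Y$, $X=Z\oplus X'$, $Y=Z\oplus Y'$ and $V=(X+Y)\oplus T$; since $X$ and~$Y$ are finite-codimensional, each of $X'$, $Y'$, $T$ is finite-dimensional, so the three projections involved indeed belong to~$\Endf V$. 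The remaining delicate-looking point --- the reversal of composition induced by the passage to $(\End A)^\op$ when turning an equation $g=h\circ f$ in $\Endf V$ into a relation between $\eps(f)$ and~$\eps(g)$ --- is purely clerical and is handled exactly as in Lemmas~\ref{L:epskerim2} and~\ref{L:muantemb2}. Everything else is bookkeeping.
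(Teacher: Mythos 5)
Your proposal is correct and follows the paper's proof essentially verbatim: part (i) transplants the $\mu(\ker f)=\rng\eps(f)$ construction and Lemmas~\ref{L:epskerim2}--\ref{L:mu(X+Y)} from Theorem~\ref{T:EndIneq} (correctly noting that only the facts that ranges of endomorphisms are subuniverses and that idempotents fix their ranges are used), part (ii) is the intended application of Proposition~\ref{P:Subv+rk}, and part (iii) is the $(\FF_2)^{(\Omega)}$ specialization from Corollary~\ref{C:EndIneq}. No issues.
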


\begin{proof}
(i). Let $\eps\colon\Endf V\hookrightarrow(\End A)^\op$ be a semigroup embedding. As in the proof of Theorem~\ref {T:EndIneq}, we can construct a map $\mu\colon\Subuf V\to\Sub A$ by the rule $\mu(\ker f)=\rng\eps(f)$, for each $f\in\Endf V$. As in the proof of Theorem~\ref{T:EndIneq}, $\mu$ is an embedding from $(\Subuf V,+)$ into $(\Sub A,\cap)$.

(ii). It follows from Proposition~\ref{P:Subv+rk} that $(\Fin{\kappa},\cap)$ embeds into $(\Subuf V,+)$, thus into $(\Sub A,\cap)$.

(iii). As in the proof of Corollary~\textup{\ref{C:EndIneq}}, there exists a semigroup embedding from $\Endf\bigl((\FF_2)^{(\Omega)}\bigr)$ into $\Selffin\Omega$, and hence into $(\End A)^\op$. The conclusion follows then from~(i) and~(ii) above.
\end{proof}

\section{Embedding endomorphism semigroups of SC-ranked algebras}\label{S:IndepAlg}

In the present section we shall indicate how certain results of Sections~\ref{S:SubVectSp} and~\ref{S:EndVectSp} can be extended to more general objects, which we shall call \emph{SC-ranked algebras}.

We start by recalling the following result.

 \begin{lemma}[\cite{MKNT}, p.~50, Exercise 6]\label{L:506}
 For an algebra $A$, the following conditions are equivalent:
 \begin{enumerate}
\item [(1)] for every subset $X$ of $A$ and all elements $u,v$ of $A$, if $u\in\seq{X\cup\set{v}}$ and $u\notin\seq{X}$, then $v\in \langle X\cup\set{u}\rangle$;

\item [(2)] for every subset $X$ of $A$ and every element $u\in A$, if $X$ is C-independent and $u\notin \seq{X}$, then $X\cup \set{u}$ is C-independent;

\item [(3)] for every subset $X$ of $A$, if $Y$ is a maximal C-independent subset of $X$, then $\seq{X} =\seq{Y}$;

\item [(4)] for all subsets $X, Y$ of $A$ with $Y\subseteq X$, if $Y$ is
 C-independent, then there is a C-independent set $Z$ with $Y\subseteq Z\subseteq X$ and $\seq{Z}=\seq{X}$.
 \end{enumerate}
 \end{lemma}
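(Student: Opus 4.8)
The plan is to prove the equivalence of the four conditions characterizing when the closure operator $X\mapsto\seq{X}$ on $A$ has the (Steinitz-MacLane) exchange property, by establishing a cycle of implications, say (1)$\Rightarrow$(2)$\Rightarrow$(3)$\Rightarrow$(4)$\Rightarrow$(1), with perhaps the occasional shortcut. None of these steps is deep; the content is purely manipulation of the closure operator $\seq{\cdot}$, using only that it is algebraic (finitary) and monotone, together with idempotency ($\seq{\seq{X}}=\seq{X}$).

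First, for (1)$\Rightarrow$(2): assume $X$ is C-independent and $u\notin\seq{X}$, and suppose toward a contradiction that $X\cup\set{u}$ is not C-independent. Then some element $w$ of $X\cup\set{u}$ lies in $\seq{(X\cup\set{u})\setminus\set{w}}$. The case $w=u$ is exactly $u\in\seq{X}$, excluded; so $w\in X$, and $w\in\seq{(X\setminus\set{w})\cup\set{u}}$ while, by C-independence of $X$, $w\notin\seq{X\setminus\set{w}}$. Applying (1) with the set $X\setminus\set{w}$ in the role of $X$, the element $w$ in the role of $u$, and $u$ in the role of $v$, we get $u\in\seq{(X\setminus\set{w})\cup\set{w}}=\seq{X}$, a contradiction. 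For (2)$\Rightarrow$(3): let $Y$ be a maximal C-independent subset of $X$. Clearly $\seq{Y}\subseteq\seq{X}$. For the reverse inclusion it suffices to show $X\subseteq\seq{Y}$; if some $x\in X$ had $x\notin\seq{Y}$, then by (2) the set $Y\cup\set{x}$ would be a C-independent subset of $X$ strictly larger than $Y$, contradicting maximality. Hence $X\subseteq\seq{Y}$, so $\seq{X}\subseteq\seq{\seq{Y}}=\seq{Y}$.

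Next, (3)$\Rightarrow$(4): given $Y\subseteq X$ with $Y$ C-independent, use Zorn's Lemma (the union of a chain of C-independent sets is C-independent, since C-independence is a property of finite character) to extend $Y$ to a maximal C-independent subset $Z$ of $X$; then $Y\subseteq Z\subseteq X$, and $\seq{Z}=\seq{X}$ by (3). Finally, (4)$\Rightarrow$(1): suppose $u\in\seq{X\cup\set{v}}$ and $u\notin\seq{X}$; we must show $v\in\seq{X\cup\set{u}}$. First extend $\es$ (which is trivially C-independent) to a C-independent $Z_0\subseteq X$ with $\seq{Z_0}=\seq{X}$ using (4); replacing $X$ by $Z_0$ changes neither hypothesis nor conclusion, so we may assume $X$ itself is C-independent. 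If $v\in\seq{X}$, then $\seq{X\cup\set{v}}=\seq{X}$, contradicting $u\notin\seq{X}$; so $v\notin\seq{X}$, and then $X\cup\set{v}$ is C-independent (this is just (2), which we have already derived from (1)$\Leftarrow$\dots; to keep the cycle honest one instead argues directly: apply (4) to $X\subseteq X\cup\set{v}$ to get a C-independent $Z$ with $X\subseteq Z\subseteq X\cup\set{v}$ and $\seq{Z}=\seq{X\cup\set{v}}$, and since $v\notin\seq{X}=\seq{Z\setminus\set{v}}$ when $v\notin Z$ this forces $v\in Z$, so $Z=X\cup\set{v}$). Now suppose, toward a contradiction, that $v\notin\seq{X\cup\set{u}}$. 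Since $u\in\seq{X\cup\set{v}}$ we have $u\notin X\cup\set{v}$ being impossible to combine with C-independence unless $u\in X$, which contradicts $u\notin\seq{X}$; so consider $X\cup\set{u}$. By C-independence of $X$ and $u\notin\seq{X}$, the set $X\cup\set{u}$ is C-independent; applying (4) to $X\cup\set{u}\subseteq X\cup\set{u,v}$ yields a C-independent $W$ with $X\cup\set{u}\subseteq W\subseteq X\cup\set{u,v}$ and $\seq{W}=\seq{X\cup\set{u,v}}$. If $v\notin W$ then $W=X\cup\set{u}$ and hence $v\in\seq{X\cup\set{v}}\subseteq\seq{X\cup\set{u,v}}=\seq{W}=\seq{X\cup\set{u}}$ — wait, that gives $v\in\seq{X\cup\set{u}}$, contradicting our assumption; so $v\in W$, i.e. $W=X\cup\set{u,v}$ is C-independent, whence $u\notin\seq{(X\cup\set{u,v})\setminus\set{u}}=\seq{X\cup\set{v}}$, contradicting the hypothesis $u\in\seq{X\cup\set{v}}$. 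This completes the cycle.

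The only genuinely delicate point — and the one I would write out most carefully — is (4)$\Rightarrow$(1): the bookkeeping of which auxiliary sets are C-independent, and the repeated use of the fact that a C-independent superset $Z$ of $X$ inside $X\cup\set{v}$ must actually contain $v$ (equivalently equal $X\cup\set{v}$) once $v\notin\seq{X}$, requires care to avoid circular reasoning through (2). The cleanest route, which I would adopt in the final write-up, is to prove (1)$\Rightarrow$(2)$\Rightarrow$(3)$\Rightarrow$(4) first, and then close with (4)$\Rightarrow$(1) using only (4) and elementary closure-operator facts, as sketched above; alternatively one can note that (2) follows from (4) directly (maximal C-independent subsets of $X\cup\set{u}$ containing $X$), and then freely use (2) inside the proof of (4)$\Rightarrow$(1). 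All other implications are immediate. Since the lemma is quoted from \cite{MKNT} as a known exercise, a brief indication of the cycle of implications, with the exchange manipulations spelled out for (4)$\Rightarrow$(1), is all that is needed.
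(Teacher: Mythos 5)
The paper does not prove this lemma at all: it is quoted verbatim from McKenzie--McNulty--Taylor (p.~50, Exercise~6) and used as a black box, so there is no in-paper argument to compare against. Your cycle (1)$\Rightarrow$(2)$\Rightarrow$(3)$\Rightarrow$(4)$\Rightarrow$(1) is the standard one and is correct: the finite character of C-independence (needed for the Zorn step in (3)$\Rightarrow$(4)) does follow from the operations of $A$ being finitary, and the bookkeeping in (4)$\Rightarrow$(1) — reducing to $X$ C-independent, showing a C-independent $Z$ with $X\subseteq Z\subseteq X\cup\set{v}$ and $\seq{Z}=\seq{X\cup\set{v}}$ must equal $X\cup\set{v}$ when $v\notin\seq{X}$, and then deriving a contradiction from either branch of the dichotomy on $W$ — goes through. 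The one flaw is the garbled sentence ``Since $u\in\seq{X\cup\set{v}}$ we have $u\notin X\cup\set{v}$ being impossible\dots''; what you actually need there is just the disposal of two degenerate cases, namely $u\in X$ (excluded by $u\notin\seq{X}$) and $u=v$ (which makes the conclusion $v\in\seq{X\cup\set{u}}$ trivial), after which $(X\cup\set{u,v})\setminus\set{u}=X\cup\set{v}$ as your final contradiction requires; rewriting that sentence accordingly would make the proof clean.
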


An algebra $A$ is said to be a \emph{matroid algebra} if it satisfies one (and hence all) of the equivalent conditions of Lemma~\ref{L:506}.

\begin{definition}\label{D:Tbasis}
For $\rT\in\set{\rM,\rS,\rC}$, a \emph{T-basis} of an algebra~$A$ is a T-independent generating subset of $A$. We say that~$A$ is a \emph{T-algebra} if it has a T-basis.
\end{definition}

Clearly every free algebra is an M-algebra, thus an S-algebra.

\begin{definition}\label{D:TQalg}
For $\rT,\rQ\in\set{\rM,\rS,\rC}$, a \emph{TQ-algebra} is an algebra where the notions of T-independence and Q-independence coincide.
\end{definition}

The MC-algebras appear in the literature as \emph{$v^{**}$-algebras} (see \cite{nar3, ur5}). Every absolutely free algebra is an MC-algebra (see \cite{ur5} for this and many other examples).

A matroid MC-algebra is said to be an \emph{independence algebra}. These algebras attracted the attention  of experts in Universal Algebra (they were originally called \emph{$v^*$-algebras}; see \cite{ArFo,mar1,mar2,mar3,mar4,nar,nar2,nar3,ur1,ur2,ur3,ur5} and \cite{glazek} for  hundreds of references on the topic), Logic (e.g. \cite{givant1,givant2, zilber1,zilber2}) and Semigroup Theory (e.g. \cite{FoLe, FoLe2,gould}).
Familiar examples of independence algebras are sets, free $G$-sets (for a group $G$) and vector spaces (see \cite{CaSz,ur5}). Observe that independence algebras are MC-algebras and the latter are SC-algebras.

\begin{definition}\label{D:Sranked}
An algebra $A$ is said to be \emph{SC-ranked}, if it has an S-basis~$\Omega$ such that $\card X\les\card\Omega$ for each C-independent subset~$X$ of~$A$. The cardinality of this set~$\Omega$ is said to be the \emph{rank} of~$A$, and denoted by~$\Rank A$.
\end{definition}

By Lemma \ref{L:506}(4), every matroid S-algebra~$A$ is an SC-ranked algebra. Observe that $\Rank A$ is then the cardinality of \emph{any} C-basis of~$A$.

It should  be observed that not every SC-algebra contains a C-independent generating set (see the example following the proof of Theorem~4 in \cite[Section~32]{GrUA}).

\begin{theorem}\label{T:EmbBded}
Let~$A$ and~$B$ be SC-ranked algebras with~$\Rank A$ infinite. If there exists a semigroup embedding from $\End A$ into $(\End B)^\op$, then $\Rank B\ges 2^{\Rank A}$.
\end{theorem}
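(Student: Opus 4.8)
The plan is to mimic, in the general SC-ranked setting, the chain of reasoning that in the vector space case led from Theorem~\ref{T:EndIneq} back to Theorem~\ref{T:NoLattEmb}. Let $\Omega$ be an S-basis of~$A$ with $\card\Omega=\Rank A$, and fix a semigroup embedding $\eps\colon\End A\hookrightarrow(\End B)^\op$. First I would isolate, inside~$\End A$, an analogue of~$\Endf V$ (or of $\Selfltwo\Omega$): the subsemigroup~$\End^{\mathrm{fin}}A$ of endomorphisms whose range is generated by finitely many elements, equivalently those of the form $h\circ e$ with $e$ an idempotent having finitely generated range. The relevant poset replacing $\Subuf V$ is the set of subuniverses of the form $\langle X\rangle$ for $X$ a finite subset of~$\Omega$, or rather the poset of \emph{finitely generated congruences} $\Ker f$ for $f\in\End^{\mathrm{fin}}A$, ordered by inclusion; S-independence of~$\Omega$ is exactly what guarantees the supply of idempotents and of the ``separating'' endomorphisms needed to run the arguments of Lemmas~\ref{L:epskerim2}, \ref{L:muantemb2} and~\ref{L:mu(X+Y)}.

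The key steps, in order, are: (1) Show that for finite $X\subseteq\Omega$ there is an idempotent $e_X\in\End A$ with $\langle\rng e_X\rangle=\langle X\rangle$ (retract $A$ onto $\langle X\rangle$ by sending $\Omega\setminus X$ into $X$, using S-independence), so that kernels of such idempotents form a meet-subsemilattice of the congruence lattice isomorphic, via $X\mapsto$ its kernel, to $(\Fin{\Omega},\cap)$ up to the subtlety of ``degenerate'' points. (2) Define $\mu(\Ker f)=\rng\eps(f)$; Lemma~\ref{L:epskerim2}'s proof transcribes verbatim (if $\Ker f\subseteq\Ker g$ then $g=h\circ f$ for some~$h$, hence $\eps(g)=\eps(f)\circ\eps(h)$ in $(\End B)^\op$), giving antitonicity, and the idempotent argument of Lemma~\ref{L:muantemb2} gives that $\mu$ is an order-anti-embedding. (3) The join-to-meet step: given finite $X,Y$, build idempotents $f,g$ with $\Ker f,\Ker g$ the kernels of the retractions onto $\langle X\rangle,\langle Y\rangle$ and such that $g\circ f$ retracts onto a complement-type subuniverse with kernel $\langle X\cup Y\rangle$-congruence; then the fixed-point argument of Lemma~\ref{L:mu(X+Y)} yields $\mu(\text{join})=\mu(X)\cap\mu(Y)$. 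Combining (1)--(3), $(\Fin{\Omega},\cap)$ embeds into $(\Sub B,\cap)$ (after discarding the at most one degenerate element, which does not affect cardinality since $\Omega$ is infinite). (4) Apply Proposition~\ref{P:rkIndep}(ii): $B$ has a C-independent subset of size at least $\card\Omega=\Rank A$. But this only gives $\Rank B\ges\Rank A$, not $2^{\Rank A}$, so a strengthening is needed: instead of indexing by finite subsets of~$\Omega$ one indexes, as in the proof of Theorem~\ref{T:Selfmaps1} via the relations $\theta_Z$, by \emph{all} subsets $Z$ of a co-point $\Omega^*$ of~$\Omega$, using the two-block congruences $\theta_Z$ and the constancy phenomenon of Lemma~\ref{L:gf}; the SC-ranked hypothesis is precisely what makes ``$2^{\card\Omega}$'' an upper bound for C-independent subsets of~$A$ collide productively with this lower construction in~$B$. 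So the genuine route is: show $(\Fin{2^{\card\Omega}},\cap)\hookrightarrow(\Sub B,\cap)$ by running the $\theta_Z$ partition argument of Section~\ref{S:Endomaps} inside $\End A$ (legitimate because $\Self\Omega\hookrightarrow\End A$ through the S-basis), then invoke Proposition~\ref{P:rkIndep}(ii) and the definition of $\Rank B$ to conclude $\Rank B\ges 2^{\card\Omega}=2^{\Rank A}$.

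Concretely, I would structure the write-up as: reduce to the statement that $\Selfltwo\Omega$ (hence $\Self\Omega$) has a semigroup embedding into $(\Self\Gamma)^\op$ whenever such an $\eps$ exists, where $\Gamma$ is a C-basis-sized carrier for~$B$; but more cleanly, observe that the composite $\Selffin\Omega\hookrightarrow\End A\xrightarrow{\eps}(\End B)^\op\hookrightarrow(\Self B)^\op$ is a semigroup embedding, so by Corollary~\ref{C:EndIneq}'s method (or directly Theorem~\ref{T:Selfmaps1} applied with $\Selfltwo\Omega$) we already get $(\Fin{2^{\card\Omega}},\cap)\hookrightarrow(\Sub A,\cap)$-style data pushed into~$B$. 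Then Proposition~\ref{P:NegEmbInd}(iii) packages exactly this: if $\Selffin\Omega$ embeds into $(\End B)^\op$ then $(\Fin{2^{\card\Omega}},\cap)$ embeds into $(\Sub B,\cap)$; and Proposition~\ref{P:rkIndep}(ii) then forces a C-independent subset of~$B$ of cardinality $\ges 2^{\card\Omega}$, whence $\Rank B\ges 2^{\card\Omega}=2^{\Rank A}$ by the definition of SC-ranked. The only thing to check by hand is that $\Selffin\Omega$ really does embed into~$\End A$ via the S-basis~$\Omega$ — every self-map of~$\Omega$ with finite range extends to an endomorphism of $A$ by S-independence, and composition is respected since $\langle\Omega\rangle=A$ — together with the harmless bookkeeping about degenerate elements of~$\Omega$.

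The main obstacle I anticipate is point~(3), the join/meet compatibility, because in a general algebra one cannot take direct-sum complements the way one does for subspaces; the substitute is to work with the two-block congruences $\theta_Z$ exactly as in Section~\ref{S:Endomaps}, where the only algebraic input is the existence of idempotent endomaps of~$\Omega$ itself and their extension to~$A$ — so the real content is verifying that pulling the Section~\ref{S:Endomaps} construction back along $\Self\Omega\hookrightarrow\End A$ and pushing it forward along $\eps$ lands inside $\Sub B$ and behaves as a meet-semilattice embedding, which is what Propositions~\ref{P:NegEmbInd} and~\ref{P:rkIndep} were set up to deliver. A secondary nuisance is the degenerate case $\Omega\cap\seq{\es}\neq\es$ (at most one such point), handled by simply removing it; since $\Rank A$ is infinite this changes neither $\card\Omega$ nor $2^{\card\Omega}$.
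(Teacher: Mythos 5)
Your final route --- embed $\Selffin\Omega$ into $\End A$ via the S-basis (the extension of each finite-range endomap of~$\Omega$ is unique because $\Omega$ generates~$A$, so composition is preserved), compose with~$\eps$, then apply Proposition~\ref{P:NegEmbInd}(iii) and Proposition~\ref{P:rkIndep}(ii) to obtain a C-independent subset of~$B$ of cardinality at least $2^{\Rank A}$, and conclude by SC-rankedness of~$B$ --- is exactly the paper's proof, and it is correct. The long preliminary detour through a direct generalization of the $\mu$-map of Section~\ref{S:EndVectSp} is unnecessary (and, as you yourself note, breaks down for want of complements in a general algebra), but you correctly abandon it in favour of the packaged argument.
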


\begin{proof}
Let~$X$ be an S-basis of~$A$. Then $\Selffin X$ embeds into $\Self X$, which (as~$X$ is an S-basis) embeds into $\End A$, which embeds into $(\End B)^\op$. Therefore, by Proposition~\ref{P:NegEmbInd}(iii) combined with Proposition~\ref{P:rkIndep}(ii), there exists a C-independent set $Y\subseteq B$  with $\card Y\ges 2^{\card X}$. As~$B$ is SC-ranked, $\card Y\les\Rank B$ and the result follows.
\end{proof}

\begin{corollary}\label{C:RkA=RkBindep}
For SC-ranked algebras $A$, $B$ such that $\Rank A\ges\Rank B\ges \aleph_0$, there is no semigroup embedding from~$\End A$ into~$(\End B)^\op$. In particular, the semigroup $\End A$ has no dual embedding.
\end{corollary}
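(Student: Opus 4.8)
The plan is to derive Corollary~\ref{C:RkA=RkBindep} directly from Theorem~\ref{T:EmbBded}, treating it essentially as a cardinality contradiction. Suppose, towards a contradiction, that there were a semigroup embedding from $\End A$ into $(\End B)^\op$. Since $\Rank A$ is infinite by hypothesis, Theorem~\ref{T:EmbBded} applies and yields $\Rank B\ges 2^{\Rank A}$. On the other hand, Cantor's theorem gives $2^{\Rank A}>\Rank A$, so combining these with the hypothesis $\Rank A\ges\Rank B$ we obtain
 \[
 \Rank B\ges 2^{\Rank A}>\Rank A\ges\Rank B,
 \]
which is absurd. Hence no such embedding exists.

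For the second assertion, I would simply specialize $B=A$. An algebra obviously satisfies $\Rank A\ges\Rank A\ges\aleph_0$ (the latter because $\Rank A$ was assumed infinite in the ambient hypothesis, or rather: we apply the first part with $B=A$, for which $\Rank A\ges\Rank A$ holds trivially and $\Rank A\ges\aleph_0$ is the standing assumption). Therefore the first part of the corollary, applied with $B=A$, tells us that there is no semigroup embedding from $\End A$ into $(\End A)^\op$; that is, $\End A$ has no dual embedding, as a dual embedding is by definition an embedding into the opposite semigroup.

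I do not anticipate any real obstacle here: the entire content has been front-loaded into Theorem~\ref{T:EmbBded}, and this corollary is a one-line deduction using Cantor's inequality $2^\kappa>\kappa$. The only point that warrants a word of care is making sure the infiniteness hypothesis on $\Rank A$ is genuinely available so that Theorem~\ref{T:EmbBded} can be invoked — but that is guaranteed by the stated condition $\Rank A\ges\Rank B\ges\aleph_0$, which forces $\Rank A\ges\aleph_0$. One could also remark that the statement stays within the realm of plain cardinal arithmetic and requires no choice principles beyond whatever is used in the earlier development.

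\begin{proof}
Suppose, to the contrary, that there exists a semigroup embedding from $\End A$ into $(\End B)^\op$. As $\Rank A\ges\aleph_0$, Theorem~\ref{T:EmbBded} gives $\Rank B\ges 2^{\Rank A}$. Since $2^{\Rank A}>\Rank A\ges\Rank B$ by Cantor's theorem and the hypothesis, we get $\Rank B>\Rank B$, a contradiction. For the last statement, apply the result just proved with $B=A$: the hypotheses $\Rank A\ges\Rank A\ges\aleph_0$ hold, so there is no semigroup embedding from $\End A$ into $(\End A)^\op$, i.e., $\End A$ has no dual embedding.
\end{proof}
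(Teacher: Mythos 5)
Your argument is correct and is exactly the intended derivation: the paper states this corollary immediately after Theorem~\ref{T:EmbBded} with no separate proof, precisely because it follows from that theorem together with Cantor's inequality $2^{\Rank A}>\Rank A$ and the specialization $B=A$ for the final sentence. Nothing is missing.
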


In particular, Corollary~\ref{C:RkA=RkBindep} applies to independence algebras.

The classification problem of all MC-algebras is open since the mid sixties. As Gr\"{a}tzer says ``\emph{There are some results on [the classification of MC-algebras, that is] $v^{**}$-algebras; but the problem is far from settled}'' \cite[p. 205]{GrUA}. Likewise, SC-ranked algebras are not classified; in fact, the requirement to be SC-ranked seems so weak that it seems unlikely that this could ever be done. For example, Theorems~\ref{T:FM(Omega)Srk} and~\ref{T:Srkmodule} give us, respectively, a characterization of SC-ranked free $M$-acts (for monoids~$M$) and a sufficient condition for a free module to be SC-ranked, in terms of an antichain condition of the left divisibility relation on the monoid, and a n\oe therianity condition on the ring, respectively. The corresponding classes of monoids, or rings, are so large that they are certainly beyond the reach of any classification.

Another point is that in order to obtain results such as Theorem~\ref{T:EmbBded}, the statement, for an algebra~$A$, to be SC-ranked, is a compromise between conciseness and generality. In particular, it can be further weakened (e.g., by using meet-embeddings of semilattices $\Fin{I}$ into subuniverse lattices), and it seems likely that more algebras would satisfy the possible weakenings of SC-rankedness, although it is unclear whether there would be any `natural' such example.

In Sections~\ref{S:SrkfreeMsets} and~\ref{S:SrkedMod}, we shall illustrate the notion of SC-rankedness on \emph{$M$-acts} and \emph{modules}.

\section{SC-ranked free $M$-acts}\label{S:SrkfreeMsets}

In the present section, we shall characterize SC-ranked free $M$-acts (cf. Section~\ref{S:FreeAlg}).

In any monoid~$M$, we define preorderings~$\utrl$ and~$\utrr$ by the rule
 \[
 u\utrl v\Leftrightarrow(\exists t)(v=tu)\,,\quad
 u\utrr v\Leftrightarrow(\exists t)(v=ut)\,,\quad\text{for all }u,v\in M.
 \]
We say that~$M$ is \emph{left uniserial}, if~$\utrl$ is a total preordering, that is, for any elements $u,v\in M$, either $u\utrl v$ or $v\utrl u$. This occurs, in particular, in the somehow degenerate case where~$M$ is a \emph{group}.

\begin{theorem}\label{T:FM(Omega)Srk}
Let~$M$ be a monoid and let~$\Omega$ be a nonempty set. Then $\rF_M(\Omega)$ is SC-ranked if{f} either~$\Omega$ is finite and~$M$ is left uniserial, or~$\Omega$ is infinite and every $\utrl$-antichain of~$M$ has at most~$\card\Omega$ elements.
\end{theorem}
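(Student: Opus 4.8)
## Proof plan for Theorem~\ref{T:FM(Omega)Srk}

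The plan is to treat the two cases of $\Omega$ separately, and in each to unwind what the two defining conditions of SC-rankedness (Definition~\ref{D:Sranked}) amount to for $\rF_M(\Omega)$, using the identification $\rF_M(\Omega)=M\times\Omega$ from Section~\ref{S:FreeAlg}. The natural S-basis (indeed M-basis, hence C-basis) is $\Omega\times\set{1}$, which has cardinality $\card\Omega$. So the first half of SC-rankedness always holds with $\Rank\rF_M(\Omega)=\card\Omega$, and the content of the theorem is entirely about the second half: that every C-independent subset of $M\times\Omega$ has cardinality at most $\card\Omega$. The key technical step will therefore be to describe C-independent subsets of $M\times\Omega$, i.e.\ subsets $I$ such that no $(t,p)\in I$ lies in the subact generated by $I\setminus\set{(t,p)}$. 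Since the subact generated by a subset $J\subseteq M\times\Omega$ is exactly $\setm{(ut,p)}{(t,p)\in J,\ u\in M}=\bigcup_{(t,p)\in J}(Mt\times\set{p})$, one sees that $I$ is C-independent iff for each $p\in\Omega$ the ``$p$-fiber'' $I_p=\setm{t\in M}{(t,p)\in I}$ is a $\utrl$-antichain of $M$ (no element of $I_p$ is a left multiple $ut'$ of another element $t'$ of $I_p$). This reduction is the heart of the argument; everything else is cardinal arithmetic.

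Granting that description, the infinite case is immediate: if every $\utrl$-antichain of $M$ has at most $\card\Omega$ elements, then every C-independent $I$ satisfies $\card I=\sum_{p\in\Omega}\card I_p\les\card\Omega\cdot\card\Omega=\card\Omega$, so $\rF_M(\Omega)$ is SC-ranked with rank $\card\Omega$; conversely, if $M$ has a $\utrl$-antichain $A$ with $\card A>\card\Omega$, then picking any $p\in\Omega$ (using $\Omega\neq\es$), the set $A\times\set{p}$ is C-independent of cardinality $>\card\Omega$, so SC-rankedness fails. For the finite case, one first checks that ``$M$ left uniserial'' is equivalent to ``every $\utrl$-antichain of $M$ has at most one element'': if $\utrl$ is total then an antichain has at most one element, and conversely two $\utrl$-incomparable elements would form a $2$-element antichain. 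If $M$ is left uniserial, then for finite $\Omega$ every C-independent $I$ has each $I_p$ of size at most $1$, so $\card I\les\card\Omega$, giving SC-rankedness. If $M$ is not left uniserial, take $u,v$ incomparable and note that $\set{(u,p),(v,p)}$ is a C-independent subset of $M\times\set{p}$; iterating, one can build arbitrarily large finite C-independent sets inside a single fiber, whose cardinalities are unbounded, so no finite $\Omega$-sized $\Omega$ can dominate all of them and $\rF_M(\Omega)$ is not SC-ranked. (One must be slightly careful here: SC-rankedness for finite $\Omega$ requires a uniform finite bound $\card\Omega$ on \emph{all} C-independent sets, so the existence of a single infinite antichain is not needed—merely arbitrarily large finite ones, which incomparable $u,v$ already supply via $\set{u,uv,uv^2,\dots}$ or a similar explicit family.)

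The main obstacle I expect is pinning down exactly which finite subsets of a single fiber $M\times\set{p}$ are C-independent, i.e.\ verifying the claim that C-independence of $I$ is equivalent to each fiber $I_p$ being a $\utrl$-antichain. The ``$\Leftarrow$'' direction is the delicate one: one must show that if each $I_p$ is an antichain then no $(t,p)\in I$ lies in $\seq{I\setminus\set{(t,p)}}$, which requires knowing that the subact generated by a subset is precisely the union of the principal left-multiple cones over its elements with the second coordinate untouched — a routine but essential description of generation in $M$-acts. Once that lemma is in hand, the rest is a bookkeeping exercise in cardinal arithmetic and the translation between ``left uniserial'' and ``antichains of size $\les 1$'', as sketched above.
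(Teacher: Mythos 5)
Your overall architecture coincides with the paper's: both proofs rest on the observation that a subset of $\rF_M(\Omega)=M\times\Omega$ is C-independent exactly when each fiber $\setm{t\in M}{(t,p)\in Y}$ is a $\utrl$-antichain, and then reduce everything to cardinal arithmetic. Three of your four cases (infinite $\Omega$ with small antichains; an antichain larger than $\card\Omega$; finite $\Omega$ with $M$ left uniserial) are handled exactly as in the paper.

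The remaining case --- $\Omega$ finite and $M$ not left uniserial --- contains a genuine gap. You propose to produce arbitrarily large finite C-independent sets \emph{inside a single fiber}, claiming that two $\utrl$-incomparable elements $u,v$ supply arbitrarily large $\utrl$-antichains ``via $\set{u,uv,uv^2,\dots}$ or a similar explicit family''. This is false. Already in the free commutative monoid on $\set{u,v}$ one has $u\utrl uv$, so your family is not an antichain beyond its first two terms; worse, in the monoid $M=\set{1,a,b,0}$ in which every product of two non-identity elements equals $0$, the elements $a$ and $b$ are $\utrl$-incomparable yet \emph{every} $\utrl$-antichain of $M$ has at most two elements. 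For that $M$ and a finite $\Omega$ with $\card\Omega\ges2$, no single-fiber C-independent set exceeds $\card\Omega$, so your construction cannot witness the failure of SC-rankedness, even though the theorem asserts that $\rF_M(\Omega)$ is not SC-ranked. The repair is the paper's construction: spread the two incomparable elements over \emph{all} fibers and take $\setm{u\cdot p}{p\in\Omega}\cup\setm{v\cdot p}{p\in\Omega}$, a C-independent set of cardinality $2\cdot\card\Omega>\card\Omega$. (Both you and the paper also pass silently over the point that no \emph{other} S-basis could witness SC-rankedness; this is harmless, since any C-independent generating subset of $\rF_M(\Omega)$ must have singleton fibers and hence cardinality exactly $\card\Omega$, but it deserves a sentence.)
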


\begin{proof}
We shall repeatedly use the easily verified fact that the C-independent subsets of~$\rF_M(\Omega)$ are exactly the subsets~$Y$ such that
$Y\cdot p^{-1}=\setm{u\in M}{u\cdot p\in Y}$ is a $\utrl$-antichain for every~$p\in\Omega$. Observe also that~$\Omega$ is an M-basis, thus an S-basis, of~$\rF_M(\Omega)$.

Suppose first that~$M$ has a $\utrl$-antichain~$U$ such that $\card\Omega<\card U$. Pick $p\in\Omega$. Observe that $U\cdot p=\setm{u\cdot p}{u\in U}$ is a C-independent subset of~$\rF_M(\Omega)$ of cardinality greater than~$\card\Omega$. As~$\Omega$ is an S-basis of~$\rF_M(\Omega)$, it follows that~$\rF_M(\Omega)$ is not SC-ranked.

Now suppose that~$M$ is not left uniserial and~$\Omega$ is finite. Let $u,v\in M$ be $\utrl$-incomparable. Then the subset $\setm{u\cdot p}{p\in\Omega}\cup\setm{v\cdot p}{p\in\Omega}$ is a C-independent subset of~$\rF_M(\Omega)$ with cardinality~$2\cdot\card\Omega$, so again~$\rF_M(\Omega)$ is not SC-ranked.

If~$M$ is left uniserial, then the C-independent subsets of~$\rF_M(\Omega)$ are exactly the subsets of the form $\setm{f(p)\cdot p}{p\in X}$, for a subset~$X$ of~$\Omega$ and a map~$f\colon X\to M$. Hence every C-independent subset has at most $\card\Omega$ elements, and so~$\rF_M(\Omega)$ is SC-ranked.

Finally assume that~$\Omega$ is infinite and that every $\utrl$-antichain of~$M$ has cardinality at most~$\card\Omega$. For every C-independent subset~$Y$ of~$\rF_M(\Omega)$ and every $p\in\Omega$, the subset~$Y\cdot p^{-1}$ is a~$\utrl$-antichain of~$M$, thus it has cardinality below~$\card\Omega$; hence, as~$\Omega$ is infinite, $\card Y\les\card\Omega$. Therefore, $\rF_M(\Omega)$ is SC-ranked.
\end{proof}

As an immediate consequence of Corollary~\ref{C:RkA=RkBindep} and Theorem~\ref{T:FM(Omega)Srk}, we observe the following.

\begin{corollary}\label{C:NoEmbSCRM}
Let~$M$ be a monoid and let~$\Omega$ be an infinite set. If every $\utrl$-antichain of~$M$ has at most~$\card\Omega$ elements, then the semigroup $\End\rF_M(\Omega)$ has no dual embedding.
\end{corollary}

Observe that $\rF_M(\Omega)$ is almost never a matroid algebra:

\begin{proposition}\label{P:FM(Omega)matroid}
Let~$M$ be a monoid and let~$\Omega$ be a nonempty set. Then $\rF_M(\Omega)$ is a matroid algebra if{f}~$M$ is a group.
\end{proposition}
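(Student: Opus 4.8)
The plan is to check one of the four equivalent matroid conditions of Lemma~\ref{L:506} when $M$ is a group, and to produce an explicit counterexample to condition~(2) of that lemma when $M$ is not a group. Throughout I would use the identification $\rF_M(\Omega)=M\times\Omega$ with action $\alpha\cdot(\beta,p)=(\alpha\beta,p)$, so that, since every fundamental operation of an $M$-act is unary, $\seq{Y}=\setm{(t\beta,p)}{(\beta,p)\in Y,\ t\in M}$ for every $Y\subseteq M\times\Omega$; in particular $\seq{(\beta,p)}=M\beta\times\set{p}$, unions of subuniverses are again subuniverses, and $\seq{X\cup\set{v}}=\seq{X}\cup\seq{v}$. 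I would also reuse the characterization recorded in the proof of Theorem~\ref{T:FM(Omega)Srk}: a subset $Y$ is C-independent if{f} $Y\cdot p^{-1}$ is a $\utrl$-antichain of $M$ for every $p\in\Omega$.

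For the direction ``$M$ a group $\Rightarrow$ matroid algebra'' I would verify condition~(1) of Lemma~\ref{L:506}. Given $X\subseteq M\times\Omega$ and $u,v\in M\times\Omega$ with $u\in\seq{X\cup\set{v}}$ and $u\notin\seq{X}$, the decomposition $\seq{X\cup\set{v}}=\seq{X}\cup\seq{v}$ forces $u\in\seq{v}=M\beta\times\set{p}$, where $v=(\beta,p)$; writing $u=(t\beta,p)$ and using invertibility of $t$, one gets $v=t^{-1}\cdot u\in\seq{X\cup\set{u}}$, as required.

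For the converse I would argue that if $M$ is not a group then it has an element $w$ with no left inverse --- otherwise the usual argument ($ba=1$ and $cb=1$ give $c=c(ba)=(cb)a=a$, hence $ab=1$) makes every element two-sided invertible. Fixing such a $w$ and any $p\in\Omega$, I would set $X=\set{(w,p)}$ and $u=(1,p)$: then $X$ is C-independent (singletons always are), and $u\notin\seq{X}=Mw\times\set{p}$ exactly because $1\notin Mw$, i.e. because $w$ has no left inverse; yet $X\cup\set{u}$ has $p$-slice $\set{1,w}$, which is not a $\utrl$-antichain since $1\utrl w$ and $1\neq w$, so $X\cup\set{u}$ is not C-independent. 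This contradicts condition~(2), so $\rF_M(\Omega)$ is not a matroid algebra. No step here looks genuinely hard; the only care needed is the bookkeeping with the closure operator on $M\times\Omega$, and that is already available from the proof of Theorem~\ref{T:FM(Omega)Srk}.
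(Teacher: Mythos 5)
Your proof is correct and follows essentially the same route as the paper: the forward direction verifies condition~(1) of Lemma~\ref{L:506} exactly as the paper does (the paper leaves this step as ``straightforward''), and your converse, though phrased contrapositively via condition~(2) and a non-left-invertible element $w$, is just the contrapositive of the paper's direct argument that the matroid condition applied to $u\cdot p\in\seq{1\cdot p}\setminus\seq{\es}$ forces every $u\in M$ to be left invertible. Both hinge on the same observation, namely that $1\cdot p\in\seq{u\cdot p}$ if{f} $u$ is left invertible in~$M$.
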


\begin{proof}
If~$M$ is a group, then it is straightforward to verify that $\rF_M(\Omega)$ satisfies Condition~(1) of Lemma~\ref{L:506}, so it is a matroid algebra.

Conversely, suppose that $\rF_M(\Omega)$ is a matroid algebra. Let $u\in M$ and pick $p\in\Omega$. {}From $u\cdot p\in\seq{1\cdot p}\setminus\seq{\es}$ and the matroid condition it follows that $1\cdot p\in\seq{u\cdot p}$, that is, $u$ is left invertible in~$M$. As this holds for all $u\in M$, $M$ is a group.
\end{proof}

The following result gives us a wide range of MC-algebras that are usually not SC-ranked. Denote by~$X^*$ the free monoid on~$X$, for any set~$X$.

\begin{proposition}\label{P:MCnotSrk}
Let~$\Omega$ and~$X$ be sets, with~$\Omega$ nonempty. Then~$\rF_{X^*}(\Omega)$ is both an M-algebra and an MC-algebra.
\end{proposition}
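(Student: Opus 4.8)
The plan is to analyze the free $X^*$-act $\rF_{X^*}(\Omega)$, which we may identify with $X^*\times\Omega$, and exhibit an explicit M-independent (hence, by the earlier discussion, C-independent as well) generating set, together with a proof that M-independence and C-independence coincide for this algebra.

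First I would recall the concrete description: the elements of $\rF_{X^*}(\Omega)$ are pairs $(w,p)$ with $w\in X^*$ a word and $p\in\Omega$, the unary operation indexed by $v\in X^*$ acts by $v\cdot(w,p)=(vw,p)$, and $\Omega$ is identified with the set of pairs $(\es,p)$ (empty word). Since $\rF_{X^*}(\Omega)$ is free on $\Omega$ in the variety of $X^*$-acts, $\Omega$ is automatically an M-basis, so $\rF_{X^*}(\Omega)$ is an M-algebra; this is essentially immediate from Definition~\ref{D:Tbasis} and the remark that every free algebra is an M-algebra. The real content is the MC-part: I must show that every C-independent subset of $\rF_{X^*}(\Omega)$ is M-independent. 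Using the criterion invoked in the proof of Theorem~\ref{T:FM(Omega)Srk}, a subset $Y$ is C-independent iff for each $p\in\Omega$ the set $Y\cdot p^{-1}=\setm{w\in X^*}{(w,p)\in Y}$ is a $\utrl$-antichain in $X^*$; here $w\utrl w'$ means $w$ is a suffix of $w'$ (i.e. $w'=vw$ for some $v$), because of the convention $v\cdot(w,p)=(vw,p)$.

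The key step is then to verify that any such $Y$ is M-independent, i.e. that every map $\psi\colon Y\to\rF_{X^*}(\Omega)$ extends to a homomorphism $\seq{Y}\to\rF_{X^*}(\Omega)$. The subuniverse $\seq{Y}$ consists of all elements $v\cdot y$ with $v\in X^*$ and $y\in Y$; the only obstruction to a well-defined extension is a \emph{collision} $v_1\cdot y_1=v_2\cdot y_2$ with $y_1\neq y_2$ or $v_1\neq v_2$. Writing $y_i=(w_i,p_i)$, such an equality forces $p_1=p_2=:p$ and $v_1w_1=v_2w_2$ in $X^*$; since words in a free monoid have unique factorizations, one of $w_1,w_2$ is a suffix of the other, so $w_1\utrl w_2$ or $w_2\utrl w_1$ inside $Y\cdot p^{-1}$, contradicting antichain-ness unless $w_1=w_2$ (whence $y_1=y_2$ and $v_1=v_2$). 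Thus there are no nontrivial collisions, so the assignment $v\cdot y\mapsto v\cdot\psi(y)$ is a well-defined endomorphism of $\seq{Y}$ into $\rF_{X^*}(\Omega)$ extending $\psi$; hence $Y$ is M-independent. Combined with Proposition~\ref{P:Degen}(iii) (M-independent implies S-independent implies, in the non-degenerate case, C-independent) this shows M-independence and C-independence coincide, so $\rF_{X^*}(\Omega)$ is an MC-algebra. I would also dispatch the degenerate subtlety: if $X=\es$ then $X^*$ is trivial and $\rF_{X^*}(\Omega)\cong\Omega$ (a set with no operations), which is trivially an MC-algebra; if $\Omega$ is a singleton the argument above still applies verbatim.

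The main obstacle I anticipate is purely bookkeeping: pinning down the direction of the preorder $\utrl$ under the chosen multiplication convention (suffixes versus prefixes) and making the "unique factorization in a free monoid" cancellation argument airtight when the words have different lengths — one must be careful that $v_1w_1=v_2w_2$ with, say, $|v_1|\le|v_2|$ yields $w_1=uw_2$ for the appropriate $u$, giving exactly the suffix relation needed. Once that cancellation lemma is stated cleanly, the rest is a short verification. No deep idea is required beyond recognizing that freeness of the monoid $X^*$ is precisely what forces C-independent sets to be "collision-free", which is the defining feature making C-independence as strong as M-independence here.
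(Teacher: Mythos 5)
Your proposal is correct and follows essentially the same route as the paper: $\Omega$ is an M-basis since the act is free, and a C-independent set $Y$ (characterized by each $Y\cdot p^{-1}$ being a $\utrl$-antichain) is M-independent because equidivisibility/cancellation in the free monoid $X^*$ rules out nontrivial collisions $v_1\cdot y_1=v_2\cdot y_2$, so the extension $v\cdot y\mapsto v\cdot\psi(y)$ is well defined. Your extra remarks on the degenerate cases and on the converse direction via Proposition~\ref{P:Degen} are harmless additions; the argument matches the paper's.
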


\begin{proof}
As~$\Omega$ is an M-basis of~$\rF_{X^*}(\Omega)$, the latter is an M-algebra.

Now let~$Y$ be a C-independent subset of~$\rF_{X^*}(\Omega)$. This means that $Y\cdot p^{-1}$ is a $\utrl$-antichain of~$X^*$ for each $p\in\Omega$. Now let $f\colon Y\to\rF_{X^*}(\Omega)$ be any mapping. Consider pairs $(t_0,y_0)$ and $(t_1,y_1)$ of $X^*\times Y$ such that $t_0y_0=t_1y_1$. This means that there are $p\in\Omega$ and $u_0,u_1\in X^*$ such that $y_0=u_0\cdot p$, $y_1=u_1\cdot p$, and $t_0u_0=t_1u_1$. As~$X^*$ is the free monoid on~$X$, either $t_1\utrr t_0$ or $t_0\utrr t_1$; suppose, for example, that the first case holds, so $t_0=t_1w$ for some~$w\in X^*$. {}From $t_1wu_0=t_0u_0=t_1u_1$ it follows that $wu_0=u_1$, thus $u_0\utrl u_1$, hence, as $Y\cdot p^{-1}$ is a $\utrl$-antichain, $u_0=u_1$, and hence $y_0=y_1$ and $t_0=t_1$. Therefore, there exists a unique map $\ol{f}\colon\seq{Y}\to\rF_{X^*}(\Omega)$ such that $\ol{f}(t\cdot y)=t\cdot f(y)$ for each $(t,y)\in X^*\times Y$. Clearly, $\ol{f}$ is a morphism, and so $\rF_{X^*}(\Omega)$ is an MC-algebra.
\end{proof}

Observe that $X$ is a $\utrl$-antichain of~$X^*$. Hence, by Theorem~\ref{T:FM(Omega)Srk}, if $\card X>\card\Omega$, then $\rF_{X^*}(\Omega)$ is not SC-ranked, although, by Proposition~\ref{P:MCnotSrk}, it is both an M-algebra and an MC-algebra.

As a particular case of Corollary~\ref{C:NoEmbSCRM}, we obtain

\begin{corollary}\label{C:NoEmbGsets}
Let $\Omega$ be an infinite set and let~$G$ be a group. Then $\End\rF_G(\Omega)$ has no dual embedding.
\end{corollary}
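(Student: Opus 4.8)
The plan is to derive this directly from Corollary~\ref{C:NoEmbSCRM}, whose hypothesis is that every $\utrl$-antichain of the monoid has at most $\card\Omega$ elements. So the only thing to check is that a group~$G$ satisfies this antichain condition for every infinite~$\Omega$.

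First I would observe that in a group, the preordering~$\utrl$ is trivial: for any $u,v\in G$ we have $v=(vu^{-1})u$, so $u\utrl v$ (and symmetrically $v\utrl u$). Hence any two elements of~$G$ are $\utrl$-comparable, which means that every $\utrl$-antichain of~$G$ has at most one element. Since~$\Omega$ is infinite, $\card\Omega\ges1$, and therefore every $\utrl$-antichain of~$G$ has at most~$\card\Omega$ elements. This is exactly the hypothesis of Corollary~\ref{C:NoEmbSCRM}, so we conclude that $\End\rF_G(\Omega)$ has no dual embedding.

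Alternatively, and perhaps more transparently, one can route through Theorem~\ref{T:FM(Omega)Srk} and Corollary~\ref{C:RkA=RkBindep}: a group is left uniserial (as already noted in the discussion preceding Theorem~\ref{T:FM(Omega)Srk}, since~$\utrl$ is total—indeed trivial—on~$G$), so by Theorem~\ref{T:FM(Omega)Srk} the free $G$-act $\rF_G(\Omega)$ is SC-ranked, with $\Rank\rF_G(\Omega)=\card\Omega$ infinite. Applying Corollary~\ref{C:RkA=RkBindep} with $A=B=\rF_G(\Omega)$ then yields that $\End\rF_G(\Omega)$ has no dual embedding.

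There is no real obstacle here: both routes are one-line verifications once the earlier machinery is in place. The only point requiring any care is the bookkeeping that $\Omega$ being infinite forces $\card\Omega\ges1$, so that the degenerate "at most one element" antichain bound for groups indeed falls under the hypothesis of Corollary~\ref{C:NoEmbSCRM}; and, for the second route, recalling that left uniseriality of a group is what was already asserted (so that Theorem~\ref{T:FM(Omega)Srk} applies in the infinite case).
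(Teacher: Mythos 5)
Your proof is correct and follows exactly the paper's route: the paper presents this corollary as a particular case of Corollary~\ref{C:NoEmbSCRM}, justified by the observation (made just before Theorem~\ref{T:FM(Omega)Srk}) that in a group the preordering~$\utrl$ is total, so every $\utrl$-antichain is a singleton or empty. Your alternative route via Theorem~\ref{T:FM(Omega)Srk} and Corollary~\ref{C:RkA=RkBindep} is the same machinery unwound one step, since Corollary~\ref{C:NoEmbSCRM} is itself deduced from those two results.
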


Corollary~\ref{C:NoEmbGsets} does not extend to \emph{$M$-acts} (for a monoid~$M$), see Theorem~\ref{T:AlmostDual}.

\section{SC-ranked free modules and $\kappa$-n\oe therianity}\label{S:SrkedMod}

In this section, all modules will be left modules over (unital, associative) rings.

\begin{definition}\label{D:gknoeth}
Let~$\kappa$ be a regular cardinal. A module~$M$ is \emph{$\kappa$-n\oe therian}, if every increasing $\kappa$-sequence of submodules of~$M$ is eventually constant.
\end{definition}

In particular, $M$ is n\oe therian if{f} it is $\aleph_0$-n\oe therian. For a regular cardinal~$\kappa$, $M$ is $\kappa$-n\oe therian if{f} there is no strictly increasing $\kappa$-sequence of submodules of~$M$. Hence, if $\kappa<\lambda$ are regular cardinals and~$M$ is $\kappa$-n\oe therian, then~$M$ is also~$\lambda$-n\oe therian.

C-independent subsets and $\kappa$-n\oe therian modules are related as follows.

\begin{lemma}\label{L:noeth2ind}
Let~$\kappa$ be a regular cardinal. If a module~$M$ is $\kappa$-n\oe therian, then every C-independent subset of~$M$ has cardinality smaller than~$\kappa$.
\end{lemma}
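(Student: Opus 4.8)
The plan is to argue by contraposition: suppose $M$ has a C-independent subset $X$ with $\card X \ges \kappa$, and produce a strictly increasing $\kappa$-sequence of submodules, contradicting $\kappa$-noetherianity. First I would fix an injection, so as to assume without loss of generality that $\kappa \subseteq X$, i.e.\ that the elements $(x_\alpha \mid \alpha < \kappa)$ of a $\kappa$-indexed subfamily of $X$ are pairwise distinct and the whole family is C-independent (any subfamily of a C-independent set is again C-independent, since $\seq{\,\cdot\,}$ is monotone).

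The key construction is then to set $M_\alpha = \seq{x_\beta \mid \beta < \alpha}$, the submodule generated by the first $\alpha$ of these elements, for each $\alpha < \kappa$. This is visibly an increasing $\kappa$-sequence of submodules of $M$. The point is that it is \emph{strictly} increasing: for each $\alpha < \kappa$ we have $x_\alpha \in M_{\alpha+1}$, while $x_\alpha \notin M_\alpha$ because $x_\alpha \notin \seq{\{x_\beta \mid \beta < \kappa\} \setminus \set{x_\alpha}} \supseteq \seq{x_\beta \mid \beta < \alpha} = M_\alpha$ — here the first non-membership is exactly C-independence of the family $(x_\beta \mid \beta < \kappa)$ applied to the index $\alpha$. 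Hence $M_\alpha \subsetneq M_{\alpha+1}$ for all $\alpha$, so the sequence $(M_\alpha \mid \alpha < \kappa)$ is not eventually constant, contradicting the hypothesis that $M$ is $\kappa$-noetherian. Therefore every C-independent subset has cardinality $< \kappa$.

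There is essentially no hard part here; the only thing to be a little careful about is the role of regularity of $\kappa$. In fact the argument above does not even seem to need regularity — it just needs that $X$ has a well-ordered subfamily of order type $\kappa$, and the failure of "eventually constant" follows because the sequence is strictly increasing at every successor stage. Regularity of $\kappa$ is already built into the \emph{definition} of $\kappa$-noetherian module (Definition~\ref{D:gknoeth} is stated only for regular $\kappa$), so there is nothing further to check; one simply invokes that definition. The main (and only) conceptual step is recognizing that a C-independent family of size $\ges\kappa$ gives, essentially tautologically, a strictly increasing $\kappa$-chain of generated submodules.
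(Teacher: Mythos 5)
Your proof is correct and is essentially the same as the paper's: both take a $\kappa$-indexed one-to-one C-independent family $(x_\xi\mid\xi<\kappa)$ and observe that the submodules $X_\alpha=\seq{x_\xi\mid\xi<\alpha}$ form a strictly increasing $\kappa$-sequence, contradicting $\kappa$-n\oe therianity. Your side remark about regularity is also accurate — it is only used via the definition, not in the argument itself.
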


\begin{proof}
Suppose that there exists a C-independent subset $\setm{x_\xi}{\xi<\kappa}$ of~$M$, where $\xi\mapsto x_\xi$ is one-to-one. The family $\famm{X_\alpha}{\alpha<\kappa}$, where~$X_\alpha$ is the submodule generated by $\setm{x_\xi}{\xi<\alpha}$, is a strictly increasing $\kappa$-sequence of submodules of~$M$, a contradiction.
\end{proof}

\begin{lemma}\label{L:FinSumNoeth}
Let $\kappa$ be a regular cardinal and let~$M$ be a module. Then any finite sum of $\kappa$-n\oe therian submodules of~$M$ is $\kappa$-n\oe therian.
\end{lemma}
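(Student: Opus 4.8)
The plan is to reduce the general case to the two-summand case and argue by induction on the number of summands. So suppose $M = M_1 + \dots + M_n$ with each $M_i$ a $\kappa$-n\oe therian submodule of $M$. If I can show that the sum of two $\kappa$-n\oe therian submodules is $\kappa$-n\oe therian, then, writing $M = (M_1 + \dots + M_{n-1}) + M_n$ and invoking the induction hypothesis on $M_1 + \dots + M_{n-1}$, the result follows at once; the base case $n=1$ is trivial. Hence everything comes down to proving: if $P$ and $Q$ are $\kappa$-n\oe therian submodules of $M$, then $P+Q$ is $\kappa$-n\oe therian.

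For the two-summand case, the key observation is that there is a short exact sequence relating $P+Q$, the submodule $P$, and the quotient $(P+Q)/P$, and that the latter is isomorphic to $Q/(P\cap Q)$, a quotient of $Q$. So I would first record the auxiliary fact that $\kappa$-n\oe therianity is inherited by submodules and by quotients (for submodules it is immediate since an increasing $\kappa$-sequence of submodules of a submodule is in particular such a sequence in $M$; for quotients one pulls back the chain along the projection). In particular $Q/(P\cap Q)$, being a quotient of the $\kappa$-n\oe therian module $Q$, is $\kappa$-n\oe therian, and so is $P$ by hypothesis.

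It then suffices to prove the general principle that if $N$ is a submodule of a module $L$ such that both $N$ and $L/N$ are $\kappa$-n\oe therian, then $L$ is $\kappa$-n\oe therian; applying this with $L = P+Q$ and $N = P$ finishes the proof. To prove this principle, take a strictly increasing $\kappa$-sequence $(L_\alpha \mid \alpha < \kappa)$ of submodules of $L$ and derive a contradiction. Consider the two induced $\kappa$-sequences $(L_\alpha \cap N \mid \alpha < \kappa)$ in $N$ and $((L_\alpha + N)/N \mid \alpha < \kappa)$ in $L/N$; both are increasing, hence each is eventually constant because $N$ and $L/N$ are $\kappa$-n\oe therian and $\kappa$ is regular (so we can find a single $\alpha_0 < \kappa$ past which both are constant, using that the intersection of two club-like ``eventually constant from'' thresholds is still below $\kappa$). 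But a standard modular-law argument shows that if $L_\alpha \subseteq L_\beta$ with $L_\alpha \cap N = L_\beta \cap N$ and $L_\alpha + N = L_\beta + N$, then $L_\alpha = L_\beta$: indeed for $x \in L_\beta$, write $x = y + z$ with $y \in L_\alpha$, $z \in N$ (possible since $L_\beta \subseteq L_\alpha + N$), whence $z = x - y \in L_\beta \cap N = L_\alpha \cap N \subseteq L_\alpha$, so $x \in L_\alpha$. This contradicts strict increase past $\alpha_0$.

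I expect the only mildly delicate point to be the regularity bookkeeping: making sure that ``eventually constant'' for each of the two derived chains can be combined into a single threshold $\alpha_0 < \kappa$, which is exactly where the regularity of $\kappa$ in Definition~\ref{D:gknoeth} is used (two eventual-constancy thresholds $\alpha_1, \alpha_2 < \kappa$ have $\max(\alpha_1,\alpha_2) < \kappa$). Everything else — the submodule/quotient inheritance and the modular-law identification of $L_\alpha$ with $L_\beta$ — is routine. An alternative, slightly slicker packaging would be to observe directly that the map $L_\alpha \mapsto (L_\alpha \cap N,\ (L_\alpha+N)/N)$ is a strictly increasing map from the chain into the product of the two submodule lattices, but the elementary argument above avoids even that and keeps the proof self-contained.
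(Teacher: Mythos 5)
Your proof is correct and is exactly the argument the paper has in mind: the paper simply cites the classical case $\kappa=\aleph_0$ (the Corollary in Lang, Section~VI.1) and notes that the same proof works, and that classical proof is precisely your reduction to two summands plus the short-exact-sequence/modular-law argument. Your extra care about combining the two eventual-constancy thresholds via regularity of $\kappa$ is the only point where the general case differs from $\aleph_0$, and you handle it correctly.
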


\begin{proof}
As the proof of the (classical) result that the sum of two n\oe therian modules is n\oe therian (i.e., the case where $\kappa=\aleph_0$), see, for example, the Corollary in \cite[Section~VI.1]{Lang}.
\end{proof}

\begin{lemma}\label{L:InfSumNoeth}
Let~$\kappa$ be a regular cardinal, let $M$ be a module, and let $\famm{M_i}{i\in I}$ be a family of $\kappa$-n\oe therian submodules of~$M$ such that $\card I<\kappa$. Then the sum $\sum_{i\in I}M_i$ is $\kappa$-n\oe therian.
\end{lemma}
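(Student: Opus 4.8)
The plan is to reduce the infinite-index case to the finite-index case (Lemma~\ref{L:FinSumNoeth}) by exploiting the regularity of~$\kappa$. Write $N=\sum_{i\in I}M_i$; by Lemma~\ref{L:FinSumNoeth}, each finite partial sum $M_J=\sum_{i\in J}M_i$ (for $J\in\Fin{I}$) is $\kappa$-n\oe therian, and these $M_J$ form a directed family (under inclusion) whose union is~$N$. So it suffices to show that a directed union of fewer than~$\kappa$ many $\kappa$-n\oe therian submodules is again $\kappa$-n\oe therian; since $\card\bigl(\Fin{I}\bigr)=\card I<\kappa$ when $I$ is infinite (and the statement is trivial when $I$ is finite, by Lemma~\ref{L:FinSumNoeth} directly), this will finish the proof.

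First I would take a strictly increasing $\kappa$-sequence $\famm{N_\alpha}{\alpha<\kappa}$ of submodules of~$N$, aiming for a contradiction. For each $\alpha<\kappa$ with $\alpha+1<\kappa$, pick an element $a_\alpha\in N_{\alpha+1}\setminus N_\alpha$. Since $N=\bigcup_{J\in\Fin{I}}M_J$ and the $M_J$ are directed, each $a_\alpha$ lies in some $M_{J(\alpha)}$ with $J(\alpha)\in\Fin{I}$. Now the key step: the map $\alpha\mapsto J(\alpha)$ sends a set of size~$\kappa$ into a set of size~$<\kappa$, so by regularity of~$\kappa$ there is a single $J^*\in\Fin{I}$ such that $S=\setm{\alpha<\kappa}{J(\alpha)=J^*}$ has cardinality~$\kappa$ (a set of size~$<\kappa$ cannot be covered by~$<\kappa$ many fibers each of size~$<\kappa$). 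Enumerate $S=\setm{\alpha_\xi}{\xi<\kappa}$ in increasing order.

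Then consider the submodules $P_\xi=M_{J^*}\cap N_{\alpha_\xi}$ of~$M_{J^*}$. For $\xi<\eta$ we have $\alpha_\xi<\alpha_\eta$, hence $\alpha_\xi+1\les\alpha_\eta$, so $a_{\alpha_\xi}\in N_{\alpha_\xi+1}\subseteq N_{\alpha_\eta}$ and also $a_{\alpha_\xi}\in M_{J(\alpha_\xi)}=M_{J^*}$, whence $a_{\alpha_\xi}\in P_\eta$; on the other hand $a_{\alpha_\xi}\notin N_{\alpha_\xi}\supseteq P_\xi$, so $a_{\alpha_\xi}\notin P_\xi$. Thus $\famm{P_\xi}{\xi<\kappa}$ is a strictly increasing $\kappa$-sequence of submodules of the $\kappa$-n\oe therian module~$M_{J^*}$, contradicting $\kappa$-n\oe therianity. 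Therefore no such sequence $\famm{N_\alpha}{\alpha<\kappa}$ exists, and $N$ is $\kappa$-n\oe therian.

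The main obstacle is the pigeonhole step invoking regularity of~$\kappa$: one must be careful that a $\kappa$-indexed family cannot be partitioned into $<\kappa$ pieces all of size $<\kappa$, which is precisely the definition of regularity, and that the index set $\Fin{I}$ really does have cardinality $<\kappa$ — this uses $\card I<\kappa$ together with $\kappa$ being infinite (so $\card\Fin{I}=\card I$ when $I$ is infinite). A minor point to handle cleanly is the degenerate case $\kappa=\aleph_0$ combined with $I$ finite, but that is subsumed by Lemma~\ref{L:FinSumNoeth}. Everything else is routine submodule bookkeeping.
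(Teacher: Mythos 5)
Your proof is correct and takes essentially the same route as the paper: both reduce to the finite partial sums $M_J$ \textup{(}$J\in\Fin{I}$\textup{)} via Lemma~\ref{L:FinSumNoeth} and then exploit the regularity of~$\kappa$ together with $\card\bigl(\Fin{I}\bigr)<\kappa$. The only difference is cosmetic: the paper argues directly, stabilizing each trace $X_\xi\cap M_J$ at some $\alpha_J<\kappa$ and taking the supremum of the~$\alpha_J$ \textup{(}which is below~$\kappa$ by regularity\textup{)}, whereas you argue by contradiction with a pigeonhole on the witnesses~$a_\alpha$ --- two interchangeable ways of invoking regularity.
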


\begin{proof}
We put $M_J=\sum_{i\in J}M_i$, for each $J\subseteq I$.
Let $\famm{X_\xi}{\xi<\kappa}$ be an increasing $\kappa$-sequence of submodules of~$M_I$.
For every $J\in\Fin{I}$, it follows from Lemma~\ref{L:FinSumNoeth} that there exists $\alpha_J<\kappa$ such that $X_\xi\cap M_J=X_{\alpha_J}\cap M_J$ for each $\xi\ges\alpha_J$. As~$\kappa$ is regular and greater than $\card\bigl(\Fin{I}\bigr)$, the supremum~$\alpha=\bigvee\famm{\alpha_J}{J\in\Fin{I}}$ is smaller than~$\kappa$. Observe that $X_\xi=X_\alpha$ for each $\xi\ges\alpha$.
\end{proof}

We shall use the standard convention to denote by $\RR$ the ring~$R$ viewed as a left module over itself, for any ring~$R$. For a regular cardinal~$\kappa$, we say that~$R$ is \emph{left $\kappa$-n\oe therian}, if the module~$\RR$ is $\kappa$-n\oe therian.

For a module~$M$ and a set $\Omega$, we denote by~$M^{(\Omega)}$ the module of all families $\famm{x_p}{p\in\Omega}\in M^\Omega$ such that $\setm{p\in\Omega}{x_p\neq0}$ is finite. In particular, $\RR^{(\Omega)}$ is the free left $R$-module on~$\Omega$.

We denote by $\kappa^+$ the successor cardinal of a cardinal~$\kappa$.

\begin{proposition}\label{P:SrankedMod}
Let $\Omega$ be an infinite set and let $R$ be a left $(\card\Omega)^+$-n\oe therian ring. Then the free module $\RR^{(\Omega)}$ is SC-ranked.
\end{proposition}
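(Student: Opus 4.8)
The plan is to use the canonical basis of $\RR^{(\Omega)}$ as the witnessing S-basis and to bound the size of C-independent subsets by invoking the $(\card\Omega)^+$-n\oe therianity hypothesis through Lemmas~\ref{L:InfSumNoeth} and~\ref{L:noeth2ind}. First I would observe that the family $\famm{e_p}{p\in\Omega}$ of canonical generators of $F=\RR^{(\Omega)}$ is an M-basis of $F$, hence (by Proposition~\ref{P:Degen}) an S-basis, and that it has cardinality $\card\Omega$. So, to conclude that $F$ is SC-ranked with $\Rank F=\card\Omega$, it suffices to show that every C-independent subset of $F$ has cardinality at most $\card\Omega$.

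To prove that bound, set $\kappa=(\card\Omega)^+$. Being a successor cardinal, $\kappa$ is regular, so Definition~\ref{D:gknoeth} and the lemmas of this section apply. By hypothesis the left module $\RR$ is $\kappa$-n\oe therian. Now $F=\RR^{(\Omega)}$ is the sum of the family $\famm{Re_p}{p\in\Omega}$ of submodules, each isomorphic to $\RR$ and hence $\kappa$-n\oe therian, and the index set $\Omega$ satisfies $\card\Omega<(\card\Omega)^+=\kappa$. Therefore Lemma~\ref{L:InfSumNoeth} applies and shows that $F$ itself is $\kappa$-n\oe therian. Finally, by Lemma~\ref{L:noeth2ind}, every C-independent subset of a $\kappa$-n\oe therian module has cardinality strictly less than $\kappa=(\card\Omega)^+$, that is, at most $\card\Omega$, which is exactly what we needed.

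There is no serious obstacle here: the statement falls out by assembling the two lemmas of the section with the hypothesis, the only point requiring a moment's care being that $(\card\Omega)^+$ is regular (so that Lemma~\ref{L:InfSumNoeth} can be applied with index set $\Omega$) and that the canonical generating set is genuinely an S-basis. One could also remark afterwards that $\Rank\RR^{(\Omega)}=\card\Omega$ in this situation, since the canonical basis is a C-basis and all C-independent subsets have been shown to be of size at most $\card\Omega$.
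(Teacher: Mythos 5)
Your proof is correct and takes essentially the same route as the paper, which establishes this statement via the slightly more general Theorem~\ref{T:Srkmodule}: take the canonical generators as an S-basis, apply Lemma~\ref{L:InfSumNoeth} to conclude that $\RR^{(\Omega)}$ is $(\card\Omega)^+$-n\oe therian, and then apply Lemma~\ref{L:noeth2ind} to bound the C-independent subsets by $\card\Omega$.
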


This makes it possible to produce many SC-ranked modules.

\begin{theorem}\label{T:Srkmodule}
Let $\kappa$ be an infinite cardinal and let~$R$ be a left $\kappa^+$-n\oe therian ring. Then the free left module $\RR^{(\Omega)}$ is SC-ranked, for every set~$\Omega$ such that $\card\Omega\ges\kappa$.
\end{theorem}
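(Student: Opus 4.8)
The plan is to deduce Theorem~\ref{T:Srkmodule} from Proposition~\ref{P:SrankedMod} by a monotonicity argument on the n\oe therianity hypothesis; no genuinely new module theory is needed once that proposition is available. First observe that $\Omega$ is infinite, since $\card\Omega\ges\kappa\ges\aleph_0$, so Proposition~\ref{P:SrankedMod} is applicable to~$\Omega$ as soon as we verify that $R$ is left $(\card\Omega)^+$-n\oe therian. From $\kappa\les\card\Omega$ we get $\kappa^+\les(\card\Omega)^+$, and both $\kappa^+$ and $(\card\Omega)^+$ are regular cardinals, being successor cardinals. If $\kappa^+=(\card\Omega)^+$ there is nothing to prove; otherwise $\kappa^+<(\card\Omega)^+$, and the monotonicity of $\lambda$-n\oe therianity in~$\lambda$ recorded just after Definition~\ref{D:gknoeth} (a $\mu$-n\oe therian module is $\lambda$-n\oe therian whenever $\mu<\lambda$ are regular) shows that $\RR$ is $(\card\Omega)^+$-n\oe therian, i.e.\ $R$ is left $(\card\Omega)^+$-n\oe therian. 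Proposition~\ref{P:SrankedMod} then gives that the free module $\RR^{(\Omega)}$ is SC-ranked, which is the assertion.

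I do not expect any real obstacle here: the only room for error is conflating $\kappa^+$ with $(\card\Omega)^+$, or overlooking that the monotonicity of n\oe therianity is stated for regular cardinals (which is automatic, since we only deal with successors). The substantive work lies one level down, in Proposition~\ref{P:SrankedMod}, which I would prove as follows: the canonical copy of~$\Omega$ in $\RR^{(\Omega)}$ is an M-basis, hence an S-basis, of cardinality $\card\Omega$; the module $\RR^{(\Omega)}$ is the sum of the $\card\Omega$ submodules $\RR e_p$ $(p\in\Omega)$, each isomorphic to the $(\card\Omega)^+$-n\oe therian module $\RR$, hence is itself $(\card\Omega)^+$-n\oe therian by Lemma~\ref{L:InfSumNoeth} (using $\card\Omega<(\card\Omega)^+$); and then Lemma~\ref{L:noeth2ind} forces every C-independent subset of $\RR^{(\Omega)}$ to have cardinality strictly below $(\card\Omega)^+$, that is, at most $\card\Omega$. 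This is exactly the condition in Definition~\ref{D:Sranked} witnessed by the S-basis~$\Omega$, so $\RR^{(\Omega)}$ is SC-ranked.
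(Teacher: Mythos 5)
Your proof is correct and essentially coincides with the paper's: the paper proves Theorem~\ref{T:Srkmodule} directly by exactly the three steps you use for Proposition~\ref{P:SrankedMod} ($\Omega$ is an S-basis, Lemma~\ref{L:InfSumNoeth} makes $\RR^{(\Omega)}$ a $(\card\Omega)^+$-n\oe therian module, and Lemma~\ref{L:noeth2ind} bounds the C-independent subsets), with the monotonicity of $\lambda$-n\oe therianity in~$\lambda$ absorbed silently. Routing the argument through Proposition~\ref{P:SrankedMod} is only an organizational difference, not a mathematical one.
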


\begin{proof}
Put $\lambda=\card\Omega$. Of course, $\Omega$ is an S-basis of $\RR^{(\Omega)}$. As, by Lemma~\ref{L:InfSumNoeth}, $\RR^{(\Omega)}$ is a $\lambda^+$-n\oe therian left module, it follows from Lemma~\ref{L:noeth2ind} that every C-independent subset of~$\RR^{(\Omega)}$ has cardinality at most~$\lambda$.
\end{proof}

By using Corollary~\ref{C:RkA=RkBindep}, we obtain the following result.

\begin{corollary}\label{C:Srkmodule}
Let $R$ be a left $\aleph_1$-n\oe therian ring. Then the free module $\RR^{(\Omega)}$ is SC-ranked, for every infinite set~$\Omega$. Consequently, the semigroup $\End\bigl(\RR^{(\Omega)}\bigr)$ has no dual embedding.
\end{corollary}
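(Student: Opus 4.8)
The plan is to deduce Corollary~\ref{C:Srkmodule} from Theorem~\ref{T:Srkmodule} together with Corollary~\ref{C:RkA=RkBindep}. First I would observe that $\aleph_1=\aleph_0^+$, so a left $\aleph_1$-n\oe therian ring is exactly a left $\kappa^+$-n\oe therian ring for $\kappa=\aleph_0$. Hence Theorem~\ref{T:Srkmodule}, applied with this~$\kappa$, tells us that $\RR^{(\Omega)}$ is SC-ranked for every set~$\Omega$ with $\card\Omega\ges\aleph_0$, i.e., for every infinite set~$\Omega$. This already gives the first assertion of the corollary.

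For the second assertion, fix an infinite set~$\Omega$ and set $A=B=\RR^{(\Omega)}$. By the first part, $A$ is SC-ranked, and since $\Omega$ is an S-basis of~$A$ we have $\Rank A=\card\Omega\ges\aleph_0$; in particular $\Rank A\ges\Rank B\ges\aleph_0$. Corollary~\ref{C:RkA=RkBindep} then applies verbatim to the pair $(A,B)=\bigl(\RR^{(\Omega)},\RR^{(\Omega)}\bigr)$ and yields that there is no semigroup embedding from $\End\bigl(\RR^{(\Omega)}\bigr)$ into $\bigl(\End\bigl(\RR^{(\Omega)}\bigr)\bigr)^\op$, which is precisely the statement that $\End\bigl(\RR^{(\Omega)}\bigr)$ has no dual embedding.

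There is essentially no obstacle here: the corollary is a pure specialization of the two quoted results, the only point worth stating explicitly being the arithmetic identity $\aleph_1=\aleph_0^+$ that matches the hypothesis of Theorem~\ref{T:Srkmodule} to the $\aleph_1$-n\oe therianity hypothesis. One could also note that the value of~$\Rank A$ is irrelevant beyond its being infinite, since Corollary~\ref{C:RkA=RkBindep} only requires $\Rank A\ges\Rank B\ges\aleph_0$, which is automatic when $A=B$ has infinite rank.
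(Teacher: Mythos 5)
Your proposal is correct and is exactly the argument the paper intends: the first assertion is Theorem~\ref{T:Srkmodule} specialized to $\kappa=\aleph_0$ (using $\aleph_1=\aleph_0^+$), and the second follows by applying Corollary~\ref{C:RkA=RkBindep} with $A=B=\RR^{(\Omega)}$, whose rank $\card\Omega$ is infinite. Nothing is missing.
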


In particular, Corollary~\ref{C:Srkmodule} applies to the case where the ring~$R$ is \emph{left n\oe therian}.

\section{Open problems}\label{S:Concl}

We observed in Remark~\ref{Rk:meet2+} that whenever~$V$ is an infinite-dimensional vector space over a division ring~$F$ such that $\card F\les\dim V$, there exists an embedding from $(\Sub V,\cap)$ into $(\Sub V,+)$. We do not know whether the cardinality restriction is necessary.

\begin{problem}\label{Pb:EmbVectSp}
Let~$V$ be an infinite-dimensional vector space over a division ring~$F$ such that $\dim V<\card F$. Does $(\Sub V,\cap)$ embed into $(\Sub V,+)$?
\end{problem}

In Theorem~\ref{T:AlmostDual}, we show that the endomorphism monoid of a free~$M$-act, for a monoid~$M$, may embed into its dual. We do not know if this can also happen for \emph{modules}:

\begin{problem}\label{Pb:EndModEmb}
Are there a unital ring~$R$ and a free left module~$F$ of infinite rank over~$R$ such that $\End F$ embeds into its dual?
\end{problem}

\begin{problem}\label{Pb:SelfDual}
Does there exist a nontrivial variety~$\mathcal{V}$ of algebras such that $\End\rF_{\mathcal{V}}(\omega)$ has a dual automorphism?
\end{problem}

By Theorem~\ref{T:popular}, the similarity type of any variety~$\mathcal{V}$ solving Problem~\ref{Pb:SelfDual} should have cardinality at least~$2^{\aleph_0}$. For a partial positive result, we refer to Theorem~\ref{T:AlmostDual}.

K. Urbanik introduces in~\cite{v_*} a subclass of the class of MC-algebras, called there \emph{$v_*$-algebras}. He also classifies these algebras in terms of modules and transformation semigroups.

Not every $v_*$-algebra has a C-basis. For example, denote by~$\ZZ_{(2)}$ the valuation ring of all rational numbers with odd denominator; then the field~$\QQ$ of all rational numbers, viewed as a $\ZZ_{(2)}$-module, is a $v_*$-algebra (cf. \cite[Section~3]{v_*}). However, for any nonzero rational numbers~$a$ and~$b$, either~$a/b$ or~$b/a$ belongs to~$\ZZ_{(2)}$, thus any C-independent subset of~$\QQ$ has at most one element. Since~$\QQ$ is not a finitely generated $\ZZ_{(2)}$-module, it has no C-basis.

\begin{problem}\label{Pb:v_*}
Let~$A$ be a $v_*$-algebra with an infinite S-basis. Can $\End A$ be embedded into its dual?
\end{problem}

By Corollary~\ref{C:RkA=RkBindep}, Problem~\ref{Pb:v_*} would have a negative answer if we could prove that every $v_*$-algebra with an infinite S-basis is also SC-ranked. However, we do not know this either.

\section*{Acknowledgment}
We thank the referee for several useful suggestions regarding the form of the paper, as well as for pointing to us the triviality of an earlier version of Problem~\ref{Pb:EmbVectSp} in the first version of the paper.

\end{document}